\documentclass{EngCLS}

\title{Dynamic Programming for Value-at-Risk Markov Decision Processes in Continuous Spaces\thanks{Author names are listed alphabetically.}}

\author{Xian Chen\thanks{School of Mathematical Sciences, Xiamen University (\texttt{chenxian@xmu.edu.cn})}
\quad Erick Delage\thanks{GERAD \& Department of Decision Sciences, HEC Montréal (\texttt{erick.delage@hec.ca})}
\quad Weikai Wang\thanks{Corresponding author; GERAD \& Department of Decision Sciences, HEC Montréal (\texttt{weikai.wang@hec.ca})}
}

\date{\today}

\begin{document}

\maketitle

\begin{abstract}
    Value-at-Risk (VaR) is a widely adopted quantile-based risk measure in practice, yet incorporating VaR objectives into Markov decision processes (VaR MDPs) remains a longstanding challenge.
    Recent work has derived dynamic programming equations for VaR MDPs in finite state and action spaces but left the case of more general spaces unresolved. This paper establishes, for the first time, the validity of these equations in continuous spaces, for both finite- and infinite-horizon settings. Such cases naturally arise in many real-world dynamic decision-making problems. As an application, we study an optimal execution problem with a VaR objective and demonstrate that our framework enables the characterization of optimal dynamic strategies, resolving an open question in the field.
\end{abstract}

\section{Introduction}

Value-at-Risk (VaR), as a widely used quantile-based risk measure, is one of the most common tools for risk assessment in practice \citep{deCastro2019econometrica, deCastro2022geb, HeXD2025mnsc}. According to a field experiment \citep{deCastro2022geb}, more than 40\% of examinees are quantile risk maximizers. VaR is also introduced as a standard for determining capital risk requirements in the Basel Accord \citep{BCBS2016}. Although VaR has been criticized for not being subadditive, thus failing to qualify as a convex risk measure \citep{Artzner2001mf}, and for neglecting the magnitude of extreme tail losses, it possesses several appealing properties. For instance, VaR is comonotonic additive, elicitable, and robust with respect to weak topology and the Wasserstein distance, giving it advantages in backtesting and robust statistical analysis compared to other coherent risk measures such as Conditional Value-at-Risk (CVaR) \citep{Emmer2015jor}.

The use of VaR as objective functions has a long-standing tradition in both control theory and finance. While numerous studies adopt continuous-time stochastic models for analytical tractability, these models often fall short in accurately representing real-world data. In practice, financial observations occur in discrete time, and discrete-time stochastic processes are typically the case of incomplete markets and are more suitable for numerical computation \citep{Follmer2016book}. Consequently, it is essential to develop VaR-based optimal control methods within the framework of discrete-time Markov decision processes (MDPs).

The study of VaR MDPs dates back to \cite{Filar1995tac}, which considered only the average-reward setting. The finite- and infinite-horizon discounted cases remained unresolved until \cite{LiXC2022opre}, where the problem was studied under the name of quantile MDPs. By exploiting a VaR decomposition result and augmenting the state space with the risk level, the authors derived a dynamic programming formulation in which the optimal risk level is updated recursively along the state trajectory. However, their quantile definition gave rise to several technical issues. These were subsequently addressed in \cite{Hau2023neurips}, which established a rigorous VaR decomposition theorem and derived a correct finite-horizon VaR dynamic programming formulation.
Building on this foundation, \cite{Hau2025aistats} further derives a nested-VaR formulation using the quantile representation and establishes its equivalence to the original VaR dynamic programming recursion. This reformulation also enables the development of an off-policy Q-learning algorithm. The nested-VaR framework is further exploited by \cite{LuoY2026icml} to facilitate CVaR gradient computation and improve the efficiency of CVaR policy optimization. From a different perspective, \cite{Xia2025arXiv} exploits the duality between VaR maximization and probabilistic minimization to develop a policy iteration algorithm that recursively solves the associated probabilistic minimization MDP and recovers an optimal policy for the VaR MDP.

However, existing studies on VaR MDPs are all restricted to finite state spaces, and thus do not provide a theoretical foundation for their application to problems, such as those in finance, where states or actions are naturally continuous.
Extending VaR dynamic programming to continuous spaces therefore remains an important open challenge, as highlighted in several studies \citep{JinYS2017qf, Hau2025aistats, HeXD2025mnsc}. This work aims to address this gap.
 
\subsection{Our Contributions}
We summarize our contributions as follows:
\begin{itemize}
    \item We establish a VaR decomposition theorem for random variables on Borel spaces,
    extending the results of \cite{LiXC2022opre, Hau2023neurips} from discrete outcome spaces to more general Borel spaces.
    
    \item Building on the VaR decomposition theorem, we develop dynamic programming formulations for finite- and infinite-horizon VaR MDPs on Borel state and action spaces by augmenting the state with the risk level. Extending the approaches of \cite{LiXC2022opre} and \cite{Hau2025aistats}, we define two sets of Bellman optimality equations, referred to as the VaR and nested-VaR dynamic programming formulations, prove their equivalence, and show that both recover the optimal values of the VaR MDP objectives. An optimal policy is then characterized through optimally updated risk levels.
    
    \item As an application, we study the continuous-state optimal execution problem under a VaR objective, extending the static formulation of \cite{Almgren2001jor} to dynamic trading strategies. We identify two regimes determined by the relative magnitudes of permanent and temporary market impact. In the first regime, the problem reduces to optimal stopping, and we derive an optimal dynamic strategy that strictly outperforms the optimal static strategy over an explicitly characterized range of risk levels. The policy admits a simple threshold implementation, eliminating stage-wise risk-level updates. In the second regime, we derive recursive equations for the optimal value and dynamic strategy. Numerical methods are proposed for computing the optimal strategies for both regimes, and experimental results demonstrate both the potential gains from dynamic execution and the applicability of the proposed VaR dynamic programming framework.
\end{itemize}

The remainder of the paper is organized as follows. Section~\ref{sec:Preliminaries} introduces the basic notation for MDPs on Borel spaces and formulates the VaR MDP problem. Section~\ref{sec:VaRDecomposition} establishes the VaR decomposition theorem and supporting technical results. Sections~\ref{sec:VaRDPFinite} and \ref{sec:VaRDPInfinite} develop the VaR dynamic programming framework for the finite- and infinite-horizon settings, respectively. Section~\ref{sec:VaRExecution} applies the framework to an optimal execution problem and presents numerical results. Finally, Section~\ref{sec:Conclusion} summarizes the paper and proposes further research.

\subsection{Related Literature}

Beyond recent work on dynamic programming for VaR in finite spaces \citep{LiXC2022opre, Hau2023neurips, Hau2025aistats, Xia2025arXiv}, our study is closely related to risk-sensitive MDPs with static risk measures, where the risk measure is applied directly to cumulative rewards. Since the seminal work of Howard and Matheson \cite{Howard1972mnsc}, this literature has focused primarily on the entropic risk measure (ERM) under the average-reward criterion; see \cite{Biswas2023arc, Bauerle2024mmor} for detailed reviews. For more general static risk measures, Bellman recursions are typically unavailable because of time inconsistency \citep{Rudloff2014ejor}, stemming from the absence of a tower property analogous to that of expectation and ERM. This difficulty can be addressed by augmenting the state with sufficient statistics of the history, an approach originated in MDPs with expected-utility objectives \citep{Kreps1977mor}, thereby restoring time consistency in the augmented MDP. This technique has been applied to MDPs with various risk objectives, such as CVaR \citep{Bauerle2011mmor}, certainty equivalent  \citep{Bauerle2014mor}, spectral risk measures \citep{Bauerle2021mmor}, and optimized certainty equivalent \cite{Bauerle2024mmor}.  
More recently, ERM and entropic VaR have been studied under finite- and infinite-horizon discounted criteria without state augmentation, exploiting its particular structural properties \citep{Hau2023aistats}. In contrast, VaR generally falls outside the risk measures that have been considered in these studies due to its non-convexity.

Another line of research addresses time inconsistency through recursive risk evaluation.
\cite{Ruszczynski2010mp} develops this framework for finite- and infinite-horizon discounted MDPs with dynamic coherent risk measures, with subsequent extensions to general convex risk measures \citep{Shen2013sicon}, unbounded costs \citep{Bauerle2022ejor}, and transient MDPs \citep{Cavus2014sicon}. \cite{Fan2022mp} provides a related foundation by deriving dynamic programming equations for process-based risk measures under stochastic conditional time consistency. Closely related to our work, \cite{Jiang2018mor} develops simulation-based approximate dynamic programming for dynamic quantile-based risk measures in finite-horizon MDPs, while \cite{deCastro2019econometrica} studies dynamic VaR over an infinite horizon and derives an Euler equation characterizing optimal decisions. These recursive risk criteria generally differ from applying a single static risk measure to cumulative rewards, which is the VaR objective considered in this work.

Our VaR decomposition theorem complements existing risk decomposition results. \cite{Pflug2016mor} develops a decomposition for coherent risk measures, while \cite{Dentcheva2020mp} establishes a disintegration formula for conditionally consistent risk forms on Polish spaces. 
VaR generally satisfies neither coherence nor conditional consistency, 
yet our decomposition successfully enables policy optimization of such a measure on general Borel spaces using  dynamic programming.

From an application perspective, our work contributes to VaR-based optimal execution. The problem was first introduced in the seminal work of \cite{Almgren2001jor}, which derived optimal static strategies from the mean–variance efficient frontier but left optimal dynamic strategies unresolved. \cite{JinYS2017qf} studies static execution through loss-probability minimization, computes the associated liquidity-adjusted VaR, and identifies adaptive execution as a direction for further research. For a CVaR objective, \cite{MinSK2022arXiv} derives optimal adaptive strategies in a continuous-time, infinite-horizon execution model. 
To the best of our knowledge, the characterization of optimal dynamic strategies for VaR-based execution remains an open problem. Section \ref{sec:VaRExecution} addresses this gap by applying the proposed VaR dynamic programming framework to maximize the static VaR of execution revenue in a discrete-time finite-horizon model, to characterize optimal dynamic strategies, and to evaluate their performance against static benchmarks.

\section{Preliminaries}\label{sec:Preliminaries}

\textbf{Notations:} Let $E$ be a Borel space, i.e., a Borel subset of a complete separable metric space, and let $\mcB(E)$ denote its Borel $\sigma$-algebra. 
Throughout, measurability between Borel spaces is understood with respect to their Borel $\sigma$-algebras.
Denote by $\mcL(E)$ and $\mcC_b(E)$ the spaces of real-valued measurable functions and bounded continuous functions on $E$, respectively. 
Let $\bar{\mbR} := \mbR \cup\{-\infty,+\infty\}$ be the extended real line, with the conventions $\sup\{\varnothing\} := -\infty$, $\inf\{\varnothing\} := +\infty$. All operations involving $\pm\infty$ are understood in $\bar{\mbR}$ whenever well-defined.
Let $\mbN$, $\mbN_+$, $\mbQ$ denote the sets of non-negative integers, positive integers, and rational numbers, respectively. We use $E^{\mbN}$ to denote the countable infinite products of $E$.
For $T \in \mbN$, write $[T] := \{0,\cdots,T\}$. Denote $\mcP(E)$ the set of all probability measures on $(E,\mcB(E))$.  For $x\in E$, let $\delta_x$ denote the Dirac measure at $x$; for $A \in \mcB(E)$, let $\bm{1}_{A}$ denote the indicator function. Denote by $\mathrm{Unif}[0,1]$ the uniform distribution on $[0,1]$, and by $\mcN(a,b^2)$ the normal distribution with mean $a$ and variance $b^2$.

In this section, we first introduce the notation for MDPs in general Borel spaces. We then present the definition of the VaR and VaR MDPs for both finite and infinite horizons, along with their corresponding optimization objectives.

\subsection{MDPs in General Space}
A Markov decision process (MDP) is specified by the tuple 
\[ (\bfX, \bfA, \{\bfA(x)\subset \bfA: x \in \bfX\}, Q, r_{t}). \]
The state space $\bfX$ and the action space $\bfA$ are standard Borel spaces endowed with their Borel $\sigma$-algebras $\mcB(\bfX)$ and $\mcB(\bfA)$, respectively. For each $x \in \bfX$, let $\bfA(x) \subseteq \bfA$ denote the nonempty set of admissible actions, and define the feasible state-action space by $\bfK := \{ (x,a): x\in \bfX, a\in \bfA(x)\}$, which is assumed to belong to $\mcB(\bfX)\otimes\mcB(\bfA)$. The system dynamics are governed by a stochastic kernel $Q(\cdot | \cdot)$ on $\bfX$ given $\bfK$; that is, $Q(\cdot | x,a)$ is a probability measure on $(\bfX,\mcB(\bfX))$ for every $(x,a) \in \bfK$, and $Q(C | \cdot)$ is measurable on $\bfK$ for every $C \in \mcB(\bfX)$.
At each time $t \ge 0$, when choosing action $a\in \bfA(x)$ at state $x$, a one-stage reward $r_t(x,a) \in \mbR$ is generated. 

Let $\bfH_0 := \bfX$ and for $t \in \mbN_+$, define $\bfH_t:= \bfK^t\times \bfX$. An element $h_t = (x_0,a_0,\cdots,x_t) \in \bfH_t$ is referred to as a history up to time $t$. Each $\bfH_t$ is endowed with its Borel $\sigma$-algebra.
Let $\mbF$ denote the set of all measurable selectors of $\bfA$, i.e., $\mbF := \{f: \bfX\to \bfA: f \text{ is measurable and } f(x) \in \bfA(x), \forall x\in \bfX \}$.

For $T\in\mbN_+$, a finite horizon admissible history-dependent policy is a sequence $\pi_T = (\varpi_0,\cdots,\varpi_{T-1})$, where for each $t\in[T-1]$, $\varpi_t$ is a  stochastic kernels on $\bfA$ given $\bfH_t$ satisfying $\varpi_t(A(x_t) | h_t) = 1$, for $h_t = (x_0,a_0,\cdots,x_t) \in \bfH_t$. The policy is called deterministic if there exists measurable selectors $f_t: \bfH_t \to \bfA$, such that $f_t(h_t) \in \bfA(x_t)$ , $\varpi_t(\cdot|h_t) = \delta_{f_t(h_t)}(\cdot)$, and Markov, if there exists $f_t \in \mbF$ such that $f_t(x_t)\in\bfA(x_t)$, $\varpi_t(\cdot|h_t) = \delta_{f_t(x_t)}(\cdot)$ for all $t\in[T-1]$. 
Let $\Pi_T$ denote the class of all admissible history-dependent policies over $T$ periods, and set $\Pi_0 = \{\varnothing\}$, where $\varnothing$ denotes the empty policy. An infinite horizon admissible history-dependent policy is a sequence $\pi = (\varpi_0,\varpi_1,\cdots)$, and the corresponding infinite-horizon policy class is denoted by $\Pi$.

For $T \in \mbN$, let $\Omega^T := \bfK^T\times\bfX$ be equipped with its product Borel $\sigma$-algebra $\mcB(\Omega^T)$. Given an initial distribution $\nu \in \mcP(\bfX)$ and a policy $\pi_T = (\varpi_0,\cdots,\varpi_{T-1})\in\Pi_T$, the Ionescu-Tulcea theorem (Proposition C.10 in \cite{Hernandez1996book}) implies the existence of a unique probability measure $\mbP_\nu^{\pi,T}$ on $(\Omega^T,\mcB(\Omega^T))$ given by
\begin{equation}\label{eq-IonescuTulceaTheorem}
    \mbP_\nu^{\pi_T,T}(dx_0,da_0,\cdots,dx_T) = \nu(dx_0) \prod_{t=0}^{T-1} (\varpi_t(da_t|(x_0,a_0,\cdots,x_t)) Q(dx_{t+1}|x_t,a_t))
\end{equation}
Let $\mbE_\nu^{\pi_T,T}$ denote the expectation w.r.t. $\mbP_\nu^{\pi_T,T}$. For a fixed initial state $x\in\bfX$, we write $\mbP_x^{\pi_T,T}:=\mbP_{\delta_x}^{\pi_T,T}$ and $\mbE_x^{\pi_T,T} := \mbE_{\delta_x}^{\pi_T,T}$, respectively.
When $T = 0$, the empty-product convention gives $\Omega^0 = \bfX$, $\mbP_\nu^{\varnothing,0} = \nu$ and $\mbP_x^{\varnothing,0} = \delta_x$. 

For the infinite-horizon model, let $\Omega^\infty := \bfK^{\mbN}$ be equipped with its product Borel $\sigma$-algebra $\mcB(\Omega^\infty)$. Given $\nu \in \mcP(\bfX)$ and $\pi = (\varpi_0,\varpi_1,\cdots) \in \Pi$, the Ionescu-Tulcea theorem yields a unique probability measure $\mbP_\nu^\pi$ on $(\Omega^\infty, \mcB(\Omega^\infty))$ whose finite-dimensional marginals satisfy
\[
    \mbP_\nu^\pi \circ \mathrm{proj}_T^{-1} = \mbP_\nu^{\pi_{0:T-1},T},\quad \forall T\in\mbN_+,
\]
where $\mathrm{proj}_T:\Omega^\infty \to \Omega^T$ is the canonical projection, i.e., for $\omega = (x_0,a_0,x_1,a_1,\cdots)\in\Omega^\infty$, we have $\mathrm{proj}_T(\omega) = (x_0,a_0,\cdots,x_T)$; and $\pi_{0:T-1} := (\varpi_0,\cdots,\varpi_{T-1}) \in \Pi_T$. Let $\mbE_\nu^\pi$ denote the expectation to $\mbP_\nu^\pi$. For $x\in\bfX$, we write $\mbP_x^\pi := \mbP_{\delta_x}^\pi$, $\mbE_x^\pi := \mbE_{\delta_x}^\pi$, respectively.

\subsection{VaR MDPs}
Let $(\Omega,\mcF,\mbP)$ be a standard probability space where $\mcF$ is a $\sigma$-algebra on $\Omega$ and $\mbP$ is a probability measure. Following \cite{Follmer2016book}, for a random variable $Z \in \mcL(\Omega)$, we define its \textit{Value-at-Risk at level $\alpha$} ($\alpha$-VaR) by
\begin{equation}\label{eq-ValueatRisk}
    \VaR_\alpha[Z] := \sup\{ z \in \mbR: \mbP[Z < z] \le \alpha\}, \quad \alpha \in [0,1),
\end{equation}
and set $\VaR_1[Z] := +\infty$. Since $z\mapsto \mbP[Z < z]$ is left-continuous, the supremum is attained whenever it is finite. Moreover, $\VaR_0[Z] = \ess\inf Z$, with the convention $\VaR_0[Z] := -\infty$ when $Z$ is unbounded below.

Consider the MDP introduced in the previous section. Fix a horizon $T\in\mbN$, a discount factor $\gamma \in (0,1]$, and a risk level $\alpha \in [0,1)$. Let $\bar{\lambda}$ be the uniform probability measure on $([0,1],\mcB([0,1]))$, and define
\[
\bar{\mbP}_x^{\pi_T,T} := \mbP_x^{\pi_T,T} \otimes\bar{\lambda},\quad \forall x\in\bfX, \pi_T\in\Pi_T.
\]
Let $U_T\sim \mathrm{Unif}[0,1]$ and be independent of the state-action process. Given a measurable terminal reward function $g:\bfX\times[0,1] \to \bar{\mbR}$, define
\begin{equation}\label{eq-objectivefunction-FH}
    J_{\alpha,T}(x,\pi_T) := \VaR_{\alpha}^{\bar{\mbP}_x^{\pi_T,T}} \left[ \sum_{t=0}^{T-1} \gamma^t r_t(X_t,A_t) + \gamma^T g(X_T,U_T) \right],\quad \forall x\in\bfX,\pi_T\in\Pi_T.
\end{equation}
The optimal VaR discounted cumulated reward objective function is denoted by
\begin{equation}\label{eq-OptimalObjective-FH}
    J_{\alpha,T}^*(x) := \sup_{\pi_T\in\Pi_T} J_{\alpha,T}(x,\pi_T),\quad \forall x\in \bfX.
\end{equation}
The \textit{finite-horizon VaR MDP problem} is to identify an optimal policy $\pi_T^* \in \Pi_T$ such that $J_{\alpha,T}(x,\pi_T^*) = J_{\alpha,T}^*(x)$ for all $x\in \bfX$.

For the infinite-horizon setting, let $\gamma \in (0,1)$ and assume stationary rewards, i.e., $r_t \equiv r$ for some $r: \bfK \to \mbR$. The corresponding VaR discounted cumulated reward objective is
\begin{equation}\label{eq-ObjectiveFunction-IH}
    J_{\alpha,\infty}(x,\pi) := \VaR_{\alpha}^{\mbP_x^{\pi}} \left[ \sum_{t=0}^\infty \gamma^t r(X_t,A_t) \right], \quad x\in \bfX, \pi \in \Pi.
\end{equation}
Similarly, define
\begin{equation}\label{eq-optimalfunction-IH}
    J_{\alpha,\infty}^*(x) := \sup_{\pi\in\Pi} J_{\alpha,\infty}(x,\pi),\quad \forall x\in \bfX.
\end{equation}
The \textit{infinite-horizon discounted VaR MDP problem} is to find an optimal policy $\pi^* \in \Pi$ such that $J_{\alpha,\infty}^*(x) = J_{\alpha,\infty}(x,\pi^*)$ for all $x\in \bfX$. 

\begin{remark}
    It is worth noting that the VaR representation in \eqref{eq-ValueatRisk} also suggests a two-stage formulation, analogous to \cite{Bauerle2011mmor, Bauerle2021mmor, Bauerle2024mmor, Bauerle2026ejor, Xia2025arXiv}. For the finite-horizon problem \eqref{eq-objectivefunction-FH}, one may first solve, for each fixed threshold $z$, a probabilistic MDP $p_T(x,z) := \inf_{\pi_T\in\Pi_T} \bar{\mbP}_x^{\pi_T,T}[Z_T^{\pi} < z]$, for all $x\in\bfX$, and recover the optimal VaR through $\sup\{z: p_T(x,z) \le \alpha\}$, where $Z_T^{\pi_T}$ denotes the $T$-stage discounted total reward under policy $\pi_T$. An analogous formulation can be developed for the infinite-horizon problem. This threshold-based approach, however, is less tractable on general state spaces. The function $p_T(x,\cdot)$ may be discontinuous, which complicates the outer optimization. Moreover, the inner problem requires dynamic programming on an augmented state space that can be unbounded, whereas the augmentation adopted in this work uses the bounded risk level interval. 
\end{remark}

\section{VaR Decomposition Theorem}\label{sec:VaRDecomposition}
In this section, we present a general VaR decomposition theorem. The result was first introduced for finite discrete random variables in Lemma 2 of \cite{LiXC2022opre} and was subsequently refined in \cite{Hau2023neurips, Hau2025aistats}.

Let $(\Omega,\mcB(\Omega))$ and $(E,\mcB(E))$ be standard Borel spaces, and let $\mbP$ be a probability measure on $(\Omega,\mcB(\Omega))$. 
Let $Y:\Omega\to E$ be measurable, and let $\nu:=\mathbb P\circ Y^{-1}$ denote the law of $Y$. By the disintegration theorem for standard Borel spaces (see Theorem 8.5 in \cite{Kallenberg2021book}), there exists a regular conditional probability measure
\begin{equation}\label{eq-RegularConditionalMeasure}
    \mu: \mcB(\Omega)\times E \to[0,1],\qquad (B,y)\mapsto \mu(B,y),
\end{equation}
such that for each $y\in E$, $B\mapsto \mu(B,y)$ is a probability measure
on $\Omega$; for each $B\in\mathcal B(\Omega)$, the map $y\mapsto \mu(B,y)$ is $\mcB(E)$-measurable; and
\begin{equation}\label{eq-RCMIntegral}
    \int_A \mu(B,y)\nu(dy) = \mbP(B\cap Y^{-1}(A)),
\quad \forall B\in \mcB(\Omega), A\in\mcB(E).
\end{equation}
We fix an arbitrary version of such a regular conditional probability measure. Then for a random variable $X\in\mcL(\Omega)$ and $z\in\mathbb R$, we can define
\[
\mbP[X < z \mid Y = y] := \mu(\{X < z\}, y), \quad \forall y\in E.
\]
The conditional $\alpha$-VaR of $X$ given $Y = y$ is then defined as
\begin{equation}\label{eq-ConditionalVaR}
    \VaR_\alpha[X \mid Y = y] :=\sup\{z\in \mbR: \mu(\{X < z\},y) \le \alpha \},
\quad \forall \alpha\in[0,1), y\in E,
\end{equation}
and $\VaR_1[X\mid Y =y] := +\infty$, for all $y\in E$.

We denote by $\supp(\nu)$ the support of $\nu$ on $E$, i.e., the intersection of all closed subsets of $E$ whose complements have $\nu$-measure zero (see Proposition 1.45 in \cite{Follmer2016book}). 

We now give the VaR decomposition theorem for an arbitrary real-valued random variable.
\begin{thm}\label{thm:VaRDecomposition}
    Let $X$ and $Y$ be random variables defined on $(\Omega,\mcF,\mbP)$, taking values in $(\mbR,\mcB(\mbR))$ and $(E,\mcB(E))$, respectively. Let $\nu := \mbP \circ Y^{-1}$ be the law of $Y$, and let $\mcY \subset E$ satisfy $\supp(\nu) \subseteq \mcY$.
    Then for every $\alpha \in [0,1]$, we have
    \begin{equation}\label{eq-VaRDecomposition}
        \VaR_{\alpha}[X] = \sup_{\zeta\in\mcZ_{\alpha}(\nu)} \inf_{y\in\mathcal{Y}} \VaR_{\zeta(y)} [X|Y=y],
    \end{equation}
    where 
    \[
        \mcZ_\alpha(\nu) := \left\{ \zeta \in \mcL(E) : 0 \le \zeta \le 1, \int_E \zeta(y) \nu(dy) \le \alpha\right\}.
    \]
    Moreover, the supremum is attained at $\zeta_\alpha(y) := \mu(\{X < \VaR_\alpha[X]\},y)$ for $\alpha \in (0,1)$ and also for $\alpha = 0$ with $\VaR_0[X] > -\infty$, where $\mu$ is the regular conditional measure defined in \eqref{eq-RegularConditionalMeasure}. When $\alpha = 0$ with $\VaR_0[X] = -\infty$, every $\zeta \in \mcZ_0(\nu)$ attains the supremum.
\end{thm}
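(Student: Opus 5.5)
We prove the two inequalities separately, writing $v:=\VaR_\alpha[X]$, $F(y,z):=\mu(\{X<z\},y)$, and $R:=\sup_{\zeta\in\mcZ_\alpha(\nu)}\inf_{y\in\mcY}\VaR_{\zeta(y)}[X\mid Y=y]$. The trivial cases go first. For $\alpha=1$ both sides equal $+\infty$: take $\zeta\equiv 1$, so every conditional VaR is $+\infty$. For $\alpha=0$ with $v=-\infty$, the claim is that every $\zeta\in\mcZ_0(\nu)$ gives infimum $-\infty$; this falls out of the upper bound below, since $R\le v$. Two tools are used throughout. The first is the integral identity \eqref{eq-RCMIntegral} with $A=E$, which gives $\int_E F(y,z)\,\nu(dy)=\mbP[X<z]$. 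The second is the definition \eqref{eq-ConditionalVaR}, which gives the equivalence $\VaR_{\beta}[X\mid Y=y]\ge z'$ whenever $F(y,z')\le \beta$, together with the converse direction: if $\VaR_\beta[X\mid Y=y]>z$ then $F(y,z)\le\beta$, because $z\mapsto F(y,z)$ is nondecreasing.

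\textbf{Lower bound $R\ge v$ via $\zeta_\alpha$.} Assume $v$ is finite; if $v=+\infty$ then $\mbP[X<z]\le\alpha$ for all $z$, and a limiting argument handles it. Set $\zeta_\alpha(y)=F(y,v)$. This is measurable, lies in $[0,1]$, and satisfies $\int\zeta_\alpha\,d\nu=\mbP[X<v]\le\alpha$, using left-continuity as noted after \eqref{eq-ValueatRisk}. So $\zeta_\alpha\in\mcZ_\alpha(\nu)$. For every $y$ we have trivially $F(y,v)\le\zeta_\alpha(y)$, hence $\VaR_{\zeta_\alpha(y)}[X\mid Y=y]\ge v$ for every $y\in\mcY$. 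This gives $R\ge v$ with attainment. For $\alpha=0$ with $v>-\infty$ the same computation applies.

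\textbf{Upper bound $R\le v$.} Fix $\zeta\in\mcZ_\alpha(\nu)$ and suppose, for contradiction, that $\inf_{y\in\mcY}\VaR_{\zeta(y)}[X\mid Y=y]>z>v$ for some real $z$. Then $F(y,z)\le\zeta(y)$ for all $y\in\mcY$. Integrating over $\mcY$ gives
\[
\mbP[X<z]=\int_E F(y,z)\,\nu(dy)\le\int_E\zeta\,d\nu\le\alpha.
\]
This needs $\nu(\mcY)=1$, or at least $\nu(E\setminus\mcY)=0$. That requires $\mcY$ to carry full $\nu$-measure, which follows from $\supp(\nu)\subseteq\mcY$ because in a separable metric space the support has full measure, by Proposition 1.45 in \cite{Follmer2016book}. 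Since $\mcY$ need not itself be Borel, I would integrate over $\supp(\nu)$, which is closed and hence Borel, instead. But $\mbP[X<z]\le\alpha$ forces $v\ge z$, contradicting $z>v$.

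The main subtlety is exactly this full-measure step, together with the measurability of $y\mapsto F(y,z)$, which is given by the regular conditional measure. Everything else is a direct pointwise inequality. The infimum over a set possibly larger than $\supp(\nu)$ only helps the upper bound and costs nothing in the lower bound, since that bound holds for all $y$.
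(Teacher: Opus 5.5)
Your proof is correct and uses the same core argument as the paper. The lower bound comes from the witness $\zeta_\alpha(y)=\mu(\{X<\VaR_\alpha[X]\},y)$, and the upper bound is a contradiction obtained by integrating $\mu(\{X<z\},y)\le\zeta(y)$ over $\supp(\nu)$, which carries full $\nu$-measure. The differences are only organizational. By arguing pointwise with conditional VaR you bypass the paper's intermediate reformulation and its sup--inf interchange \eqref{eq-VaRDecomposition-2}, and your general upper bound also covers the case $\alpha=0$, $\VaR_0[X]=-\infty$, which the paper proves separately. Finally, the case $\VaR_\alpha[X]=+\infty$ that you set aside cannot occur for $\alpha<1$, since $X$ is real-valued.
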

\begin{proof}
    When $\alpha = 1$, the supremum over $\mcZ_\alpha(\nu)$ is clearly attained by $\zeta\equiv 1$, which yields $\VaR_{\zeta(y)}[X|Y=y] = +\infty$ for all $y\in E$. So the claim follows immediately. 

    Now consider $\alpha \in (0,1)$.
    For $z \in \mbR$, define
    \[
        f_z(y) := \mu(\{X < z\},y),\quad \forall y\in E.
    \]
    Since $\mu(\cdot,y)$ is a probability measure, $0\le f_z \le 1$ and $z\mapsto f_z(y)$ is non-decreasing and left-continuous for each $y \in E$. Let $z_0 := \VaR_\alpha[X] \in \mbR$.
    By \eqref{eq-RCMIntegral}, $\int_E f_{z_0}(y) \nu(dy) = \mbP[X < \VaR_\alpha[X]] \le \alpha$,
    and hence $f_{z_0} \in \mcZ_\alpha(\nu)$.
    
    From the definition of VaR, we have
    \begin{align*}
        \VaR_\alpha[X] = \sup\{ z: \mbP[X < z] \le \alpha\}
        = \sup\left\{ z: \int_E f_z(y) \nu(dy) \le \alpha\right\}.
    \end{align*}
    Next we show that
    \begin{equation}\label{eq-VaRDecomposition-1}
        \VaR_\alpha[X] = \sup_{\zeta \in \mcZ_{\alpha}(\nu)} \sup \left\{ z: f_z(y) \le \zeta(y) ,\forall y \in \mcY \right\}.
    \end{equation}
    
    To see this, define the sets
    \begin{align*}
        D :=\left\{z: \int_E f_z(y) \nu(dy) \le \alpha \right\},\quad D_0 := \{z: f_z(y) \le f_{z_0}(y), \forall y \in E\}. 
    \end{align*}
    Clearly, for any $z \in D_0$, we have $\int_E f_z(y) \nu(dy) \le \int_E f_{z_0}(y) \nu(dy) \le \alpha$. Hence $D_0 \subset D$. On the other hand, for any $\hat{z} \in D$, if $\hat{z} > z_0$, we have $\int_E f_{z_0}(y) d\nu(y) \le \int_E f_{\hat{z}}(y) \le \alpha$ which implies that $\mbP[X < \hat{z}] \le \alpha$ and $\hat{z} > \VaR_\alpha[X]$, which contradicts the definition of $\VaR_\alpha[X]$. This implies that $z \le z_0$ for any $z\in D$. Combining with the monotonicity of $f_z$ in $z$, we conclude that $D\subset D_0$. Therefore, $D = D_0$. We obtain that for $\mcY \subseteq E$,
    \begin{align*}
        \VaR_\alpha[X] &= \sup \{ z: f_z(y) \le f_{z_0}(y), \forall y\in E\}
        \le \sup_{\zeta\in\mcZ_\alpha(\nu)} \sup \{ z: f_z(y) \le \zeta(y), \forall y\in \mcY\}.
    \end{align*}

    To show the opposite inequality, suppose there exists some $\epsilon > 0$ and $\zeta \in \mcZ_{\alpha}(\nu)$ such that $z_0 + \epsilon \in \{z : f_z(y) \le \zeta(y), \forall y\in \mcY\}$. This implies $f_{z_0 + \epsilon}(y) \le \zeta(y)$, for all $y\in\mcY$. Since $\supp(\nu)\subseteq\mcY$, integrating both sides over $y$ yields
    \begin{align*}
        \mbP[X < z_0 + \epsilon] = \int_E f_{z_0 + \epsilon}(y) \nu(dy) \le \int_E \zeta(y) \nu(dy) \le \alpha.
    \end{align*}
    By the definition of $\VaR_\alpha[X]$, this implies that $\VaR_\alpha[X] \ge \VaR_\alpha[X] + \epsilon$, which is a contradiction. Therefore, such $\epsilon > 0$ does not exist. We must have that $\{z: f_z(y) \le \zeta(y), \forall y\in \mcY\} \cap \{z: z > z_0\} = \varnothing$, for all $\zeta \in \mcZ_{\alpha}(\nu)$. This implies that 
    \[
    \VaR_\alpha[X] \ge \sup_{\zeta\in\mcZ_{\alpha}(\nu)}\sup\{z: f_z(y) \le \zeta(y), \forall y\in \mcY\}.
    \]
    Hence \eqref{eq-VaRDecomposition-1} holds and moreover, the supremum over $\mcZ_{\alpha}(\nu)$ is achieved by $\zeta_\alpha = f_{z_0}$.

    We are left with showing that for all $\zeta\in \mcZ_{\alpha}(\nu)$,
    \begin{equation}\label{eq-VaRDecomposition-2}
        \sup \left\{ z: f_z(y) \le \zeta(y), \forall y\in\mathcal{Y}\right\} =\inf_{y \in \mcY} \sup \left\{ z:  f_z(y) \le \zeta(y) \right\}.
    \end{equation}
    Since \eqref{eq-VaRDecomposition-2} combined with \eqref{eq-VaRDecomposition-1} yields
    \begin{equation}\label{eq-VaRDecomposition-3}
    \begin{aligned}
        \VaR_\alpha[X] &= \sup_{\zeta \in \mcZ_{\alpha}(\nu)}\sup \left\{ z: f_z(y) \le \zeta(y), \forall y\in\mcY \right\}\\
        &=  \sup_{\zeta \in \mcZ_{\alpha}(\nu)} \inf_{y \in \mcY} \sup \left\{ z:  f_z(y) \le \zeta(y) \right\}\\
        &= \sup_{\zeta \in \mcZ_{\alpha}(\nu)} \inf_{y \in \mcY} \VaR_{\zeta(y)}[X|Y = y],
    \end{aligned}
    \end{equation}
    where the last equality follows from \eqref{eq-ConditionalVaR}. Moreover, combining \eqref{eq-VaRDecomposition-2}, \eqref{eq-VaRDecomposition-3} and our previous analysis gives
    \begin{align*}
        \inf_{y\in \mcY} \VaR_{\zeta_\alpha(y)} [X|Y = y] &= \inf_{y\in \mcY} \sup\{z: f_z(y) \le \zeta_\alpha(y)\} \\
        &= \sup\{ z: f_z(y) \le \zeta_\alpha(y), \forall y\in \mcY\} \\
        &= \VaR_\alpha[X] = \sup_{\zeta\in\mcZ_{\alpha}(\nu)} \inf_{y\in\mcY} \VaR_{\zeta(y)}[X|Y = y],
    \end{align*}
    which implies that $\zeta_\alpha$ indeed attains the supremum in \eqref{eq-VaRDecomposition}.

    We finalize this proof by showing \eqref{eq-VaRDecomposition-2} for any fixed $\zeta\in\mcZ_{\alpha}(\nu)$. 
    For any fixed $y\in\mcY$, denote
    \[
    A_y := \{ z\in\mbR: f_z(y) \le \zeta(y)\},\quad z_y := \sup A_y \in \bar{\mbR}.
    \]
    Clearly we have $\sup\cap_{y\in\mcY} A_y \le \sup A_{y'}$ for any $y'\in\mcY$ and hence $\sup \cap_{y\in\mcY} A_y \le \inf_{y\in\mcY} z_y$. 
    Conversely, let $\bar{z} < \inf_{y\in\mcY} z_y$, then for each $y\in\mcY$, $\bar{z} < z_y$. Since $z\mapsto f_z(y)$ is non-decreasing, the set $A_y$ is downward closed, and hence $\bar{z} \in A_y$. This implies $\bar{z}\in\cap_{y\in\mcY} A_y$. Let $\bar{z}\to \inf_y z_y$, we conclude that $\inf_{y\in\mcY} z_y \le \sup \cap_{y\in\mcY} A_y$. Combined with the reversed inequality, This proves \eqref{eq-VaRDecomposition-2}.

    The case $\alpha = 0$ with $\VaR_0[X] > -\infty$ follows analogously. It remains to consider $\alpha = 0$ with $\VaR_0[X] = -\infty$. Fix an arbitrary $\zeta \in \mcZ_0(\nu)$. Since $0\le \zeta \le 1$ and $\int_E \zeta d\nu = 0$, we have $\zeta = 0$ $\nu$-almost everywhere. For any $z\in \mbR$, we have $\int_E f_z(y) \nu(dy) = \mbP[X < z] > 0$, so $D_z :=\{y: f_z(y) > 0\}$ has positive $\nu$-measure. Hence $D_z \cap \{y: \zeta(y) = 0 \} \cap \supp(\nu) \neq \varnothing$. Choose $y_z$ in this intersection. Then $\mu(\{X < z\}, y_z) > 0$, which implies $\VaR_0[X|Y = y_z] < z$. Since $\zeta(y_z) = 0$ and $\supp(\nu) \subseteq \mcY$, we have  $\inf_{y\in \mcY} \VaR_{\zeta(y)}[X|Y=y] \le \VaR_{0}[X|Y=y_z] < z$. As this holds for every $z\in\mbR$, letting $z \downarrow -\infty$ yields $\inf_{y\in\mcY} \VaR_{\zeta(y)}[X|Y=y] = -\infty$. Thus \eqref{eq-VaRDecomposition} holds for every $\zeta\in\mcZ_0(\nu)$. 
    This completes the proof.
\end{proof}

It is well known that, on an atomless probability space, any random variable can be represented as a transformation of a uniformly distributed random variable via its quantile function. The following lemma combines this quantile representation with the VaR decomposition theorem and serves as a key building block for deriving the dynamic programming formulations of the VaR MDP. 

Before presenting the lemma, we first define the function set that will be of interest. Let $E$ be a Borel space, and define $\hat{E}:= E\times[0,1]$. Denote $\mcU(\hat{E})$ the collection of all functions $\psi:\hat{E}\to\bar{\mbR}$ such that
\begin{enumerate}[label = (A\arabic*)]
        \item $x\mapsto \psi(x,\alpha)$ is measurable for each $\alpha \in [0,1]$;
        \item $\alpha \mapsto \psi(x,\alpha)$ is non-decreasing and right-continuous for each $x\in E$;
        \item $\psi(x,0) \in [-\infty, \infty)$ for all $x\in E$;
        \item $\psi(x,\alpha) \in \mbR$ for $\alpha \in (0,1)$ and $x\in E$;
        \item $\psi(x,1) = +\infty$ for $x\in E$.
    \end{enumerate}

\begin{lemma}\label{lemma:QuantileTransform}
    Let $E$ be a Borel space equipped with $\mcB(E)$ and $(\Omega,\mcF,\mbP)$ be an atomless probability space supporting $U\sim\mathrm{Unif}[0,1]$. 
    Let $\psi\in\mcU(\hat{E})$.
    Then the following assertions hold:
    \begin{enumerate}[label = (\roman*)]
        \item $\psi$ is measurable.
        \item For every $\alpha \in [0,1]$, 
        \[
            \VaR_\alpha[\psi(x,U)] = \psi(x,\alpha),\quad \forall x\in E.
        \]  
        \item If $Y$ is a random variable on $(\Omega,\mcF,\mbP)$ taking values in $(E,\mcB(E))$, independent of $U$, with law $\nu := \mbP\circ Y^{-1}$, then for every $\alpha \in [0,1]$,
        \begin{align*}
            \VaR_{\alpha}[\psi(Y,U)] = \sup_{\zeta \in \mcZ_{\alpha}(\nu)} \inf_{y\in \mcY} \psi(y,\zeta(y)),
        \end{align*}
        where $\mcZ_{\alpha}(\nu)$ and $\mcY$ are defined as in Theorem \ref{thm:VaRDecomposition}.
    \end{enumerate}
\end{lemma}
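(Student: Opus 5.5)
For (i) we use the standard argument that a Carathéodory-type function with monotone right-continuous sections is jointly measurable. For $n\in\mbN_+$ and $\alpha\in[0,1)$, let $k$ satisfy $\alpha\in[(k-1)/2^n,k/2^n)$ and set $\psi_n(x,\alpha):=\psi(x,k/2^n)$; put $\psi_n(x,1):=\psi(x,1)$. Each $\psi_n$ is a finite sum of products of a $\mcB(E)$-measurable function of $x$ (by (A1)) and an indicator of a Borel set in $\alpha$, so it is $\mcB(E)\otimes\mcB([0,1])$-measurable. Since $k/2^n\downarrow\alpha$ from above, (A2) gives $\psi_n(x,\alpha)\to\psi(x,\alpha)$ pointwise in $\bar{\mbR}$. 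Hence $\psi$ is measurable as a pointwise limit of measurable $\bar{\mbR}$-valued functions, and $\mcB(\hat E)=\mcB(E)\otimes\mcB([0,1])$ because both spaces are separable metric.

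For (ii), fix $x$ and write $q(u):=\psi(x,u)$, which is non-decreasing and right-continuous. The case $\alpha=1$ holds by convention and (A5). For $\alpha\in[0,1)$ we show $\sup\{z:\mbP[q(U)<z]\le\alpha\}=q(\alpha)$.

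Lower bound: if $z\le q(\alpha)$, then $\{q(U)<z\}\subseteq\{U<\alpha\}$ by monotonicity, so $\mbP[q(U)<z]\le\alpha$. When $q(\alpha)>-\infty$ we take $z=q(\alpha)$; when $q(\alpha)=-\infty$ (possible only at $\alpha=0$) this bound is trivial.

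Upper bound: if $z>q(\alpha)$, right-continuity gives $\delta>0$ with $q(u)<z$ on $[\alpha,\alpha+\delta)$. Since $\{U<\alpha\}\subseteq\{q(U)<z\}$ as well, $\mbP[q(U)<z]\ge\alpha+\delta>\alpha$. So no $z>q(\alpha)$ is admissible, which gives equality. When $q(0)=-\infty$ and $\alpha=0$, every real $z$ is excluded in the same way, so the supremum is $-\infty$, consistent with $\VaR_0=\ess\inf$. The only care needed here is at the endpoints, handled via (A3)–(A5).

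For (iii), apply Theorem \ref{thm:VaRDecomposition} with $X:=\psi(Y,U)$, which is a random variable by (i). The key step, and the main obstacle, is to identify the conditional VaR: we need $\VaR_\beta[X\mid Y=y]=\psi(y,\beta)$ for every $\beta\in[0,1]$ and every $y\in\mcY$. The difficulty is that the theorem takes an infimum over \emph{all} $y\in\mcY$, not only $\nu$-a.e. $y$, so the particular version of $\mu$ matters. We would use independence and the freedom to "fix an arbitrary version" in \eqref{eq-RegularConditionalMeasure} to choose $\mu(\cdot,y)$ so that the law of $(Y,U)$ under $\mu(\cdot,y)$ is $\delta_y\otimes\mathrm{Unif}[0,1]$. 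Concretely, we take $\Omega$ standard Borel, or pass to the canonical space $E\times[0,1]$ carrying $(Y,U)$, where the kernel $y\mapsto\delta_y\otimes\bar\lambda$ is a regular conditional probability; \eqref{eq-RCMIntegral} is verified by Fubini using independence. With this version, $\mu(\{X<z\},y)=\bar\lambda\{u:\psi(y,u)<z\}$, and (ii) yields $\VaR_\beta[X\mid Y=y]=\psi(y,\beta)$ for every $y$. Substituting into \eqref{eq-VaRDecomposition} with $\beta=\zeta(y)$ gives the claim. If a version-free statement is wanted, we would also note that the right-hand side is insensitive to modifying $\psi(y,\cdot)$ on a $\nu$-null set off $\supp(\nu)$, but we would not rely on this.
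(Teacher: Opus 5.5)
Your proposal is correct and follows the paper's route. For (i) you use right-continuity at countably many points: dyadic upper approximations where the paper writes the sub-level set as the countable union $\bigcup_n\{x:\psi(x,\alpha_n)<c\}\times[0,\alpha_n]$. For (ii) you prove directly the quantile identity that the paper cites from Hau et al. For (iii) you fix the explicit conditional law $y\mapsto \mathrm{Law}(\psi(y,U))$ via independence and Fubini, then invoke (ii) and Theorem~\ref{thm:VaRDecomposition}, as the paper does with its kernel $K(\cdot\,|\,y)$.

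Of your two options in (iii), only the canonical-space one works in general. On the original $\Omega$ you cannot require $\mu(\{Y=y\},y)=1$ when $y\in\mathcal{Y}$ lies outside the range of $Y$. So either pass to $E\times[0,1]$ with $\nu\otimes\bar{\lambda}$, which is legitimate since VaR depends only on the law, or, like the paper, note that Theorem~\ref{thm:VaRDecomposition} only uses a regular conditional distribution of $X$ given $Y$.
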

\begin{proof}
    (i) To show that $\psi$ is measurable, we must show the sub-level set $\{(x,\alpha)\in E\times[0,1]: \psi(x,\alpha) < c\}\in \mcB(E)\otimes \mcB([0,1])$, for all $c \in \mbQ$. Fix a $c\in\mbQ$ and let $\{\alpha_n\}_{n\in\mbN}$ be the sequence of rational numbers in $[0,1]$. Then it suffices to show that
    \begin{equation}\label{eq-QuantileTransform-1}
         \{(x,\alpha): \psi(x,\alpha) < c\} = \bigcup_{n} \{x: \psi(x,\alpha_n) < c\} \times [0,\alpha_n],
    \end{equation}
    since the RHS is in $\mcB(E)\otimes\mcB([0,1])$, by which the measurability follows. 

    It is clear that the RHS is a subset of the LHS in \eqref{eq-QuantileTransform-1}. To see the opposite inclusion, notice that for any $(x_0,\alpha_0) \in \{(x,a) : \psi(x,\alpha) < c\}$, there exists some rational $\alpha_n > \alpha_0$ such that $\psi(x_0,\alpha_n) < c$. 
    Indeed, by (A2), 
    if $\psi(x_0,\alpha_0) < c$, then there exists some constant $\delta > 0$ such that for all $\alpha \in [\alpha_0,\alpha_0 + \delta) \cap [0,1]$, we have $\psi(x_0,\alpha) < c$. Since $\mbQ\cap[0,1]$ is dense in $[0,1]$, there must exist some $\alpha_n \in \mbQ\cap(\alpha_0,\alpha_0+\delta)\cap[0,1]$ such that $\psi(x_0,\alpha_n) < c$.
    Furthermore,
    \[ (x_0,\alpha_0) \in \{ x: \psi(x,\alpha_n) < c\} \times [0,\alpha_n] \subset \bigcup_n \{ x: \psi(x,\alpha_n) < c\} \times [0,\alpha_n]. \]
    Since $(x_0,\alpha_0) \in \{(x,\alpha): \psi(x,\alpha) < c\}$ is arbitrary, \eqref{eq-QuantileTransform-1} holds.
    
    (ii) The result then follows from the quantile representation. See Lemma B.2 in \cite{Hau2025aistats}.

    (iii) From (i), $\psi$ is measurable. For each $y\in E$, define
    \begin{equation}\label{eq-kernelK}
        K(C|y) := \int_0^1 \bm{1}_{\{\psi(y,u)\in C\}} du,\quad \forall C \in \mcB(\bar{\mbR}).
    \end{equation}
    By the joint measurability, $y\mapsto K(C|y)$ is measurable for each $C\in\mcB(\bar{\mbR})$, and clearly $C\mapsto K(C|y)$ is a probability measure for each $y\in E$. Hence $K:\mcB(\bar{\mbR})\times E \to [0,1]$ is a stochastic kernel.
    Since $Y$ and $U$ are independent, for any $A \in \mcB(E)$ and $C \in \mcB(\bar{\mbR})$, we have
    \begin{align*}
        \mbP[Y \in A, \psi(Y,U) \in C] = \int_A \int_0^1 \bm{1}_{\{\psi(y,u) \in C\}} du \nu(dy) = \int_A K(C|y) \nu(dy). 
    \end{align*}
    This implies that $K(\cdot|y)$ is a regular conditional distribution of $\psi(Y,U)$ given $Y=y$, and this version is defined through \eqref{eq-kernelK} explicitly for each $y\in E$. Hence following \eqref{eq-ConditionalVaR}, we have
    \[
    \VaR_\alpha[\psi(Y,U) \mid Y =y] = \VaR_\alpha[\psi(y,U)] = \psi(y,\alpha),\quad \forall \alpha \in [0,1], y \in E,
    \]
    where the second equality follows from (ii). Since $\psi(Y,U) \in\mbR$ almost surely, then by Theorem \ref{thm:VaRDecomposition}, we obtain the desired result.
    This completes the proof.
\end{proof}

\section{VaR Dynamic Programming in Finite-horizon}\label{sec:VaRDPFinite}
In this section, we present the dynamic programming principle for the finite-horizon VaR MDP. We begin by stating the optimality result associated with the dynamic programming equation, followed by a discussion on the existence of an optimal policy.

Before moving on, we present the notations that will be useful for our theorems. 
Based on the VaR decomposition \eqref{eq-VaRDecomposition}, it is natural to augment the state space $\bfX$ with the risk level $\alpha$.
To this end, let $\hat{\bfX}:=\bfX\times[0,1]$ be the augmented state space of $\bfX$ with $[0,1]$ and $(x,\alpha) \in \hat{\bfX}$ be a state in this augmented space. We also equip $\hat{\bfX}$ with its Borel $\sigma$-algebra $\mcB(\hat{\bfX}) = \mcB(\bfX)\otimes \mcB([0,1])$. 
Define 
\[
\hat{\bfK}:=\{(x,\alpha,a): (x,\alpha) \in \hat{\bfX}, a\in \bfA(x)\},
\] 
being the set of all augmented state-action pairs. Let $\hat{\mbF}$ be the set of all measurable mappings $\hat{f}:\hat{\bfX} \to \bfA$ with $\hat{f}(x,\alpha)\in \bfA(x)$ for all $(x,\alpha) \in \hat{\bfX}$. 

We derive the dynamic programming formulation for the finite-horizon VaR MDP using backward induction. Fix $t \in [T-1]$. We first introduce the notation for the cumulative discounted reward from time $t$ onward. 
Let $\Pi_{t:T-1}$ and $\bfH_{t:T-1}$ denote the sets of all history-dependent policies and histories over the periods $t,\cdots,T-1$, respectively. Specifically, for any $\pi_{t:T-1} := (\varpi_{t:t},\cdots,\varpi_{t:T-1}) \in \Pi_{t:T-1}$ and $k =t,\cdots,T-1$, the stochastic kernel $\varpi_{t:k}(\cdot|h_{t:k})$ specifies a probability distribution over $\bfA(x_k)$ given the history $h_{t:k} := (x_t,a_t,\cdots,x_k) \in \bfH_{t:k}$.
Fix $t\in[T-1]$, and for $\pi_{t:T-1} \in \Pi_{t:T-1}$ and $(x,\alpha) \in \hat{\bfX}$, define 
\begin{equation}\label{eq-hatvpi}
    \hat{v}_{t,\pi_{t:T-1}} (x,\alpha) := \VaR_{\alpha}^{\bar{\mbP}_x^{\pi_{t:T-1},T-t}} \left[ \sum_{j=0}^{T-t-1} \gamma^{j} r_{j+t}(X_{j},A_{j}) + \gamma^{T-t}g(X_{T-t},U_{T-t}) \right],
\end{equation}
where $X_0 = x$, and $U_{T-t}\sim\mathrm{Unif}[0,1]$ is independent of the state-action process. 
Thus, $\hat{v}_{t,\pi_{t:T-1}}(x,\alpha)$ is the $\alpha$-VaR of the cumulative discounted \textit{reward-to-go} over periods $t$ to $T-1$, starting from state $x$ and following policy $\pi_{t:T-1}$. When $t=T$, we denote $\pi_{T:T-1} := \varnothing \in \Pi_{T:T-1}:= \Pi_{0}$, and define
\begin{equation}\label{eq-hatvTpiTT-1}
    \hat{v}_{T,\pi_{T:T-1}}(x,\alpha) := \VaR_\alpha^{\bar{\mbP}_x^{\pi_{T:T-1},0}}\left[ g(X_0,U_0) \right].
\end{equation}
where $U_0 \sim \mathrm{Unif}[0,1]$. For $t \in [T]$, define
\begin{equation}\label{eq-hatv}
    \hat{v}_t (x,\alpha) := \sup_{\pi_{t:T-1} \in \Pi_{t:T-1}} \hat{v}_{t,\pi_{t:T-1}} (x,\alpha),\quad \forall (x,\alpha) \in \hat{\bfX}.
\end{equation}
Then for any fixed $\alpha \in [0,1)$, the finite-horizon VaR MDP objective $J_{\alpha,T}^*(x) = \hat{v}_0(x,\alpha)$, for all $x \in \bfX$.

\subsection{VaR Dynamic Programming Formulation}
We now introduce the dynamic programming equation for the finite-horizon VaR MDP.

For any $v:\hat{\bfX}\to\bar{\mbR}$, $t \in [T-1]$ and a discount factor $\gamma \in (0,1]$, define the operator
\begin{equation}\label{eq-OperatorLt}
\begin{aligned}
    (L_tv)(x,\alpha,a) := & r_t(x,a) + \gamma \sup_{\zeta\in\mcZ_\alpha(Q(\cdot|x,a))} \inf_{x'\in\bfX} v\left(x',\zeta(x')\right),\\
    &\forall (x,\alpha)\in \bfX\times [0,1), a\in \bfA(x), t\in [T-1],
\end{aligned}
\end{equation}
where $\mcZ_\alpha(Q(\cdot|x,a))$ is defined as
\begin{equation}\label{eq-mcZalphaQ}
    \mcZ_\alpha(Q(\cdot|x,a)) := \left\{ \zeta \in \mcL(\bfX): 0 \le \zeta \le 1, \int_{\bfX} \zeta(y) Q(dy|x,a) \le \alpha\right\}, 
\end{equation}
and $(L_tv)(x,1,a) := \infty$, for all $(x,a) \in \bfK$.
Define its maximum reward operator as
\begin{equation}\label{eq-OperatormsLt}
    (\msL_t v) (x,\alpha) := \sup_{a\in \bfA(x)} (L_tv)(x,\alpha,a),\quad \forall (x,\alpha) \in \hat{\bfX}.
\end{equation}
Then we can write a recursive equation in terms of $v$ using $\msL_t$: for all $(x,\alpha) \in \bfX\times[0,1)$,
\begin{equation}\label{eq-Bellman-FH}
    \begin{aligned}
        v_T(x,\alpha) &:= g(x,\alpha),\\
        v_t(x,\alpha) &:= (\msL_t v_{t+1})(x,\alpha), \quad \forall t \in [T-1],
    \end{aligned}
\end{equation}
and $v_t(x,1):=\infty$ for any $x\in\bfX$, $t\in [T]$.
We will see shortly that this is actually the \textit{Bellman optimality equation} for the finite-horizon VaR MDP \eqref{eq-objectivefunction-FH}.

\subsection{Nested VaR Dynamic Programming Formulation}
Lemma \ref{lemma:QuantileTransform} suggests that we can write the VaR dynamic programming \eqref{eq-Bellman-FH} in the form of a uniform random variable.
To see this, for any $v: \hat{\bfX} \to \bar{\mbR}$, $t\in [T-1]$, $\gamma \in (0,1]$, define the operator:
\begin{equation}\label{eq-OperatorBt}
\begin{aligned}
    (B_t v)(x,\alpha,a) :=  r_t(x,a) + \gamma \VaR_{\alpha}^{X'\sim Q(\cdot|x,a)}[v(X',U)],\quad \forall (x,\alpha) \in \hat{\bfX}, t\in [T-1],
\end{aligned}
\end{equation}
where $U\sim \mathrm{Unif}[0,1]$, independent of $X'$, and we use the superscript $X'\sim Q(\cdot|x,a)$ over $\VaR_\alpha$ to specify the distribution of $X'$. The equivalency of $L_t$ and $B_t$ will be shown in Lemma \ref{lemma:msLtmsBtProperties}. Define its maximum reward operator as
\begin{equation}\label{eq-OperatormsBt}
    (\msB_t v)(x,\alpha) := \sup_{a\in\bfA(x)} (B_t v)(x,\alpha,a),\quad \forall (x,\alpha) \in \hat{\bfX}.
\end{equation}
Then define recursively: for all $(x,\alpha) \in \bfX\times[0,1)$,
\begin{equation}\label{eq-BellmanNested-FH}
\begin{aligned}
    v^u_T(x,\alpha) &:= g(x,\alpha),\\
    v^u_t(x,\alpha) &:= (\msB_t v_{t+1}^u)(x,\alpha), \quad \forall t\in [T-1],
\end{aligned}
\end{equation}
and $v^u_t(x,1):=\infty, \forall x\in \bfX$, $t\in [T]$. 
As shown in \cite{Hau2025aistats}, this gives us a form of nested VaR, where the risk levels for subsequent $v_k^u$ at $t < k \le T-1$ are independently and uniformly drawn from $[0,1]$. 

In the following subsection, we establish that, under appropriate assumptions, \eqref{eq-BellmanNested-FH} is equivalent to \eqref{eq-Bellman-FH}, and that both characterize the value function defined in \eqref{eq-hatvpi}. Consequently, the optimal value of the finite-horizon VaR MDP in \eqref{eq-OptimalObjective-FH} can be computed using either dynamic programming formulation \eqref{eq-Bellman-FH} or \eqref{eq-BellmanNested-FH}.

\subsection{Optimality Result}

We impose the following compactness and continuity assumptions on the MDP. These are common assumptions for analysis of MDPs in general spaces. See Section 2.4 in \cite{Bauerle2011book}.

\begin{assump}\label{assump:CompactContinuity}
    \begin{enumerate}[label=(\roman*)]
        \item For each $x\in \bfX$, $\bfA(x)$ is compact, and the multi-function $x \mapsto \bfA(x)$ is upper semicontinuous (u.s.c.). Equivalently, whenever $x_n \to x$ and $a_n \in \bfA(x_n)$, $n\in \mbN$, the sequence $\{a_n\}_{n\in \mbN}$ has a convergent subsequence with limit in $\bfA(x)$.
        
        \item For each $t \in [T-1]$, $r_t: \bfK \to \mbR$ is upper semicontinuous (u.s.c.). That is, for all sequence $\{(x_n,a_n)\}_{n\in\mbN}\subset \bfK$ with $\lim_{n\to\infty}(x_n,a_n) = (x,a) \in \bfK$, it holds
        \[
        \limsup_{n\to\infty} r_t(x_n,a_n) \le r_t(x,a), \quad \forall t\in [T-1].
        \]

        \item The stochastic kernel $Q$ is weakly continuous on $\bfK$. That is, the mapping $(x,a) \mapsto \int_{\bfX} v(y) Q(dy|x,a)$ is continuous on $\bfK$ for every $v \in \mcC_b(\bfX)$.
    \end{enumerate}
\end{assump}

Let $\mcU_u(\hat{\bfX})$ denote the collection of all functions $v\in\mcU(\hat{\bfX})$ such that $v$ is u.s.c. on $\hat{\bfX}$.

We first show the following properties of the operators $L_t$ and $\msL_t$ regarding functions in $\mcU_u(\hat{\bfX})$.

\begin{lemma}\label{lemma:msLtmsBtProperties}
    Under Assumption \ref{assump:CompactContinuity}, for every $t \in [T-1]$, the following results hold:
    \begin{enumerate}[label = (\roman*)]
        \item $L_t v = B_t v$ and hence $\msL_t v = \msB_t v$ for any $v \in \mcU_u(\hat{\bfX})$.
        \item The mappings $(x,\alpha,a) \mapsto (L_t v)(x,\alpha,a)$ and $(x,\alpha,a) \mapsto (B_t v)(x,\alpha,a)$ are u.s.c. on $\hat{\bfK}$, for any $v \in \mcU_u(\hat{\bfX})$.
        \item $\msL_t, \msB_t$ map $\mcU_u(\hat{\bfX})$ to $\mcU_u(\hat{\bfX})$.
        \item For any fixed $v \in \mcU_u(\hat{\bfX})$, $\gamma \in (0,1]$, there exists a measurable selector $\hat{f}_t \in \hat{\mbF}$ that attains the supremum in \eqref{eq-OperatormsLt}. Specifically, for any $(x,\alpha)\in\hat{\bfX}$, it holds that
        \begin{equation}\label{eq-OperatormsLttgMeasurableSelector}
            (\msL_t v)(x,\alpha) = r_t(x,\hat{f}_t(x,\alpha)) + \gamma \sup_{\zeta \in \mcZ_\alpha(Q(\cdot|x,\hat{f}_t(x,\alpha)))} \inf_{x'\in \bfX} v(x', \zeta(x')).
        \end{equation}
    \end{enumerate}
\end{lemma}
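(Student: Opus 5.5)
The plan is to prove the four items in order: (i) from Lemma \ref{lemma:QuantileTransform}, (ii) by a direct closed-superlevel-set argument, and (iii)--(iv) from standard measurable-selection results for u.s.c.\ functions over compact-valued u.s.c.\ multifunctions (Section 2.4 of \cite{Bauerle2011book}).

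\textbf{Item (i).} Fix $(x,a)\in\bfK$ and let $X'\sim Q(\cdot|x,a)$ be independent of $U\sim\mathrm{Unif}[0,1]$ on an atomless space. Since $v\in\mcU_u(\hat{\bfX})\subset\mcU(\hat{\bfX})$, Lemma \ref{lemma:QuantileTransform}(iii) applies with $E=\bfX$, $\nu=Q(\cdot|x,a)$ and $\mcY=\bfX\supseteq\supp(\nu)$. It gives
\[
\VaR_\alpha[v(X',U)]=\sup_{\zeta\in\mcZ_\alpha(Q(\cdot|x,a))}\inf_{x'\in\bfX}v(x',\zeta(x')).
\]
Multiplying by $\gamma$ and adding $r_t(x,a)$ shows $L_tv=B_tv$ for $\alpha<1$. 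At $\alpha=1$ both operators equal $+\infty$ by convention. Taking the supremum over $a\in\bfA(x)$ then gives $\msL_tv=\msB_tv$.

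\textbf{Item (ii).} This is the main obstacle. Since $r_t$ is u.s.c.\ and finite, it suffices to show that
\[
\Phi(x,\alpha,a):=\VaR_\alpha^{X'\sim Q(\cdot|x,a)}[v(X',U)]
\]
is u.s.c.\ on $\hat{\bfK}$; adding a finite u.s.c.\ function to a $\bar{\mbR}$-valued u.s.c.\ function stays u.s.c.\ even where $\Phi=-\infty$ at $\alpha=0$. By the definition of VaR and left-continuity of $z\mapsto\mbP[Z<z]$, for $c\in\mbR$ and $\alpha<1$,
\[
\Phi(x,\alpha,a)\ge c \iff F_{c'}(x,a):=(Q(\cdot|x,a)\otimes\bar\lambda)\big(\{(x',u):v(x',u)<c'\}\big)\le\alpha \quad\text{for all rational } c'<c .
\]
Because $v$ is u.s.c., the set $\{v<c'\}$ is open in $\hat{\bfX}$. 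Weak continuity of $Q$ gives weak continuity of $(x,a)\mapsto Q(\cdot|x,a)\otimes\bar\lambda$, for instance by testing against products of bounded continuous functions, which form a convergence-determining class. The Portmanteau theorem then makes each $F_{c'}$ lower semicontinuous. Hence each set $\{F_{c'}(x,a)-\alpha\le0\}$ is closed, and so is their countable intersection. Points with $\alpha=1$, where $\Phi=+\infty$, are automatically included, and limits of points with $\alpha_n\uparrow1$ land in this set. Therefore $\{\Phi\ge c\}$ is closed, and $\Phi$ is u.s.c. The same conclusion for $B_tv$ follows from (i).

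\textbf{Item (iii).} Because $\bfA(x)$ does not depend on $\alpha$, the multifunction $(x,\alpha)\mapsto\bfA(x)$ is compact-valued and u.s.c.\ on $\hat{\bfX}$. The supremum of an u.s.c.\ function over such a multifunction is u.s.c.\ and attained (Proposition 2.4.3 in \cite{Bauerle2011book}), so $\msL_tv$ is u.s.c.\ and in particular measurable, which gives (A1). I then check the remaining conditions of $\mcU(\hat{\bfX})$:
\begin{itemize}
\item (A2) Monotonicity in $\alpha$ holds because $\mcZ_\alpha$ grows with $\alpha$. A nondecreasing u.s.c.\ function of $\alpha$ is right-continuous.
\item (A3)--(A4) For $u<1$ we have $v(x',u)<\infty$, so $v(X',U)$ is a.s.\ real-valued. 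Hence $\VaR_\alpha\in\mbR$ for $\alpha\in(0,1)$ and $\VaR_0<\infty$. Since the supremum over $a$ is attained on the compact set $\bfA(x)$, finiteness is preserved.
\item (A5) Holds by the convention at $\alpha=1$.
\end{itemize}

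\textbf{Item (iv).} The existence of a measurable maximizer $\hat f_t\in\hat{\mbF}$ follows from the measurable selection theorem for u.s.c.\ functions on a compact-valued u.s.c.\ constraint multifunction (Theorem 2.4.6 in \cite{Bauerle2011book}), applied on $\hat{\bfK}$ to $L_tv$, which is u.s.c.\ by (ii). Substituting $\hat f_t$ into the definition of $L_t$ then yields \eqref{eq-OperatormsLttgMeasurableSelector}.
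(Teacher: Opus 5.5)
Your proposal is correct and follows essentially the same route as the paper. Item (i) comes from Lemma~\ref{lemma:QuantileTransform}(iii) with $\mcY=\bfX$, and item (ii) from the Portmanteau theorem applied to the open sublevel set $\{v<c\}$ under the weakly continuous product kernel $Q\otimes\bar\lambda$. Items (iii)--(iv) come from compactness and upper semicontinuity of $x\mapsto\bfA(x)$ together with the measurable selection theorem in \cite{Bauerle2011book}. The differences are cosmetic: you phrase (ii) as closedness of the superlevel sets $\{\Phi\ge c\}$ rather than by the paper's sequential contradiction argument, and in (iii) you cite a Berge-type maximum theorem where the paper proves it inline, while justifying (A3)--(A4) slightly more explicitly through attainment of the supremum.
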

\begin{proof}
    (i) When $\alpha = 1$, we have $(L_t v)(x,1,a) = +\infty$ and $(B_t v)(x,1,a) = +\infty$ for any $v\in \mcU_u(\hat{\bfX})$ and $(x,a)\in\bfK$. So the result holds. Now suppose $(x,\alpha) \in \bfX\times[0,1)$.
   By Lemma \ref{lemma:QuantileTransform}, for $v\in\mcU_u(\hat{\bfX})$, we have
    \begin{align*}
        (L_t v)(x,\alpha,a)
        &=r_t(x,a) + \gamma \sup_{\zeta\in \mcZ_\alpha(Q(\cdot|x,a))} \inf_{x'\in\bfX} v(x',\zeta(x'))\\
        &=r_t(x,a) + \gamma \VaR_\alpha^{X'\sim Q(\cdot|x,a)}[v(x',U)] = (B_t v)(x,\alpha,a),\quad \forall t \in[T-1],
    \end{align*}
    where $U\sim \mathrm{Unif}[0,1]$, independent of $X'$. Taking the supremum over $a\in \bfA(x)$ yields the desired result.

    (ii) For any $v \in \mcU_u(\hat{\bfX})$, define
    \begin{equation}\label{eq-Fg}
        F_v(x,\alpha,a) := \VaR_\alpha^{X'\sim Q(\cdot|x,a)}[v(X',U)].
    \end{equation}
    Following (i) and Assumption \ref{assump:CompactContinuity}, it suffices to show that $F_v$ is u.s.c. on $\hat{\bfK}$.
    Hence, for $\{(x_n,\alpha_n,a_n)\}_{n\in\mbN}\subset\hat{\bfK}$ such that $(x_n,\alpha_n,a_n) \to (x,\alpha,a) \in \hat{\bfK}$, we must show that $\limsup_{n\to\infty} F_v(x_n,\alpha_n,a_n)\le F_v(x,\alpha,a)$. When $\alpha = 1$, the claim holds trivially. In the following, we set $\alpha \in [0,1)$.
    
    Let $\lambda$ denote the Lebesgue measure on $[0,1]$. By Assumption \ref{assump:CompactContinuity}(iii) and Theorem 2.8(ii) in \cite{Billingsley1999book}, the product kernel $Q\otimes \lambda$ is also a weakly continuous kernel on $\hat{\bfX}$ given $\hat{\bfK}$.
    Denote
    \[
        \mu_n := Q(\cdot|x_n,a_n) \otimes \lambda,\quad \mu := Q(\cdot|x,a) \otimes \lambda.
    \]
    Then $\mu_n$ converges weakly to $\mu$. Suppose $F_v$ is not u.s.c., and consider $F_v(x,\alpha,a) \in \mbR$. Then there exists a constant $\epsilon > 0$ and a subsequence denoted as $\{(x_n,\alpha_n,a_n)\}_{n\in\mbN}$ without loss of generality that $F_v(x_n,\alpha_n,a_n) > F_v(x,\alpha,a) + \epsilon$. Let $c:= F_v(x,\alpha,a) + \epsilon$. By the definition of VaR and \eqref{eq-Fg}, we have
    \begin{equation}\label{eq-muset}
        \mu(\{(y,u): v(y,u) < c\}) > \alpha.
    \end{equation}
    Meanwhile, from the hypothesis, for sufficiently large $n$, we have $F_v(x_n,\alpha_n,a_n)  > c$, which yields
    \begin{equation}\label{eq-munset}
        \mu_n(\{(y,u): v(y,u) < c\}) \le \alpha_n.
    \end{equation}
    Since $v$ is u.s.c., the sub-level set $G_c:=\{(y,u) : v(y,u) < c\}$ is open. By Portmanteau theorem (Theorem A.43 in \cite{Follmer2016book}), \eqref{eq-muset} and \eqref{eq-munset}, we conclude that
    \[
        \liminf_{n\to\infty}\alpha_n \ge \liminf_{n\to\infty} \mu_n(G_c) \ge \mu(G_c) > \alpha.
    \]
    Since $\alpha_n \to \alpha$, this gives a contradiction. If $F_v(x,\alpha,a) = -\infty$, then take arbitrary $c \in \mbR$, we have $\mu(G_c) > \alpha$, and following a similar argument as above and Portmanteau theorem, for sufficiently large $n$, we have $\mu_n(G_c) > \alpha_n$ and equivalently, $F_v(x_n,\alpha_n,a_n) < c$. Since $c \in \mbR$ is arbitrary, we conclude that $\limsup_{n\to\infty} F_v(x_n,\alpha_n,a_n) = -\infty$. Therefore, $F_v$ is u.s.c. on $\hat{\bfK}$.

    (iii) For any $v \in \mcU_u(\hat{\bfX})$, define $h_t(x,\alpha,a) := (B_t v)(x,\alpha,a)$ and $H_t(x,\alpha) := \sup_{a\in\bfA(x)} h_t(x,\alpha,a)$. Take any sequence $\{(x_n,\alpha_n)\}_{n\in\mbN}$ such that $(x_n,\alpha_n) \to (x,\alpha) \in \hat{\bfX}$. Let $L := \limsup_{n\to\infty} H_t(x_n,\alpha_n)$. If $L = -\infty$, then clearly $L \le H_t(x,\alpha)$. By definition, when $\alpha = 1$, we also have $L \le H_t(x,1) = +\infty$. It remains to consider the case $\alpha \in [0,1)$ and $L \in \mbR\cup\{+\infty\}$. Take any subsequence $\{n_k\}_{k\in\mbN}$ such that $H_t(x_{n_k},\alpha_{n_k}) \to L$. For each $k$, since $\bfA(x_{n_k})$ is compact and by (ii), $a\mapsto h_t(x_{n_k},\alpha_{n_k},a)$ is u.s.c., we can take $a_{n_k} \in \bfA(x_{n_k})$ such that $H_t(x_{n_k},\alpha_{n_k}) = h_t(x_{n_k},\alpha_{n_k},a_{n_k})$. Furthermore, by Assumption \ref{assump:CompactContinuity}, there exists a subsequence, which we still denoted as $\{n_k\}_{k\in\mbN}$ w.l.o.g., and $a\in\bfA(x)$, such that $a_{n_k} \to a$ as $k\to\infty$. Therefore, we have $(x_{n_k},\alpha_{n_k},a_{n_k}) \to (x,\alpha,a)$, and by (ii),
    \begin{align*}
        L = \lim_{k\to\infty} H_t(x_{n_k},\alpha_{n_k}) &= \lim_{k\to\infty} h_t(x_{n_k},\alpha_{n_k},a_{n_k}) 
        \le \limsup_{k\to\infty} h_t(x_{n_k},\alpha_{n_k},a_{n_k}) \\
        &\le h_t(x,\alpha,a) \le \sup_{\bar{a}\in\bfA(x)} h_t(x,\alpha,\bar{a}) = H_t(x,\alpha).
    \end{align*}
    Since $h_t(x,\alpha,a) < + \infty$ for $\alpha \in [0,1)$, this also rules out $L = +\infty$.
    Therefore, $H_t$ is u.s.c.
    Meanwhile, by the definition of VaR, $\alpha\mapsto (\msB_t v)(x,\alpha)$ is non-decreasing; by Lemma \ref{lemma:UscNondecreasingisRightcontinuous}, $\alpha\mapsto H_t(x,\alpha)$ is right-continuous; $(\msB_t v)(x,0)\in [-\infty,\infty)$; $(\msB_t v)(x,1) = +\infty$ for any $x\in\bfX$ and $(\msB_t v)(x,\alpha) \in \mbR$ for $(x,\alpha)\in\bfX\times(0,1)$. Therefore, $\msB_t v \in \mcU_u(\hat{\bfX})$.
    The rest of result then follows from (i).

    (iv) Fix $(x,\alpha) \in \bfX\times[0,1)$ and $t \in [T-1]$. 
    By definition and Assumption \ref{assump:CompactContinuity}, $\bfA(x)$ is nonempty and compact, by (ii) $(x,\alpha,a)\mapsto (L_t v)(x,\alpha,a)$ is u.s.c. for any $v\in\mcU_u(\hat{\bfX})$. Hence $L_t v$ is jointly measurable. By definition, $\hat{\bfK}$ is also a Borel set. Hence by the measurable selection theorem (Theorem A.2.4 in \cite{Bauerle2011book}), 
    there exists a measurable selector $\hat{f}_t: \hat{\bfX} \to \bfA$ such that for any $(x,\alpha) \in \bfX\times[0,1)$, we have $\hat{f}_t(x,\alpha) \in \bfA(x)$, and
    \begin{align*}
        (\msL_t v)(x,\alpha) = r_t(x,\hat{f}_t(x,\alpha)) + \gamma \sup_{\zeta\in\mcZ_\alpha(Q(\cdot|x,\hat{f}_t(x,\alpha)))} \inf_{x'\in \bfX} v(x',\zeta(x')).
    \end{align*}
    For $\alpha = 1$, we define $\hat{f}_t(x,1)$ as an arbitrary fixed selector $f\in \mbF$. Consequently, $\hat{f}_t \in \hat{\mbF}$.
    This completes the proof.
\end{proof}

Next we show several properties of the value functions $v_t$ and $v_t^u$ defined through \eqref{eq-Bellman-FH} and \eqref{eq-BellmanNested-FH} through the following lemma.

\begin{lemma}\label{lemma:vtvtuProperties}
    Suppose Assumption \ref{assump:CompactContinuity} hold and $g \in \mcU_u(\hat{\bfX})$. For each $t\in[T-1]$, let $v_t$ and $v_t^u$ be defined by \eqref{eq-Bellman-FH} and \eqref{eq-BellmanNested-FH}, respectively. Then we have
    \begin{enumerate}[label = (\roman*)]
        \item $v_t, v_t^u \in \mcU_u(\hat{\bfX})$. 
        \item $v_t = v_t^u$.
    \end{enumerate}
\end{lemma}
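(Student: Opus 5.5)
The plan is a backward induction on $t$, from $t=T$ down to $t=0$, proving (i) and (ii) together at each step. The induction hypothesis at stage $t+1$ is that $v_{t+1} = v_{t+1}^u \in \mcU_u(\hat{\bfX})$.

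\emph{Base case.} At $t=T$ we have $v_T(x,\alpha) = g(x,\alpha) = v_T^u(x,\alpha)$ for $\alpha\in[0,1)$, and $v_T(x,1) = v_T^u(x,1) = +\infty$. So $v_T = v_T^u$ agree with $g$ on $\bfX\times[0,1)$ and equal $+\infty$ at $\alpha=1$. Since $g\in\mcU_u(\hat{\bfX})$, property (A5) gives $g(x,1)=+\infty$. Hence $v_T = v_T^u = g \in \mcU_u(\hat{\bfX})$ on all of $\hat{\bfX}$.

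\emph{Inductive step.} Assume $v_{t+1} = v_{t+1}^u \in \mcU_u(\hat{\bfX})$ for some $t\in[T-1]$.
\begin{itemize}
\item Lemma \ref{lemma:msLtmsBtProperties}(i) gives $\msL_t v_{t+1} = \msB_t v_{t+1} = \msB_t v_{t+1}^u$. Hence $v_t = v_t^u$ on $\bfX\times[0,1)$.
\item At $\alpha=1$, both $v_t$ and $v_t^u$ are $+\infty$ by convention, which matches $(\msL_t v_{t+1})(x,1)=+\infty$ since $(L_t v)(x,1,a):=\infty$. So the recursive definitions coincide with $\msL_t v_{t+1}$ on all of $\hat{\bfX}$.
\item Lemma \ref{lemma:msLtmsBtProperties}(iii) then gives $v_t = \msL_t v_{t+1}\in\mcU_u(\hat{\bfX})$, and likewise $v_t^u = \msB_t v_{t+1}^u \in \mcU_u(\hat{\bfX})$.
\end{itemize}
This closes the induction and yields both (i) and (ii) for every $t\in[T-1]$.

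The only point needing care is bookkeeping at $\alpha=1$. The recursions \eqref{eq-Bellman-FH} and \eqref{eq-BellmanNested-FH} are stated only on $\bfX\times[0,1)$, with the value at $\alpha=1$ imposed separately. We must therefore check that the functions so defined are exactly $\msL_t v_{t+1}$ and $\msB_t v_{t+1}^u$ as maps on $\hat{\bfX}$, so that Lemma \ref{lemma:msLtmsBtProperties}(iii) applies verbatim. This holds because both operators return $+\infty$ at $\alpha=1$, as noted above; otherwise the argument is immediate from the preceding lemma.
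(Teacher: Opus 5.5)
Your proposal is correct and follows essentially the same route as the paper: a backward induction that uses Lemma~\ref{lemma:msLtmsBtProperties}(i) to identify $\msL_t v_{t+1}$ with $\msB_t v_{t+1}^u$, and Lemma~\ref{lemma:msLtmsBtProperties}(iii) to propagate membership in $\mcU_u(\hat{\bfX})$. The differences are cosmetic: you run (i) and (ii) as a single joint induction, and you spell out the $\alpha=1$ bookkeeping that the paper handles by convention.
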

\begin{proof}
    (i) Clearly $v_T = g \in \mcU_u(\hat{\bfX})$. By Lemma~\ref{lemma:msLtmsBtProperties} (iii), $v_{T-1} = \msL_{T-1} v_T \in \mcU_u(\hat{\bfX})$. Iterating this argument yields $v_t \in \mcU_u(\hat{\bfX})$ for all $t\in [T-1]$. Since $v_T^u = v_T$, Lemma~\ref{lemma:msLtmsBtProperties}(i)(iii) further implies $v_t^u \in \mcU_u(\hat{\bfX})$.
    
    (ii) For $t = T$, by definition we have $v_T = v_T^u$.
    We prove by induction. Suppose $v_{t+1} = v_{t+1}^u$ for some $t \in [T-1]$. Then for $(x,\alpha) \in \bfX\times[0,1)$, we have
    \begin{align*}
        v_t(x,\alpha) = (\msL_t v_{t+1})(x,\alpha) = (\msB_t v_{t+1})(x,\alpha) = (\msB_t v_{t+1}^u)(x,\alpha) = v_t^u(x,\alpha),
    \end{align*}
    where the second equality is from Lemma~\ref{lemma:msLtmsBtProperties} (i) and the third equality is from the induction hypothesis.
    For $\alpha = 1$, we have $v_t(x,\alpha) = v_t^u(x,\alpha) = +\infty$, for all $x\in\bfX$. Hence we conclude that $v_t = v_t^u$ for all $t\in [T]$. This completes the proof.
\end{proof}

We are now ready to present our main theorem.
We show that solving \eqref{eq-Bellman-FH} indeed yields a solution to the VaR objective problem \eqref{eq-OptimalObjective-FH}. This is achieved by establishing the equivalence between \eqref{eq-hatvpi} and \eqref{eq-Bellman-FH}.

\begin{thm}\label{thm:VaRDPTheorem}
    Suppose Assumption \ref{assump:CompactContinuity} hold, $g\in\mcU_u(\hat{\bfX})$, and fix a finite horizon $T \in \mbN_+$. For $t\in [T-1]$, let $\hat{v}_t$, $v_t$ and $v_t^u$ be defined by \eqref{eq-hatv}, \eqref{eq-Bellman-FH} and \eqref{eq-BellmanNested-FH}, respectively. Then, we have
    \[
    \hat{v}_t (x,\alpha) = v_t(x,\alpha) = v_t^u(x,\alpha), \quad \forall (x,\alpha) \in \hat{\bfX}, t\in [T].
    \]
    Consequently, for any fixed $\alpha \in [0,1)$,
    \[
    v_0(x,\alpha) = v_0^u(x,\alpha) = J_{\alpha,T}^*(x),\quad \forall x\in \bfX.
    \]
\end{thm}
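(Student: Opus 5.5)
We argue by backward induction on $t$, from $t=T$ down to $t=0$. We prove $\hat v_t=v_t$; the equality $v_t=v_t^u$ is already Lemma~\ref{lemma:vtvtuProperties}(ii). To make the lower bound go through, we strengthen the induction hypothesis. At each stage $t$ we also assert that there is a policy $\pi^*_{t:T-1}$, built from measurable selectors $\hat f_k\in\hat{\mbF}$ on the augmented state $(x_k,\alpha_k)$ together with measurable risk-level updates $\alpha_{k+1}=\zeta_k(x_k,\alpha_k,x_{k+1})$, such that $\hat v_{t,\pi^*_{t:T-1}}(x,\alpha)=v_t(x,\alpha)$ for every $(x,\alpha)$. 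The case $\alpha=1$ is trivial throughout, since both sides equal $+\infty$. For the base case $t=T$, Lemma~\ref{lemma:QuantileTransform}(ii) applied to $g\in\mcU(\hat{\bfX})$ gives $\hat v_T(x,\alpha)=\VaR_\alpha[g(x,U_0)]=g(x,\alpha)=v_T(x,\alpha)$, and the empty policy attains it.

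\emph{Upper bound $\hat v_t\le v_t$.} Fix $(x,\alpha)$ with $\alpha<1$ and an arbitrary $\pi_{t:T-1}$. Write the reward-to-go as $Z=r_t(x,A_0)+\gamma W$, where $W$ is the discounted reward from the next period on. We apply Theorem~\ref{thm:VaRDecomposition} with conditioning variable $Y=(A_0,X_1)$, whose law is $\nu(da,dx')=\varpi_{t:t}(da|x)Q(dx'|x,a)$. The Ionescu--Tulcea construction \eqref{eq-IonescuTulceaTheorem} supplies an explicit version of the regular conditional law. Given $Y=(a,x')$, the variable $W$ is the reward-to-go of the shifted policy $\pi^{a,x'}\in\Pi_{t+1:T-1}$ started at $x'$. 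Hence
\[
\VaR_{\zeta}[Z\mid Y=(a,x')]=r_t(x,a)+\gamma\,\hat v_{t+1,\pi^{a,x'}}(x',\zeta)\le r_t(x,a)+\gamma\, v_{t+1}(x',\zeta),
\]
where the inequality uses the induction hypothesis. Now fix any $\zeta\in\mcZ_\alpha(\nu)$ and set $\beta(a):=\int\zeta(a,x')Q(dx'|x,a)$. Since $\int\beta\,d\varpi_{t:t}\le\alpha$, there is some $a$ in the support with $\beta(a)\le\alpha$. For that $a$, the function $\zeta(a,\cdot)$ belongs to $\mcZ_\alpha(Q(\cdot|x,a))$. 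Bounding the infimum over $y$ by the infimum over $x'$ at this fixed $a$ then gives at most $(L_tv_{t+1})(x,\alpha,a)\le v_t(x,\alpha)$. Some care is needed to keep $a$ inside a set of positive $\varpi_{t:t}$-measure meeting $\supp(\nu)$; we handle this by choosing $a$ in $\{\beta\le\alpha\}$, which has positive measure. Taking the supremum over $\zeta$ and then over $\pi_{t:T-1}$ gives the upper bound.

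\emph{Lower bound and attainment.} Let $\hat f_t$ be the selector from Lemma~\ref{lemma:msLtmsBtProperties}(iv) applied to $v_{t+1}\in\mcU_u(\hat{\bfX})$, and write $a^*=\hat f_t(x,\alpha)$. By Lemma~\ref{lemma:msLtmsBtProperties}(i), $(L_tv_{t+1})(x,\alpha,a^*)=r_t+\gamma c$ with $c=\VaR_\alpha^{X'\sim Q(\cdot|x,a^*)}[v_{t+1}(X',U)]$. By Lemma~\ref{lemma:QuantileTransform}(iii) and the attainment clause of Theorem~\ref{thm:VaRDecomposition}, a maximizing risk allocation is $\zeta_t(x,\alpha,x'):=\lambda\{u:v_{t+1}(x',u)<c(x,\alpha)\}$, computed with the explicit kernel \eqref{eq-kernelK}. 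This map is jointly measurable in $(x,\alpha,x')$: $v_{t+1}$ is jointly measurable by Lemma~\ref{lemma:QuantileTransform}(i), and $c(x,\alpha)=F_{v_{t+1}}(x,\alpha,\hat f_t(x,\alpha))$ is measurable since $F_{v_{t+1}}$ is u.s.c. by Lemma~\ref{lemma:msLtmsBtProperties}(ii). We then define $\pi^*_{t:T-1}$ as follows: play $a^*$ at time $t$, then from $x_1$ follow $\pi^*_{t+1:T-1}$ started at the augmented state $(x_1,\zeta_t(x,\alpha,x_1))$. Since the initial $(x,\alpha)$ is fixed, this is a legitimate history-dependent policy. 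Applying Theorem~\ref{thm:VaRDecomposition} with $Y=X_1$ and $\zeta=\zeta_t(x,\alpha,\cdot)$, together with the induction hypothesis that $\pi^*_{t+1:T-1}$ attains $v_{t+1}$, yields
\[
\hat v_{t,\pi^*}(x,\alpha)\ge r_t(x,a^*)+\gamma\inf_{x'}v_{t+1}(x',\zeta_t(x,\alpha,x'))=v_t(x,\alpha).
\]
Combined with the upper bound, this closes the induction.

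The step I expect to be the main obstacle is this lower bound. Its difficulty is keeping the conditional VaR of the constructed policy tied to $v_{t+1}(x',\zeta_t(x'))$ \emph{for every} $x'$ in the set $\mcY$, not just $\nu$-almost everywhere. Theorem~\ref{thm:VaRDecomposition} takes an infimum over $\mcY\supseteq\supp(\nu)$, and a bad version of the conditional law could spoil that infimum. My first approach is to take $\mcY=\supp(Q(\cdot|x,a^*))$ and to use the canonical Ionescu--Tulcea version of the conditional law, under which the conditional distribution given $X_1=x'$ is exactly $\mbP^{\pi^*_{t+1:T-1}}_{x'}$ for every $x'$, not only almost everywhere. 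The conclusion $v_0=v_0^u=J^*_{\alpha,T}$ then follows from $J^*_{\alpha,T}(x)=\hat v_0(x,\alpha)$.
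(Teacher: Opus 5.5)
Your proposal is correct and follows essentially the paper's proof. Both arguments run a backward induction in which the upper bound comes from Theorem~\ref{thm:VaRDecomposition} plus the induction hypothesis, and the lower bound comes from the greedy selector $\hat f_t$, a pointwise risk-level update and concatenated continuation policies, with conditional laws taken in their Ionescu--Tulcea versions.

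The differences are cosmetic. In the upper bound you condition jointly on $(A_0,X_1)$ and pick an action with $\int\zeta(a,x')\,Q(dx'|x,a)\le\alpha$, where the paper first applies Lemma~\ref{lemma:ConditionalVaRInequaltiy}. Your update $\lambda\{u:v_{t+1}(x',u)<c(x,\alpha)\}$ is exactly the paper's $\mfa_{t+1}(x,\alpha,x')=\min D_{t+1}(x,\alpha,x')$, because $v_{t+1}(x',\cdot)$ is non-decreasing and right-continuous.
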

\begin{proof}
    The assertion trivially holds for $\alpha = 1$, since all the relevant value functions are defined to be $+\infty$. 

    We prove the result by backward induction. For every $t \in [T]$, we establish both: for all $(x,\alpha) \in \hat{\bfX}$,
    \begin{equation}\label{eq-hatvtpilevt}
        \hat{v}_{t,\pi_{t:T-1}}(x,\alpha) \le v_t(x,\alpha),\quad \forall \pi_{t:T-1} \in \Pi_{t:T-1},
    \end{equation}
    and the existence of an admissible policy $\pi_{t:T-1}^\alpha \in \Pi_{t:T-1}$ satisfying
    \begin{equation}\label{eq-hatvtpialphagevt}
        \hat{v}_{t,\pi_{t:T-1}^\alpha}(x,\alpha) \ge v_t(x,\alpha).
    \end{equation}
    Together, these imply $\hat{v}_t(x,\alpha) = v_t(x,\alpha)$, for all $(x,\alpha) \in \hat{\bfX}$. The result then follows from Lemma \ref{lemma:vtvtuProperties} and \eqref{eq-hatv} that $v_0(x,\alpha) = v_0^u(x,\alpha) = \hat{v}_0(x,\alpha) = J_{\alpha,T}^*(x)$ for all $(x,\alpha) \in\bfX\times[0,1)$. 

    Throughout this proof, conditional laws of future trajectories are taken to be the versions generated by the transition kernel and the corresponding continuation policies through \eqref{eq-IonescuTulceaTheorem}. These are Borel probability kernels and Theorem \ref{thm:VaRDecomposition} therefore applies to these versions.

    \textbf{Step 1:} We begin by introducing auxiliary sets, functions, and policies that will be used in the proof.
    
    Fix $(x,\alpha) \in \bfX\times[0,1)$. By Lemmas \ref{lemma:msLtmsBtProperties}(iv) and \ref{lemma:vtvtuProperties}, for each $t\in [T-1]$, there exists a selector $\hat{f}_t\in\hat{\mbF}$ such that $\hat{f}_t(x,\alpha) \in \bfA(x)$ and
    \begin{equation}\label{eq-vt=rt+gammasupinfvt+1}
        v_t(x,\alpha) = r_t(x,\hat{f}_t(x,\alpha)) + \gamma \sup_{\zeta\in\mcZ_\alpha(Q(\cdot|x,\hat{f}_t(x,\alpha)))} \inf_{x'\in\bfX} v_{t+1}(x',\zeta(x')).
    \end{equation}
    By Theorem \ref{thm:VaRDecomposition}, the supremum in \eqref{eq-vt=rt+gammasupinfvt+1} is attained by some 
    \begin{equation*}
        \bar{\zeta}_{t,x,\alpha}\in \mcZ_\alpha(Q(\cdot|x,\hat{f}_t(x,\alpha))),
    \end{equation*}
    Consequently, for $T\ge 2$, $t\in[T-2]$, and every $x'\in\bfX$,
    \begin{align*}
        \gamma v_{t+1}(x',\bar{\zeta}_{t,x,\alpha}(x')) \ge v_t(x,\alpha) - r_t(x,\hat{f}_t(x,\alpha)),
    \end{align*}
    and the set
    \[
    D_{t+1}(x,\alpha,x') := \left\{ \bar{\alpha} \in [0,1]: \gamma v_{t+1}(x',\bar{\alpha}) \ge v_{t}(x,\alpha) - r_t(x,\hat{f}_t(x,\alpha))\right\},
    \]
    is nonempty. Define
    \begin{equation*}
        \mfa_{t+1}(x,\alpha,x') := \min D_{t+1}(x,\alpha,x'),
    \end{equation*}
    and set $\mfa_{t+1}(x,1,x') := 1$.
    Since $\bar{\zeta}_{t,x,\alpha}(x') \in D_{t+1}(x,\alpha,x')$, we have $\mfa_{t+1}(x,\alpha,x') \le \bar{\zeta}_{t,x,\alpha}(x')$, for all $x'\in\bfX$. It follows that $\int_{\bfX} \mfa_{t+1}(x,\alpha,x') Q(dx'|x,\hat{f}_t(x,\alpha)) \le \alpha$, and hence 
    \begin{equation}\label{eq-mfat+1inmcZalpha}
        \mfa_{t+1}(x,\alpha,\cdot) \in \mcZ_\alpha(Q(\cdot|x,\hat{f}_t(x,\alpha))).
    \end{equation}

    For $s \in [T-1]$, define $\rho_{s:s}^\alpha(h_{s:s}) := \alpha$ and for $s \in [T-2]$, define recursively
    \begin{equation}\label{eq-rhostalpha}
    \begin{aligned}
        \rho_{s:t+1}^\alpha(h_{s:t+1}) := \mfa_{t+1}(x_t,\rho_{s:t}^\alpha(h_{s:t}),x_{t+1}),\quad t=s,\cdots,T-2,
    \end{aligned}
    \end{equation}
    for $h_{s:t+1} = (x_0,a_0,\cdots,x_t,a_t,x_{t+1}) \in \bfH_{s:t+1}$.
    Then following Lemma \ref{lemma:NewPolicyMeasurable}, the stochastic kernels
    \begin{equation}\label{eq-varpistalpha}
        \varpi_{s:t}^\alpha(\cdot|h_{s:t}) := \delta_{\hat{f}_t(x_t,\rho_{s:t}^\alpha(h_{s:t}))}(\cdot),\quad t=s,\cdots,T-1,
    \end{equation}
    define an admissible history-dependent policy
    \begin{equation}\label{eq-pisT-1alpha}
        \pi_{s:T-1}^\alpha := (\varpi_{s:s}^\alpha,\cdots,\varpi_{s:T-1}^\alpha) \in \Pi_{s:T-1}.
    \end{equation}
    When $s = T$, we define $\pi_{T:T-1}^\alpha := \varnothing \in \Pi_{T:T-1}$.

    \textbf{Step 2:} We prove \eqref{eq-hatvtpilevt} by backward induction.

    For $t=T$, let $\pi_{T:T-1}\in\Pi_{T:T-1}$. By \eqref{eq-hatvTpiTT-1} and Lemma \ref{lemma:QuantileTransform}, we have
    \begin{equation}\label{eq-InductionhatvTT-1}
    \begin{aligned}
        \hat{v}_{T,\pi_{T:T-1}}(x,\alpha) = \VaR_\alpha^{\bar{\mbP}_x^{\pi_{T:T-1},0}}[g(X_0,U_0)] = g(x,\alpha) = v_T(x,\alpha),\quad \forall (x,\alpha) \in \hat{\bfX}.
    \end{aligned}
    \end{equation}
    
    Now suppose for every $(x,\alpha) \in \hat{\bfX}$, $t\in[T-1]$, we have
    \begin{equation}\label{eq-hatvt+1pilevt+1}
        \hat{v}_{t+1,\pi_{t+1:T-1}}(x,\alpha) \le v_{t+1}(x,\alpha),\quad \forall \pi_{t+1:T-1} \in \Pi_{t+1:T-1}.
    \end{equation}
    Fix an arbitrary policy $\pi_{t:T-1} \in \Pi_{t:T-1}$ and $(x,\alpha)\in\bfX\times[0,1)$. Conditioning on the initial action, and applying Lemma \ref{lemma:ConditionalVaRInequaltiy} gives
    \begin{equation}\label{eq-hatvtpitT-1}
    \begin{aligned}
        & \hat{v}_{t,\pi_{t:T-1}}(x,\alpha) \\
        &= \VaR_\alpha^{\bar{\mbP}_x^{\pi_{t:T-1},T-t}}\left[r_{t}(x,A_0) + \gamma \sum_{j=1}^{T-t-1}\gamma^{j} r_{j+t}(X_j,A_j) + \gamma^{T-t}g(X_{T-t},U_{T-t}) \right] \\
        &\le \sup_{a\in\bfA(x)}\left\{ r_t(x,a) + \gamma \VaR_\alpha^{\bar{\mbP}_x^{\pi_{t:T-1},T-t}}\left[\sum_{j=1}^{T-t-1}\gamma^{j-1} r_{j+t}(X_j,A_j) + \gamma^{T-t-1}g(X_{T-t},U_{T-t})  \mid A_0 = a\right]  \right\}.
    \end{aligned}
    \end{equation}
    For any fixed $a\in\bfA(x)$, let $\pi_{t+1:T-1}^{(x,a)}$ denote the continuation policy obtained by concatenating the fixed initial state-action pair $(x,a)$ with subsequent histories. Namely, for $\pi_{t:T-1} = (\varpi_{t:t},\varpi_{t:t+1},\cdots,\varpi_{t:T-1})$, we define $\pi_{t+1:T-1}^{(x,a)} := (\varpi_{t+1:t+1}^{(x,a)},\cdots,\varpi_{t+1:T-1}^{(x,a)})$, where for any continuation history 
    \[
    h_{t+1:k} := (x_{t+1},a_{t+1},\cdots,x_k) \in \bfH_{t+1:k},\quad t+1 \le k\le T-1,
    \]
    we have
    \begin{equation}\label{eq-varpit+1(x,a)=varpit(x,a,h)}
        \varpi_{t+1:k}^{(x,a)}(\cdot|h_{t+1:k}) := \varpi_{t:k}(\cdot|(x,a,h_{t+1:k})),\quad t+1 \le k \le T-1.
    \end{equation}
    Since for any $C \in \mcB(\bfA)$, $h_{t+1:k}\mapsto \varpi_{t:k}(C|(x,a,h_{t+1:k}))$ is measurable, we also have $h_{t+1:k}\mapsto \varpi_{t:k}^{(x,a)}(C|h_{t+1:k})$ being measurable.
    Hence $\pi_{t+1:T-1}^{(x,a)}$ is an admissible policy in $\Pi_{t+1:T-1}$.

    Therefore, we have
    \begin{align*}
        &\VaR_\alpha^{\bar{\mbP}_x^{\pi_{t:T-1},T-t}}\left[\sum_{j=1}^{T-t-1}\gamma^{j-1} r_{j+t}(X_j,A_j) + \gamma^{T-t-1} g(X_{T-t},U_{T-t}) \mid A_0 = a\right] \\
        =& \VaR_\alpha^{\bar{\mbP}_x^{\pi_{t:T-1},T-t}}\left[\sum_{j=1}^{T-t-1}\gamma^{j-1} r_{j+t}(X_j,A_j) + \gamma^{T-t-1} g(X_{T-t},U_{T-t}) \mid X_0 = x, A_0 = a\right]\\
        =& \sup_{\zeta\in\mcZ_\alpha(Q(\cdot|x,a))} \inf_{x'\in\bfX} \VaR_{\zeta(x')}^{\bar{\mbP}_x^{\pi_{t:T-1},T-t}}\Bigg[\sum_{j=1}^{T-t-1}\gamma^{j-1} r_{j+t}(X_j,A_j) \\
        &\qquad\qquad\qquad\qquad + \gamma^{T-t-1} g(X_{T-t},U_{T-t}) \mid X_0 = x, A_0 = a, X_1 = x'\Bigg]\\
        =& \sup_{\zeta\in\mcZ_\alpha(Q(\cdot|x,a))} \inf_{x'\in\bfX} \VaR_{\zeta(x')}^{\bar{\mbP}_{x'}^{\pi_{t+1:T-1}^{(x,a)},T-t-1}}\left[\sum_{k=0}^{T-t-2}\gamma^{k} r_{k+t+1}(X_k,A_k) + \gamma^{T-t-1} g(X_{T-t-1},U_{T-t-1}) \right]\\
        =& \sup_{\zeta\in\mcZ_\alpha(Q(\cdot|x,a))} \inf_{x'\in\bfX}\hat{v}_{t+1,\pi_{t+1:T-1}^{(x,a)}}(x',\zeta(x')),
    \end{align*}
    where the second equality follows from Theorem \ref{thm:VaRDecomposition}, and the third equality follows from \eqref{eq-IonescuTulceaTheorem}. Taking back into \eqref{eq-hatvtpitT-1}, and applying the induction hypothesis, we obtain
    \begin{align*}
        \hat{v}_{t,\pi_{t:T-1}}(x,\alpha) &\le \sup_{a\in\bfA(x)}\left\{ r_t(x,a) + \gamma \sup_{\zeta\in\mcZ_\alpha(Q(\cdot|x,a))} \inf_{x'\in\bfX} \hat{v}_{t+1,\pi_{t+1:T-1}^{(x,a)}}(x',\zeta(x'))\right\}\\
        &\le \sup_{a\in\bfA(x)}\left\{ r_t(x,a) + \gamma \sup_{\zeta\in\mcZ_\alpha(Q(\cdot|x,a))} \inf_{x'\in\bfX} v_{t+1}(x',\zeta(x'))\right\}\\
        &= (\msL_t v_{t+1})(x,\alpha) = v_t(x,\alpha).
    \end{align*}
    The case when $\alpha = 1$ trivially holds by definition. Hence, we conclude that $\hat{v}_{t,\pi_{t:T-1}}(x,\alpha) \le v_t(x,\alpha)$ for all $(x,\alpha) \in \hat{\bfX}$ and $\pi_{t:T-1}\in\Pi_{t:T-1}$. This implies  \eqref{eq-hatvtpilevt} holds for all $t\in[T]$.

    \textbf{Step 3:} We prove \eqref{eq-hatvtpialphagevt} by backward induction.

    For $t\in[T]$, consider the policy $\pi_{t:T-1}^\alpha \in \Pi_{t:T-1}$ defined in \eqref{eq-pisT-1alpha}. When $t=T$, similar to \eqref{eq-InductionhatvTT-1}, we have
    \[
    \hat{v}_{T,\pi_{T:T-1}^\alpha}(x,\alpha) = \VaR_\alpha^{\bar{\mbP}_x^{\pi_{T:T-1}^\alpha,0}}[g(X_0,U_0)] = g(x,\alpha) = v_{T}(x,\alpha),\quad \forall(x,\alpha)\in \hat{\bfX}.
    \]
    For $t=T-1$, fix $(x,\alpha) \in \bfX\times[0,1)$, and let $a := \hat{f}_{T-1}(x,\alpha)$. Following Theorem \ref{thm:VaRDecomposition}, Lemma \ref{lemma:msLtmsBtProperties} and \eqref{eq-vt=rt+gammasupinfvt+1}, we have
    \begin{align*}
        \hat{v}_{T-1,\pi_{T-1:T-1}^\alpha}(x,\alpha) &= r_{T-1}(x,a) + \gamma \VaR_\alpha^{\bar{\mbP}_x^{\pi_{T-1:T-1}^\alpha},1}[g(X',U)] \\
        &= r_{T-1}(x,a) + \gamma \sup_{\zeta\in\mcZ_{\alpha}(Q(\cdot|x,a))}\inf_{x'\in\bfX} g(x',\zeta(x')) = v_{T-1}(x,\alpha).
    \end{align*}
    Moreover $\hat{v}_{T-1,\pi_{T-1:T-1}^\alpha}(x,1) = v_{T-1}(x,1)$ for all $x\in\bfX$ by definition.
    Now suppose for every $(x,\alpha)\in\hat{\bfX}$, $T \ge 2$, $t\in[T-2]$, we have
    \begin{equation}\label{eq-hatvt+1pit+1alphagevt}
        \hat{v}_{t+1,\pi_{t+1:T-1}^\alpha}(x,\alpha) \ge v_{t+1}(x,\alpha).
    \end{equation}

    Fix $(x,\alpha) \in \bfX\times[0,1)$, $t\in [T-2]$. Let $a := \hat{f}_t(x,\alpha)$ and define $\hat{\zeta}(x') := \mfa_{t+1}(x,\alpha,x')$ for $x'\in\bfX$. By \eqref{eq-mfat+1inmcZalpha}, $\hat{\zeta} \in \mcZ_\alpha(Q(\cdot|x,a))$. In the following, we show that, after the history $h_{t:t+1} = (x,a,x')$, the continuation of $\pi_{t:T-1}^\alpha$ coincides with the policy $\pi_{t+1:T-1}^{\hat{\zeta}(x')}$.

    Indeed, by \eqref{eq-rhostalpha} and \eqref{eq-varpistalpha}, we have
    \[
    \rho_{t:t+1}^\alpha(h_{t:t+1}) = \mfa_{t+1}(x,\alpha,x') = \hat{\zeta}(x') = \rho_{t+1:t+1}^{\hat{\zeta}(x')}(h_{t+1:t+1}).
    \]
    Now suppose for some fixed $k \in \{t+1,\cdots,T-2\}$, $x'\in\bfX$, and $h_{t+1:k} = (x',a_{t+1},\cdots,x_k)$ being any continuation history, we have
    \begin{equation}\label{eq-rhotk+1alpha=rhot+1k+1hatzetax'}
        \rho_{t:k}^\alpha((x,a,h_{t+1:k})) = \rho_{t+1:k}^{\hat{\zeta}(x')}(h_{t+1:k}).
    \end{equation}
    Then 
    \begin{align*}
        \rho_{t:k+1}^\alpha((x,a,h_{t+1:k+1})) &= \mfa_{k+1}\left(x_k,\rho_{t:k}^\alpha((x,a,h_{t+1:k})),x_{k+1}\right) \\
        &= \mfa_{k+1}\left(x_k, \rho_{t+1:k}^{\hat{\zeta}(x')}(h_{t+1:k}),x_{k+1}\right) = \rho_{t+1:k+1}^{\hat{\zeta}(x')}(h_{t+1:k+1}).
    \end{align*}
    Therefore, we conclude that \eqref{eq-rhotk+1alpha=rhot+1k+1hatzetax'} holds for any $k \in \{t+1,\cdots,T-1\}$. Therefore, by \eqref{eq-varpistalpha}, we have
    \[
    \varpi_{t:k}^\alpha(\cdot|(x,a,h_{t+1:k})) = \delta_{\hat{f}_k(x_k,\rho_{t:k}^\alpha((x,a,h_{t+1:k})))}(\cdot) = \delta_{\hat{f}_k(x_k,\rho_{t+1:k}^{\hat{\zeta}(x')}(h_{t+1:k}))}(\cdot) = \varpi_{t+1:k}^{\hat{\zeta}(x')}(\cdot|h_{t+1:k}).
    \]
    This implies that $\pi_{t+1:T-1}^{\hat{\zeta}(x')}$ is indeed the continuation policy for $\pi_{t:T-1}^\alpha$ after $h_{t:t+1}$.
    
    We have
    \begin{equation}\label{eq-hatvpitT-1alpha}
        \hat{v}_{t,\pi_{t:T-1}^\alpha}(x,\alpha) 
        = r_t(x,a) + \gamma \VaR_\alpha^{\bar{\mbP}_x^{\pi_{t:T-1}^\alpha},T-t}\left[ \sum_{j=1}^{T-t-1} \gamma^{j-1}r_{j+t}(X_j,A_j) + \gamma^{T-t-1} g(X_{T-t},U_{T-t}) \right].
    \end{equation}
    Moreover, by Theorem \ref{thm:VaRDecomposition} and the Markov property, we have
    \begin{align*}
        &\VaR_\alpha^{\bar{\mbP}_x^{\pi_{t:T-1}^\alpha},T-t}\left[ \sum_{j=1}^{T-t-1} \gamma^{j-1}r_{j+t}(X_j,A_j) + \gamma^{T-t-1} g(X_{T-t},U_{T-t}) \right]\\
        =& \sup_{\zeta\in\mcZ_\alpha(Q(\cdot|x,a))}\inf_{x'\in\bfX} \VaR_{\zeta(x')}^{\bar{\mbP}_x^{\pi_{t:T-1}^\alpha},T-t}\Bigg[ \sum_{j=1}^{T-t-1} \gamma^{j-1} r_{j+t}(X_j,A_j) + \gamma^{T-t-1} g(X_{T-t},U_{T-t}) \mid X_1 = x' \Bigg] \\
        =& \sup_{\zeta\in\mcZ_\alpha(Q(\cdot|x,a))}\inf_{x'\in\bfX} \VaR_{\zeta(x')}^{\bar{\mbP}_{x'}^{\pi_{t+1:T-1}^{\hat{\zeta}(x')}},T-t-1}\left[ \sum_{k=0}^{T-t-2} \gamma^k r_{k+t+1}(X_k,A_k) + \gamma^{T-t-1} g(X_{T-t-1},U_{T-t-1}) \right]\\
        =& \sup_{\zeta\in\mcZ_\alpha(Q(\cdot|x,a))}\inf_{x'\in\bfX} \hat{v}_{t+1,\pi_{t+1:T-1}^{\hat{\zeta}(x')}}(x',\zeta(x')).
    \end{align*}
    Combining with \eqref{eq-hatvpitT-1alpha} and the induction hypothesis yields
    \begin{align*}
         \hat{v}_{t,\pi_{t:T-1}^\alpha}(x,\alpha) & = r_t(x,a) + \gamma \sup_{\zeta\in\mcZ_\alpha(Q(\cdot|x,a))}\inf_{x'\in\bfX} \hat{v}_{t+1,\pi_{t+1:T-1}^{\hat{\zeta}(x')}}(x',\zeta(x'))\\
         & \ge r_t(x,a) + \gamma \inf_{x'\in\bfX} \hat{v}_{t+1,\pi_{t+1:T-1}^{\hat{\zeta}(x')}}(x',\hat{\zeta}(x'))\\
         & \ge r_t(x,a) + \gamma \inf_{x'\in\bfX} v_{t+1}(x',\hat{\zeta}(x')).
    \end{align*}
    By the definition of $\hat{\zeta}(x') = \mfa_{t+1}(x,\alpha,x')$, we also have $\gamma v_{t+1}(x',\hat{\zeta}(x')) \ge v_t(x,\alpha) - r_t(x,a)$. This gives
    \[
    \hat{v}_{t,\pi_{t:T-1}^\alpha}(x,\alpha) \ge v_t(x,\alpha).
    \]
    The case for $\alpha = 1$ trivially holds by definition. Hence, we conclude that $\hat{v}_{t,\pi_{t:T-1}^\alpha}(x,\alpha) \ge v_t(x,\alpha)$ for all $t\in [T-1]$. Therefore, \eqref{eq-hatvtpialphagevt} holds. 
    This completes the proof.
\end{proof}

Theorem \ref{thm:VaRDPTheorem} applies to standard Borel state and action spaces, including continuous spaces. Any real-valued u.s.c. terminal reward function $\tilde{g}\in\mcL(\bfX)$ can be embedded in $\mcU_u(\hat{\bfX})$ by setting $g(x,\alpha):=\tilde{g}(x)$ for $\alpha<1$ and $g(x,1):=+\infty$. Neither the one-stage rewards nor the terminal reward need be bounded, allowing the framework to handle unbounded states in a wide range of real-life applications, such as the revenues arising from the price process in the optimal execution problem considered in Section 6.

By \eqref{eq-hatvtpilevt} and \eqref{eq-hatvtpialphagevt} in the proof of Theorem \ref{thm:VaRDPTheorem}, it suffices to consider the policy defined in \eqref{eq-pisT-1alpha} to characterize an optimal policy. In particular, \eqref{eq-mfat+1inmcZalpha} shows that, under the optimal policy, a sequence of state-action pairs uniquely determines the associated trajectory of ``optimal" risk levels as in \eqref{eq-rhostalpha}. Conversely, this risk-level trajectory uniquely determines the optimal policy on the history space via \eqref{eq-varpistalpha}. We formalize it as the following corollary.

\begin{cor}\label{cor:MarkovHistoryPolicy}
    Suppose Assumption \ref{assump:CompactContinuity} holds. Let $\hat{f}_t \in \hat{\mbF}$ be a selector satisfying
    \begin{equation}\label{eq-MarkovHistoryPolicy-1}
        \hat{f}_t(x,\alpha) \in \argmax_{a\in\bfA(x)} (L_t v_{t+1})(x,\alpha,a),\quad \forall (x,\alpha) \in \hat{\bfX}.
    \end{equation}
    For a fixed initial risk level $\alpha \in [0,1)$, set $\rho_0^\alpha(h_0) := \alpha$ for any $h_0 = (x_0) \in \bfH_0$. Given any history $h_{t+1} = (x_0,a_0,\cdots,x_t,a_t,x_{t+1})\in\bfH_{t+1}$, recursively define
    \begin{equation}\label{eq-MarkovHistoryPolicy-2}
        \rho_{t+1}^\alpha(h_{t+1}) := \min\Bigg\{ \bar{\alpha} \in [0,1]:  \gamma v_{t+1}(x_{t+1},\bar{\alpha}) \ge  v_t(x_t,\rho_t^\alpha(h_t)) - r_t(x_t,\hat{f}_t(x_t,\rho_t^\alpha(h_t)) \Bigg\}, 
    \end{equation}
    for $t = 0,\cdots,T-2$. Let $\pi_T^\alpha := (\varpi_0^\alpha,\cdots,\varpi_{T-1}^\alpha)$ induced by
    \begin{equation}\label{eq-varpitalphaht}
        \varpi_t^\alpha(\cdot|h_t) := \delta_{\hat{f}_{t}(x_t,\rho_t^\alpha(h_t))}(\cdot).
    \end{equation}
    Then $\pi_T^\alpha \in \Pi_T$, and $\hat{v}_{0,\pi_T^\alpha}(x,\alpha) = v_0(x,\alpha) = J_{\alpha,T}^*(x)$. Thus, $\pi_T^\alpha$ is optimal for the finite horizon problem \eqref{eq-objectivefunction-FH}. The same conclusion holds when $v_t$ and $L_t$ are replaced by $v_t^u$ and $B_t$, respectively.
\end{cor}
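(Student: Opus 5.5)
The plan is to show that the policy $\pi_T^\alpha$ in the statement coincides with the policy $\pi_{0:T-1}^\alpha$ built in Step~1 of the proof of Theorem~\ref{thm:VaRDPTheorem} (with $s=0$). Once they agree, \eqref{eq-hatvtpialphagevt} for $t=0$ gives $\hat{v}_{0,\pi_T^\alpha}(x,\alpha)\ge v_0(x,\alpha)$. Inequality \eqref{eq-hatvtpilevt} gives the reverse inequality. Theorem~\ref{thm:VaRDPTheorem} then identifies $v_0(x,\alpha)$ with $J^*_{\alpha,T}(x)$.

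\textbf{Step 1: existence of the selector.} Lemma~\ref{lemma:msLtmsBtProperties}(iv), applied with $v=v_{t+1}\in\mcU_u(\hat{\bfX})$ (Lemma~\ref{lemma:vtvtuProperties}(i)), produces a measurable selector attaining the argmax in \eqref{eq-MarkovHistoryPolicy-1}. Any such selector satisfies \eqref{eq-vt=rt+gammasupinfvt+1}, because $(\msL_t v_{t+1})(x,\alpha)=v_t(x,\alpha)$. Hence Step~1 of that proof runs verbatim with this $\hat f_t$. In particular, Theorem~\ref{thm:VaRDecomposition} supplies the maximizer $\bar\zeta_{t,x,\alpha}$, so the set $D_{t+1}(x_t,\rho_t,x_{t+1})$ is nonempty.

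\textbf{Step 2: the minimum in \eqref{eq-MarkovHistoryPolicy-2} is attained.} The set $D_{t+1}$ is an up-set in $\bar\alpha$, since $v_{t+1}(x',\cdot)$ is non-decreasing. It is closed from the right-limit side, since $v_{t+1}(x',\cdot)$ is right-continuous by (A2). Therefore the minimum exists. With this, $\rho_{t+1}^\alpha(h_{t+1})=\mfa_{t+1}(x_t,\rho_t^\alpha(h_t),x_{t+1})$, which is exactly recursion \eqref{eq-rhostalpha} with $s=0$. Consequently $\varpi_t^\alpha=\varpi_{0:t}^\alpha$, and $\pi_T^\alpha=\pi_{0:T-1}^\alpha$.

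\textbf{Step 3: admissibility.} We need $\pi_T^\alpha\in\Pi_T$, which reduces to measurability of $h_t\mapsto\rho_t^\alpha(h_t)$; this is the main obstacle. The paper's Lemma~\ref{lemma:NewPolicyMeasurable} is cited for exactly this in the theorem's proof, and I would invoke it directly. If I had to argue it myself, I would proceed by induction. First, $(x,\alpha,x')\mapsto\mfa_{t+1}(x,\alpha,x')$ is Borel. Its sublevel sets are
\[
\{\mfa_{t+1}\le c\}=\{\gamma v_{t+1}(x',c)\ge v_t(x,\alpha)-r_t(x,\hat f_t(x,\alpha))\},
\]
using monotonicity and right-continuity in $\bar\alpha$. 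This set is measurable because $v_{t+1}$, $v_t$ and $r_t$ are Borel (they are u.s.c., with measurability from Lemma~\ref{lemma:QuantileTransform}(i)) and $\hat f_t$ is measurable. Composing with the measurable $\rho_t^\alpha$ then gives measurability of $\rho_{t+1}^\alpha$. Finally, $h_t\mapsto\hat f_t(x_t,\rho_t^\alpha(h_t))$ is measurable and lies in $\bfA(x_t)$, so the Dirac kernels in \eqref{eq-varpitalphaht} are admissible.

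\textbf{Nested-VaR version.} Replacing $v_t,L_t$ by $v_t^u,B_t$ changes nothing, since $v_t=v_t^u$ by Lemma~\ref{lemma:vtvtuProperties}(ii) and $L_tv=B_tv$ by Lemma~\ref{lemma:msLtmsBtProperties}(i). Thus the argmax sets, and with them the recursion \eqref{eq-MarkovHistoryPolicy-2}, are identical.
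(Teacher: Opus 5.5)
Your proposal is correct and follows essentially the same route as the paper. Admissibility comes from Lemma~\ref{lemma:NewPolicyMeasurable}, the two inequalities from \eqref{eq-hatvtpilevt} and \eqref{eq-hatvtpialphagevt} in Steps~2--3 of the proof of Theorem~\ref{thm:VaRDPTheorem}, and the nested-VaR version from Lemmas~\ref{lemma:msLtmsBtProperties} and \ref{lemma:vtvtuProperties}; your explicit identification of $\pi_T^\alpha$ with $\pi_{0:T-1}^\alpha$, using that any argmax selector satisfies \eqref{eq-vt=rt+gammasupinfvt+1}, spells out a step the paper leaves implicit.
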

\begin{proof}
    From Lemma \ref{lemma:NewPolicyMeasurable}, the mappings $\rho_t^\alpha$ are measurable, and the kernels in \eqref{eq-varpitalphaht} define an admissible policy $\pi_T^\alpha \in \Pi_T$. Step 2 and 3 in the proof of Theorem \ref{thm:VaRDPTheorem} show that $\hat{v}_{0,\pi_T^\alpha}(x,\alpha) \le v_0(x,\alpha)$ and $\hat{v}_{0,\pi_T^\alpha}(x,\alpha) \ge v_0(x,\alpha)$ for all $(x,\alpha)\in\hat{\bfX}$. Therefore, by Theorem \ref{thm:VaRDPTheorem}, we have $\hat{v}_{0,\pi_T^\alpha}(x,\alpha) = v_0(x,\alpha) = J_{\alpha,T}^*(x)$, i.e., the induced policy $\pi_T^\alpha$ is optimal. The result then follows from Lemmas \ref{lemma:msLtmsBtProperties} and \ref{lemma:vtvtuProperties}.
\end{proof}

Notice that, by construction, the selector $\hat f_t$ is Markovian with respect to the augmented state space $\hat{\bfX}$. By \eqref{eq-MarkovHistoryPolicy-2}, the optimal risk level $\rho_{t+1}^\alpha(h_{t+1})$ depends on the history $h_{t+1}$ only through $\rho_t^\alpha(h_t)$ and the state transition $(x_t,x_{t+1})$. Hence, when expressed on the original state space $\bfX$, the optimal policy $\pi_T^\alpha$ is generally history-dependent. While for the CVaR discounted MDP in \cite{Bauerle2011mmor}, the augmented state represents the accumulated discounted reward, in the VaR formulation, the optimal risk level in \eqref{eq-MarkovHistoryPolicy-2} generally does not admit a closed-form representation, highlighting an additional complexity relative to the CVaR case.

\begin{remark}\label{remark:DegenerateCaseofalpha=0}
    When $\alpha = 0$, VaR reduces to the essential infimum. Hence, by Lemma \ref{lemma:QuantileTransform}, \eqref{eq-Bellman-FH} and \eqref{eq-BellmanNested-FH}
     reduce to
    \[
        v_t(x,0) = \max_{a\in\bfA(x)}\left\{ r_t(x,a) + \gamma \essinf_{X'\sim Q(\cdot|x,a)} v_{t+1}(X',0) \right\},\quad \forall x\in\bfX, t\in [T-1],
    \]
    and \eqref{eq-MarkovHistoryPolicy-2} is automatically satisfied by $\bar{\alpha} = 0$ since by the upper semicontinuity of $v_{t+1}(\cdot,0)$, we have
    \[
    \gamma v_{t+1}(x',0) \ge v_t(x_t,0) - r_t(x_t,\hat{f}_t(x_t,0)),
    \]
    for any $x' \in \supp(Q(\cdot|x_t,\hat{f}_{t}(x_t,0)))$. 
    Thus, defining $\tilde{v}_t(x) := v_t(x,0)$ yields a recursion independent of the risk level, with $\tilde{v}_0$ solving the optimal essential-infimum problem.     
    Consequently, the optimal policy defined by selectors $\tilde{f}_t(x) := \hat{f}_t(x,0)$ is optimal and deterministic Markovian on the original state space $\bfX$.
\end{remark}

\section{VaR Dynamic Programming in Infinite-horizon}\label{sec:VaRDPInfinite}

In this section, we investigate the infinite-horizon discounted formulation \eqref{eq-ObjectiveFunction-IH} of the VaR MDP. Specifically, we consider a discount factor $\gamma \in (0,1)$ and assume that the reward function $r$ is time-invariant and finite. 

\begin{assump}\label{assump:StationaryBoundedReward}
    There exists a bounded u.s.c. function $r: \bfK \to \mbR$ such that $r_t \equiv r$ for all $t\ge 0$. In particular, there exists $\bar{r} < \infty$ such that $|r(x,a)| \le \bar{r}$, for all $(x,a) \in \bfK$.
\end{assump}
Under Assumption \ref{assump:StationaryBoundedReward}, we have $J_{\alpha,\infty}^*(x) \le \frac{\bar{r}}{1-\gamma} < \infty$ for $\alpha \in [0,1)$ and hence the infinite-horizon discounted objective \eqref{eq-ObjectiveFunction-IH} is well-defined.

We now introduce a dynamic programming recursion analogous to \eqref{eq-Bellman-FH} initialized from time $0$ onward. When the reward function is time-invariant, we write $L, \msL, B, \msB$ for the operators $L_t, \msL_t, B_t, \msB_t$ defined in \eqref{eq-OperatorLt}, \eqref{eq-OperatormsLt}, \eqref{eq-OperatorBt} and \eqref{eq-OperatormsBt}, respectively. The recursion is then defined as follows: for all $(x,\alpha)\in\bfX\times[0,1)$,
\begin{equation}\label{eq-wt}
\begin{aligned}
    w_{0}(x,\alpha) &:= 0,\\
    w_{t}(x,\alpha) &:= (\msL w_{t-1})(x,\alpha), \quad \forall t \in \mbN_+, 
\end{aligned}
\end{equation}
with $w_t(x,1) := \infty$ for all $x\in\bfX$ and $t\in\mbN$.
Similarly, the nested VaR dynamic programming recursion initialized from time $0$ onward can be defined as: for all $(x,\alpha) \in \bfX\times[0,1)$,
\begin{equation}\label{eq-wut}
\begin{aligned}
    w_{0}^u(x,\alpha) &:= 0,\\
    w_{t}^u(x,\alpha) &:= (\msB w_{t-1}^u)(x,\alpha),\quad \forall t\in \mbN_+, 
\end{aligned}
\end{equation}
with $w_t^u(x,1) := \infty$, for all $x\in \bfX$ and $t\in\mbN$.

To analyze the properties of \eqref{eq-wt} and \eqref{eq-wut}, we consider the following function set. Let $\mcU_b(\hat{\bfX})$ denote the collection of all functions $g:\hat{\bfX}\to\bar{\mbR}$ such that
\begin{enumerate}[label = (B\arabic*)]
    \item $g$ is u.s.c. on $\hat{\bfX}$;
    \item $\alpha \mapsto g(x,\alpha)$ is non-decreasing and right-continuous for each $x\in \bfX$;
    \item There exists a constant $M >0$ such that $|g(x,\alpha)| \le M$ for all $(x,\alpha) \in \bfX\times[0,1)$;
    \item $g(x,1) = +\infty$, for all $x\in\bfX$.
\end{enumerate}
We define the following metric for functions in $\mcU_b(\hat{\bfX})$:
\[
    d(u,v) := \sup_{(x,\alpha) \in \bfX\times[0,1)} | u(x,\alpha) - v(x,\alpha)|,\quad \forall u,v \in \mcU_b(\hat{\bfX}).
\]
It is straightforward to verify the following properties of the operator $\msB$ over the metric space $(\mcU_b(\hat{\bfX}), d)$.

\begin{lemma}\label{lemma:msBProperties}
    Under Assumptions \ref{assump:CompactContinuity}(i)(iii) and \ref{assump:StationaryBoundedReward}, the following assertions hold:
    \begin{enumerate}[label = (\roman*)]
        \item The metric space $(\mcU_b(\hat{\bfX}), d)$ is complete;
        \item $\msB: \mcU_b(\hat{\bfX}) \to \mcU_b(\hat{\bfX})$;
        \item $d(\msB u, \msB v) \le \gamma d(u, v)$, for $u, v \in \mcU_b(\hat{\bfX})$.
    \end{enumerate}
\end{lemma}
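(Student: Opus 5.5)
We treat the three assertions separately. Throughout we use the representation $(\msB v)(x,\alpha)=\sup_{a\in\bfA(x)}\{r(x,a)+\gamma F_v(x,\alpha,a)\}$, with $F_v$ as in \eqref{eq-Fg}. We also use two elementary facts about VaR: it is monotone, $\VaR_\alpha[Z_1]\le\VaR_\alpha[Z_2]$ whenever $Z_1\le Z_2$ a.s., and it is translation invariant, $\VaR_\alpha[Z+c]=\VaR_\alpha[Z]+c$.

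For (i), we take a Cauchy sequence $\{v_n\}\subset\mcU_b(\hat{\bfX})$. On $\bfX\times[0,1)$ the functions are uniformly bounded, so $v_n$ converges uniformly there to some function $v$; we then set $v(x,1):=+\infty$. Properties (B2)–(B4) pass to the limit directly. Monotonicity is preserved pointwise, and the bound $M$ is obtained from the Cauchy property, e.g. $M_N+1$ where $M_N$ bounds $v_N$. Right-continuity in $\alpha$ holds because a uniform limit of right-continuous functions is right-continuous. Upper semicontinuity (B1) also survives uniform convergence. For a sequence $(x_k,\alpha_k)\to(x,\alpha)$ with $\alpha<1$, we write $\limsup_k v(x_k,\alpha_k)\le \limsup_k v_n(x_k,\alpha_k)+\epsilon\le v_n(x,\alpha)+\epsilon\le v(x,\alpha)+2\epsilon$. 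The case $\alpha=1$ is trivial, since then $v=+\infty$ at the limit point. Points with $\alpha_k=1$ are harmless: they can only occur in a sequence converging to a point with $\alpha=1$, or else finitely often.

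For (ii), we first note that $\mcU_b(\hat{\bfX})\subset\mcU_u(\hat{\bfX})$. Assumption \ref{assump:StationaryBoundedReward} supplies the u.s.c. reward needed for Assumption \ref{assump:CompactContinuity}(ii), so Lemma \ref{lemma:msLtmsBtProperties}(iii) gives $\msB v\in\mcU_u(\hat{\bfX})$. This already yields (B1), (B2) and (B4). It remains to check (B3). For $\alpha<1$ we have $-M\le v(X',U)$ almost surely, and also $v(X',U)\le M$ on $\{U<1\}$, which has full measure. Monotonicity of VaR then gives $|F_v(x,\alpha,a)|\le M$, and hence $|\msB v|\le \bar r+\gamma M$ on $\bfX\times[0,1)$.

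For (iii), we fix $u,v$ and let $\delta:=d(u,v)$. Almost surely $v(X',U)-\delta\le u(X',U)\le v(X',U)+\delta$, because $U<1$ almost surely. Monotonicity and translation invariance give $|F_u(x,\alpha,a)-F_v(x,\alpha,a)|\le\delta$. We then use $|\sup_a f(a)-\sup_a g(a)|\le\sup_a|f(a)-g(a)|$, with both suprema finite, to obtain $|(\msB u)(x,\alpha)-(\msB v)(x,\alpha)|\le\gamma\delta$ for all $\alpha<1$.

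The only delicate point is the value $+\infty$ at $\alpha=1$. It appears both in the completeness limit and in the event $\{U=1\}$, and each time it is handled by excluding it explicitly, using the restriction of $d$ to $\alpha<1$ and the fact that $U<1$ almost surely.
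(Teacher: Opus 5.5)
Your proposal is correct and follows essentially the same route as the paper: completeness via the uniform limit with the value $+\infty$ reattached at $\alpha=1$, then Lemma~\ref{lemma:msLtmsBtProperties}(iii) plus a separate boundedness check for (ii), and monotonicity and translation invariance of VaR together with $U<1$ almost surely for (iii). The only cosmetic difference is in (i): you get right-continuity of the limit directly from uniform convergence, whereas the paper deduces it from upper semicontinuity and monotonicity via Lemma~\ref{lemma:UscNondecreasingisRightcontinuous}.
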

\begin{proof}
    (i) First note that $d$ is indeed a metric on $\mcU_b(\hat{\bfX})$. By (B3), for any $u,v \in \mcU_b(\hat{\bfX})$, $u,v$ are finite and bounded and hence $d(u,v) < \infty$. The symmetry and triangular inequality follows immediately. Moreover, if $d(u,v) = 0$, then $u(x,\alpha) = v(x,\alpha)$ for $(x,\alpha) \in \bfX\times[0,1)$ and by definition $u(x,1) = v(x,1) = +\infty$ for all $x\in \bfX$. Thus $u=v$.
    
    Let $\{v_n\}_{n\in\mbN} \subset \mcU_b(\hat{\bfX})$ be a Cauchy sequence under metric $d$. For any $(x,\alpha) \in \bfX\times[0,1)$ and $\epsilon > 0$, there exists some $N > 0$ such that $|v_n(x,\alpha) - v_m(x,\alpha)|\le d(v_n,v_m) \le \epsilon$ for all $n,m > N$ and hence $\{v_n(x,\alpha)\}_{n\in\mbN}$ is Cauchy in $\mbR$. Define $v(x,\alpha) := \lim_{n\to\infty} v_n(x,\alpha)$ for $(x,\alpha)\in\bfX\times[0,1)$ and set $v(x,1) := +\infty$ for all $x\in\bfX$. Then for any $\epsilon > 0$, there exists some $N > 0$ such that $d(v_n,v_m) \le \epsilon$. Let $m\to\infty$, we obtain $|v_n(x,\alpha) - v(x,\alpha)| \le \epsilon$ for all $(x,\alpha)\in\bfX\times[0,1)$ and hence $d(v_n,v) \le \epsilon$ for all $n > N$. This implies $v_n \to v$ uniformly on $\bfX\times[0,1)$. The case for $\alpha = 1$ trivially holds. Hence it remains to check that $v\in \mcU_b(\hat{\bfX})$.

    Since $v_n \to v$ on $\bfX\times[0,1)$ and $\{v_n\}_{n\in\mbN}$ are u.s.c. It follows that $v$ is also u.s.c. on $\hat{\bfX}$. Indeed, the result trivially holds for $\alpha = 1$. Fix an arbitrary $(x_0,\alpha_0) \in \bfX\times[0,1)$. For any $\epsilon > 0$, choose $n$ so large such that $d(v_n,v) < \frac{\epsilon}{3}$. Since $v_n$ is u.s.c. at $(x_0,\alpha_0)$, we have $\limsup_{(x,\alpha)\to(x_0,\alpha_0)}v_n(x,\alpha) < v_n(x_0,\alpha_0) + \frac{\epsilon}{3}$. Consequently,
    \[
    \limsup_{(x,\alpha)\to(x_0,\alpha_0)} v(x,\alpha) < \limsup_{(x,\alpha)\to(x_0,\alpha_0)} v_n(x,\alpha) + \frac{\epsilon}{3} < v_n(x_0,\alpha_0) + \frac{2\epsilon}{3} < v(x_0,\alpha_0) + \epsilon.
    \]
    Since $\epsilon > 0$ is arbitrary, we conclude that $v$ is u.s.c. on $\bfX\times[0,1)$.

    For any $x\in\bfX$, since $\alpha\mapsto v_n(x,\alpha)$ is non-decreasing, we have $v_n(x,\alpha_1) \le v_n(x,\alpha_2)$ for any $0\le \alpha_1\le \alpha_2 \le 1$. Taking $n\to\infty$, we obtain that $v(x,\alpha_1) \le v(x,\alpha_2)$. Hence $\alpha\mapsto v(x,\alpha)$ is also non-decreasing. The right-continuity then follows from Lemma~\ref{lemma:UscNondecreasingisRightcontinuous}.

    Finally, to show $v$ is also uniformly bounded on $\bfX\times[0,1)$, choose $N$ sufficiently large such that $d(v_n,v_N) < 1$ and by (B3), there exists some constant $M_N > 0$ such that $|v_N(x,\alpha)|\le M_N$ for all $(x,\alpha)\in\bfX\times[0,1)$. We have 
    \[
    |v_n(x,\alpha)| \le |v_n(x,\alpha) - v_N(x,\alpha)| + |v_N(x,\alpha)| \le 1 + M_N < \infty,\quad \forall (x,\alpha)\in\bfX\times[0,1).
    \]
    Let $n\to\infty$, we obtain that $|v(x,\alpha)| \le 1+ M_N$ for all $(x,\alpha)\in\bfX\times[0,1)$.
    
    Therefore we conclude that $(\mcU_b(\hat{\bfX}),d)$ is complete.

    (ii) Take any $v \in \mcU_b(\hat{\bfX})$ and $M \ge \max\{ M_v, \frac{\bar{r}}{1-\gamma}\}$, where $M_v := \sup_{(x,\alpha)\in\bfX\times[0,1)}|v(x,\alpha)|$. Clearly, we have $|(\msB v)(x,\alpha)| \le \bar{r} + \gamma M_v \le M$. Hence (B3) holds for $\msB v$.
    Since $\mcU_b(\hat{\bfX})\subseteq\mcU_u(\hat{\bfX})$, by Lemma \ref{lemma:msLtmsBtProperties}, we have $\msB v \in \mcU_u(\hat{\bfX})$, hence (B1)(B2)(B4) hold for $\msB v$.
    Therefore $\msB v \in \mcU_b(\hat{\bfX})$, and the statement is shown.

    (iii) For any $v_1, v_2 \in \mcU_b(\hat{\bfX})$ and $(x,\alpha) \in \bfX\times[0,1)$, let $\eta := d(v_1,v_2)$, which is finite.
    For any $x\in \bfX$, $a\in \bfA(x)$ and $\alpha \in [0,1)$, let $X'\sim Q(\cdot|x,a)$, $U\sim \mathrm{Unif}[0,1]$, independent of $X'$. Since $U < 1$ almost surely, we have $|v_1(X',U) - v_2(X',U)| \le \eta$ almost surely. By the monotonicity and translation invariance of VaR, we obtain that
    \[
        \left|\VaR_\alpha[v_1(X',U)] - \VaR_\alpha[v_2(X',U)]\right| \le \eta.
    \]
    Following the supremum inequality, for any $(x,\alpha) \in \bfX\times[0,1)$, we have
    \begin{align*}
        |(\msB v_1)(x,\alpha) - (\msB v_2)(x,\alpha)| \le \gamma \sup_{a\in\bfA(x)} |\VaR_\alpha[v_1(X',U)] - \VaR_\alpha[v_2(X',U)]| \le \gamma \eta.
    \end{align*}
    Taking the supremum over $\bfX\times[0,1)$, it follows that $d(\msB v_1, \msB v_2) \le \gamma d(v_1,v_2)$.
\end{proof}

We summarize below the stationary-reward dynamic programming result. This shows that one can iterate the dynamic programming equations \eqref{eq-wt} or \eqref{eq-wut}, and as the iterates goes to infinity, we can recover the infinite horizon objective \eqref{eq-ObjectiveFunction-IH}.

\begin{thm}\label{thm:VaRDPInfiniteHorizon}
    Suppose Assumptions \ref{assump:CompactContinuity}(i)(iii) and \ref{assump:StationaryBoundedReward} hold. Then there exists a unique $w^* \in \mcU_b(\hat{\bfX})$ such that $w^* = \msB w^*$. For $w_t$ and $w_t^u$ defined in \eqref{eq-wt} and \eqref{eq-wut},
    we have
    \[
    w^*(x,\alpha) = \lim_{T\to\infty} w_T(x,\alpha) = \lim_{T\to\infty} w_T^u(x,\alpha) = J_{\alpha,\infty}^*(x),\quad \forall (x,\alpha) \in \bfX\times[0,1),
    \]
    and $w^*(x,1) = +\infty$ for all $x\in\bfX$.
\end{thm}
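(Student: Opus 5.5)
The plan is to first obtain $w^*$ from the Banach fixed-point theorem, and then identify it with $J_{\alpha,\infty}^*$ by comparison with finite-horizon problems whose truncation error is controlled uniformly.

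\textbf{Step 1: the fixed point and the limits.} By Lemma \ref{lemma:msBProperties}, $(\mcU_b(\hat{\bfX}),d)$ is complete and $\msB$ is a $\gamma$-contraction on it. The Banach fixed point theorem then gives a unique $w^*\in\mcU_b(\hat{\bfX})$ with $w^*=\msB w^*$. It also gives $d(\msB^T w_0^u, w^*)\le \gamma^T d(w_0^u,w^*)\to 0$, since $w_0^u$ (zero on $\bfX\times[0,1)$, $+\infty$ at $\alpha=1$) lies in $\mcU_b(\hat{\bfX})$. Hence $w_T^u\to w^*$ uniformly on $\bfX\times[0,1)$. By Lemma \ref{lemma:msLtmsBtProperties}(i) and induction, as in Lemma \ref{lemma:vtvtuProperties}, we have $w_T=w_T^u$, so $w_T\to w^*$ as well. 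The value $w^*(x,1)=+\infty$ holds by (B4).

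\textbf{Step 2: $w_T$ as a finite-horizon value.} Apply Theorem \ref{thm:VaRDPTheorem} with horizon $T$, stationary rewards $r$, and terminal reward $g\equiv 0$ on $\bfX\times[0,1)$, $g(\cdot,1)=+\infty$. This $g$ belongs to $\mcU_u(\hat{\bfX})$. Time-shifting shows $v_0=w_T$, so $w_T(x,\alpha)=\sup_{\pi_T\in\Pi_T}\VaR_\alpha[\sum_{t=0}^{T-1}\gamma^t r(X_t,A_t)]$. Assumption \ref{assump:CompactContinuity}(ii) holds because $r$ is u.s.c. by Assumption \ref{assump:StationaryBoundedReward}. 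The auxiliary uniform variable $U_T$ is irrelevant because $g(\cdot,U_T)=0$ almost surely.

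\textbf{Step 3: comparison with the infinite horizon.} Set $\epsilon_T:=\gamma^T\bar r/(1-\gamma)$. For any $\pi\in\Pi$, the infinite sum $Z$ and its truncation $Z_T$ satisfy $|Z-Z_T|\le\epsilon_T$ pointwise. The law of $Z_T$ under $\mbP_x^\pi$ equals its law under $\mbP_x^{\pi_{0:T-1},T}$. Monotonicity and translation invariance of VaR then give $J_{\alpha,\infty}(x,\pi)\le w_T(x,\alpha)+\epsilon_T$. Conversely, any $\pi_T\in\Pi_T$ can be extended to some $\pi\in\Pi$ by appending an arbitrary selector $f\in\mbF$, which exists by measurable selection under Assumption \ref{assump:CompactContinuity}(i). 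This yields $J_{\alpha,\infty}(x,\pi)\ge J_{\alpha,T}(x,\pi_T)-\epsilon_T$. Taking suprema gives $|J_{\alpha,\infty}^*(x)-w_T(x,\alpha)|\le\epsilon_T$, and letting $T\to\infty$ together with Step 1 gives $J^*_{\alpha,\infty}=w^*$.

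\textbf{Main obstacle.} The main difficulty is Step 2, which requires checking that Theorem \ref{thm:VaRDPTheorem} applies verbatim. Its horizon-$T$ value at time $0$ must coincide with $w_T$ built forward from $w_0=0$: with stationary rewards, $v_t$ depends only on $T-t$, so $v_t=w_{T-t}$. The terminal $g$ must satisfy the $\mcU$ axioms, which requires care at $\alpha=0$ and $\alpha=1$. A secondary point is the existence of some measurable selector in $\mbF$ for the extension in Step 3.
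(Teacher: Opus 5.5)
Your proposal is correct and follows essentially the same route as the paper. Both arguments obtain $w^*$ by Banach's fixed-point theorem, show $w_T = w_T^u$ via Lemma~\ref{lemma:msLtmsBtProperties}(i), identify $w_T$ with the horizon-$T$ optimum with $g\equiv 0$ via Theorem~\ref{thm:VaRDPTheorem}, and close with the $\epsilon_T = \bar r\gamma^T/(1-\gamma)$ truncation sandwich. Your additional checks are details the paper leaves implicit: that $w_0^u$ and the terminal $g$ lie in the right function classes, that $v_t = w_{T-t}$ by stationarity, that Assumption~\ref{assump:CompactContinuity}(ii) follows from Assumption~\ref{assump:StationaryBoundedReward}, and that a selector exists for extending $\pi_T$.
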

\begin{proof}
    Following Lemma \ref{lemma:msBProperties}, by Banach's fixed point theorem, there exists a unique fixed point $w^* \in \mcU_b(\hat{\bfX})$ such that $w^* = \msB w^*$ and $w^* = \lim_{T\to\infty} w_T^u = \lim_{T\to\infty} \msB^T w_0^u$.
    
    Since $\mcU_b(\hat{\bfX}) \subset \mcU_u(\hat{\bfX})$, by Lemma~\ref{lemma:msLtmsBtProperties}(i), we have $\msL w = \msB w$ for any $w \in \mcU_b(\hat{\bfX})$. Moreover, $w_0(x,\alpha) = w_0^u(x,\alpha) = 0$ for $(x,\alpha) \in \bfX\times[0,1)$ and $w_0(x,1) = w_0^u(x,1) = +\infty$, we conclude that $w_T = w_T^u$ for all $T \ge 0$, and $\lim_{T\to\infty} w_T = w^*$. Following Theorem \ref{thm:VaRDPTheorem}, we also have $w_T(x,\alpha) = J_{\alpha,T}^*(x)$ with $g\equiv 0$ in \eqref{eq-OptimalObjective-FH} for all $x\in\bfX$.

    Fix $(x,\alpha)\in\bfX\times[0,1)$. For any $\pi\in \Pi$, define the random variables $Z_T^\pi:= \sum_{t=0}^{T-1} \gamma^t r(X_t,A_t)$ and $Z_\infty^\pi := \sum_{t=0}^\infty \gamma^t r(X_t, A_t)$, where the actions follow the policy $\pi$ and $X_0 = x$, $X_{t+1} \sim Q(\cdot|X_t,A_t)$. By Assumption \ref{assump:StationaryBoundedReward}, we have $|Z_\infty^\pi - Z_T^\pi| \le \sum_{t=T}^\infty \gamma^t \bar{r} = \frac{\bar{r}\gamma^T}{1-\gamma} =: \epsilon_T$. Let $\pi_{0:T-1} \in \Pi_T$ be the truncated policy of $\pi$ from time $0$ to $T$, and $J_{\alpha,T}(x,\pi_{0:T-1})$ be the objective function \eqref{eq-objectivefunction-FH} with $g\equiv 0$.
    By the Lipschitz property of VaR, we have
    \begin{align*}
        |J_{\alpha,\infty}(x,\pi) - J_{\alpha,T}(x,\pi_{0:T-1})| = |\VaR_\alpha[Z_\infty^\pi] - \VaR_\alpha[Z_T^\pi]| \le \norm{Z_\infty^\pi - Z_T^\pi}_\infty \le \epsilon_T.
    \end{align*}
    This implies that 
    \begin{align*}
        J_{\alpha,\infty}(x,\pi) \le J_{\alpha,T}(x,\pi_{0:T-1}) + \epsilon_T \le \sup_{\pi_T \in \Pi_T} J_{\alpha,T}(x,\pi_T) + \epsilon_T = w_T(x,\alpha) + \epsilon_T.
    \end{align*}
    Taking the supremum over $\pi \in \Pi$, we obtain that $J_{\alpha,\infty}^*(x) \le w_T(x,\alpha) + \epsilon_T$ for all $(x,\alpha) \in \bfX\times[0,1)$. 
    Conversely, for any $\pi_T \in \Pi_T$, it can always been augmented into an infinite horizon policy $\bar{\pi} \in \Pi$ such that $\bar{\pi}_{0:T-1} = \pi_T$. Clearly, we also have $|J_{\alpha,T}(x,\pi_T) - J_{\alpha,\infty}(x,\bar{\pi})| \le \epsilon_T$. Hence
    \[
    J_{\alpha, T}(x,\pi_T) \le J_{\alpha,\infty}(x,\bar{\pi}) + \epsilon_T \le J_{\alpha,\infty}^*(x) + \epsilon_T.
    \]
    Taking the supremum over $\pi_T \in \Pi_T$, we obtain that $w_T(x,\alpha) \le J_{\alpha,\infty}^*(x) + \epsilon_T$ for all $(x,\alpha) \in \bfX\times[0,1)$. Therefore, we have $|w_T(x,\alpha) - J_{\alpha,\infty}^*(x)| \le \epsilon_T$. 
    Since $\epsilon_T \to 0$ as $T\to\infty$, we conclude that $\lim_{T\to\infty} w_T(x,\alpha) = J_{\alpha,\infty}^*(x)$.
    This completes the proof.
\end{proof} 

We have a similar result regarding the optimal policy to Corollary~\ref{cor:MarkovHistoryPolicy}.
\begin{cor}
    Suppose Assumptions \ref{assump:CompactContinuity}(i)(iii) and \ref{assump:StationaryBoundedReward} hold. Let $w^*$ be the unique fixed point in Theorem \ref{thm:VaRDPInfiniteHorizon}. 
    Then there exists a selector $\hat{f} \in \hat{\mbF}$ satisfying
    \begin{equation}\label{eq-hatfinargmaxLwstar}
        \hat{f}(x,\alpha) \in \argmax_{a\in\bfA(x)} (L w^*)(x,\alpha,a).
    \end{equation}
    For a fixed initial risk level $\alpha \in [0,1)$, set $\rho_0^\alpha(h_0) := \alpha$ for any $h_0 = (x_0) \in \bfH_0$. Given any history $h_{t+1} = (x_0,a_0,\cdots,x_t,a_t,x_{t+1}) \in \bfH_{t+1}$, recursively define
    \begin{equation}\label{eq-rhotalphaforwstar}
        \rho_{t+1}^\alpha (h_{t+1}) := \min\left\{ \bar{\alpha} \in [0,1] : \gamma w^*(x_{t+1},\bar{\alpha}) \ge w^*(x_t,\rho_t^\alpha(h_t)) - r(x_t,\hat{f}(x_t,\rho_t^\alpha(h_t)) \right\},
    \end{equation}
    for $t\in\mbN$. Let $\pi^\alpha := (\varpi_0^\alpha,\varpi_1^\alpha,\cdots)$ induced by
    \[
    \varpi_t^\alpha(\cdot|h_t) := \delta_{\hat{f}(x_t,\rho_t^\alpha(h_t))}(\cdot).
    \]
    Then $\pi^\alpha \in \Pi$, and $J_{\alpha,\infty}(x,\pi^\alpha) = w^*(x,\alpha) = J_{\alpha,\infty}^*(x)$ for all $x\in\bfX$. Thus, $\pi^\alpha$ is optimal for the infinite horizon problem \eqref{eq-ObjectiveFunction-IH}. The same conclusion holds when $L$ is replaced by $B$.
\end{cor}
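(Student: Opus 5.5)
The plan is to combine the finite-horizon optimality construction (Step 3 of Theorem \ref{thm:VaRDPTheorem}) with the $\epsilon_T$ truncation estimate used in Theorem \ref{thm:VaRDPInfiniteHorizon}.

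\textbf{Selector and admissibility.} Since $w^*\in\mcU_b(\hat{\bfX})\subset\mcU_u(\hat{\bfX})$, Lemma \ref{lemma:msLtmsBtProperties}(iv) gives a measurable selector $\hat f\in\hat{\mbF}$ attaining the supremum in $\msL w^*$. Lemma \ref{lemma:msLtmsBtProperties}(i) shows that the same $\hat f$ attains the supremum for $B$, which handles the final sentence of the statement. Because $w^*=\msB w^*=\msL w^*$, Theorem \ref{thm:VaRDecomposition} yields a maximizing $\bar\zeta_{x,\alpha}\in\mcZ_\alpha(Q(\cdot|x,\hat f(x,\alpha)))$. This makes the set in \eqref{eq-rhotalphaforwstar} nonempty. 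The set is closed because $w^*$ is u.s.c., hence right-continuous and nondecreasing in $\alpha$, so the minimum is attained. The resulting map $\mfa(x,\alpha,x')\le\bar\zeta_{x,\alpha}(x')$ therefore lies in $\mcZ_\alpha$. Measurability of $\rho_t^\alpha$ and admissibility of $\pi^\alpha$ then follow exactly as in Lemma \ref{lemma:NewPolicyMeasurable}.

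\textbf{Finite-horizon comparison.} Since $J^*_{\alpha,\infty}(x)=w^*(x,\alpha)$ by Theorem \ref{thm:VaRDPInfiniteHorizon}, it suffices to show $J_{\alpha,\infty}(x,\pi^\alpha)\ge w^*(x,\alpha)$. Fix $T$ and view the truncation $\pi^\alpha_{0:T-1}$ as a $T$-horizon policy for the problem with stationary reward $r$ and terminal reward $g:=w^*$. This $g$ belongs to $\mcU_u(\hat{\bfX})$. With this terminal reward, the backward recursion \eqref{eq-Bellman-FH} gives $v_t=w^*$ for every $t$, and $\mfa_{t+1}$ coincides with $\mfa$. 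Hence $\pi^\alpha_{0:T-1}$ is precisely the policy $\pi^\alpha_{0:T-1}$ of \eqref{eq-pisT-1alpha}, up to the choice of selector, which I would fix as $\hat f_t=\hat f$. Step 3 of Theorem \ref{thm:VaRDPTheorem} then gives
\[
\VaR_\alpha\Big[Z_T^{\pi^\alpha}+\gamma^T w^*(X_T,U_T)\Big]\ge w^*(x,\alpha).
\]

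\textbf{Removing the terminal term.} This is the step I expect to be the main obstacle. On $U_T<1$ we have $|w^*|\le M$, so almost surely $|\gamma^T w^*(X_T,U_T)|\le\gamma^T M$. Together with $|Z_\infty-Z_T|\le\epsilon_T$, monotonicity and translation invariance of VaR give
\[
J_{\alpha,\infty}(x,\pi^\alpha)\ge w^*(x,\alpha)-\epsilon_T-\gamma^T M.
\]
Letting $T\to\infty$ yields the claim. The delicate point is that Step 3 was written for $\bar{\mbP}$ with the auxiliary uniform variable $U_T$ independent of the trajectory. Since that variable enters only the bounded terminal term, it does not affect the bound. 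I would also double-check that the infinite-horizon measure's $T$-marginal, under $\pi^\alpha$, agrees with the finite-horizon measure, which holds by the Ionescu-Tulcea consistency relation.
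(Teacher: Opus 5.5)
Your proposal is correct and follows the paper's proof essentially step for step. You take the greedy selector from Lemma~\ref{lemma:msLtmsBtProperties}(iv), recognize $\pi^\alpha_{0:T-1}$ as the finite-horizon optimal policy for terminal reward $g=w^*$ (so that $v_t\equiv w^*$), and then remove the terminal term with an $O(\gamma^T)$ VaR-Lipschitz estimate; the only cosmetic differences are that you use just the one-sided bound from Step~3 together with $J_{\alpha,\infty}(x,\pi^\alpha)\le J^*_{\alpha,\infty}(x)=w^*(x,\alpha)$ where the paper invokes the full equality from Theorem~\ref{thm:VaRDPTheorem} and Corollary~\ref{cor:MarkovHistoryPolicy}, and that you bound $|w^*|$ by the $\mcU_b(\hat{\bfX})$ constant $M$ rather than by $\bar r/(1-\gamma)$.
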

\begin{proof}
    Since $w^* \in \mcU_b(\hat{\bfX})\subseteq\mcU_u(\hat{\bfX})$, Lemma \ref{lemma:msLtmsBtProperties}(iv) guarantees the existence of a selector $\hat{f}\in\hat{\mbF}$ satisfying \eqref{eq-hatfinargmaxLwstar}. Moreover, by Lemma \ref{lemma:msLtmsBtProperties}(i), we have $Lw^* = Bw^*$, so the same selector is greedy for both formulations. 

    Since $w^* = L w^*$, the construction in Step 1 of the proof of Theorem \ref{thm:VaRDPTheorem} applies with $v_t = v_{t+1} := w^*$, $L_t := L$, $r_t := r$, and $\hat{f}_t := \hat{f}$. Consequently, for every $(x,\alpha,x') \in \bfX\times[0,1)\times\bfX$, the set
    \[
    D(x,\alpha,x') := \left\{ \bar{\alpha} \in [0,1]: \gamma w^*(x',\bar{\alpha}) \ge w^*(x,\alpha) - r(x,\hat{f}(x,\alpha))\right\},
    \]
    is nonempty, and its minimum $\mfa(x,\alpha,x') := \min D(x,\alpha,x')$ is well-defined. Moreover, $\mfa(x,\alpha,\cdot) \in \mcZ_\alpha(Q(\cdot|x,\hat{f}(x,\alpha)))$. By Lemma \ref{lemma:NewPolicyMeasurable}, $\mfa$ is measurable, and the recursively defined mapping $\rho_t^\alpha$ are also measurable, and the kernels $\varpi_t^\alpha(\cdot|h_t)$ define an admissible policy $\pi^\alpha \in \Pi$.

    Fix $T \in \mbN_+$ and let $U_T\sim\mathrm{Unif}[0,1]$ be independent of the controlled state-action process. Define
    \[
    Z_T^{\pi^\alpha} := \sum_{t=0}^{T-1} \gamma^t r(X_t,A_t) + \gamma^T w^*(X_T,U_t),
    \]
    where the action process $\{A_t\}_{t=0}^{T-1}$ follows the policy $\pi^\alpha$ from time $0$ to $T-1$.
    Consider the finite-horizon problem with terminal value $\bar{v}_T(x,\alpha) := w^*(x,\alpha)$ for all $(x,\alpha) \in \hat{\bfX}$, and backward recursion $\bar{v}_t = \msB \bar{v}_{t+1}$ for $t \in [T-1]$. Since $w^* = \msB w^*$, we have $\bar{v}_t = w^*$ for all $t\in [T]$. Therefore, the selector $\hat{f}$ is greedy at every stage, and the risk-level update in Corollary \ref{cor:MarkovHistoryPolicy} is exactly \eqref{eq-rhotalphaforwstar}. Since $w^* \in \mcU_b(\hat{\bfX}) \subseteq \mcU(\hat{\bfX})$, $\bar{v}_t$ is equivalent to $v_t^u$ in \eqref{eq-BellmanNested-FH} with $v_T^u = g = w^*$.
    Then by Theorem \ref{thm:VaRDPTheorem} and Corollary \ref{cor:MarkovHistoryPolicy}, we have
    \begin{equation}\label{eq-tildev=VaRsumgammar+gammaTwstar}
        \bar{v}_0(x,\alpha) = \VaR_\alpha^{\bar{\mbP}_x^{\pi_T^\alpha}}\left[ \sum_{t=0}^{T-1} \gamma^t r(X_t,A_t) + \gamma^T w^*(X_T,U_T) \right] = w^*(x,\alpha),\quad \forall (x,\alpha) \in \hat{\bfX}.
    \end{equation}
    where $\pi_T^\alpha$ is equivalent to $\pi^\alpha$ being restricted from $0$ to $T-1$, i.e., $\pi_T^\alpha = \pi^\alpha_{0:T-1}$.

    Let 
    \[
    Z_\infty^{\pi^\alpha} := \sum_{t=0}^\infty \gamma^t r(X_t,A_t),\quad
    Z_T^{\pi_T^\alpha} := \sum_{t=0}^{T-1} \gamma^t r(X_t,A_t) + \gamma^T w^*(X_T,U_T),
    \]
    where the system processes are driven by policies $\pi^\alpha$ and $\pi_T^\alpha$, respectively.
    Since $|r(x,a)| \le \bar{r}$, $w^* \in \mcU_b(\hat{\bfX})$ and from Theorem \ref{thm:VaRDPInfiniteHorizon}, $|w^*(x,\alpha)| = |J_{\alpha,\infty}^*(x)| \le \frac{\bar{r}}{1-\gamma}$ for $(x,\alpha)\in\bfX\times[0,1)$, we have almost surely,
    \begin{align*}
        |Z_T^{\pi_T^\alpha} - Z_\infty^{\pi^\alpha}| \le \gamma^T |w^*(X_T,U_T)| + \sum_{t=T}^\infty \gamma^t \bar{r} \le \frac{\bar{r} \gamma^T}{1-\gamma} + \frac{\bar{r}\gamma^T}{1-\gamma} = \frac{2\bar{r}\gamma^T}{1-\gamma}.
    \end{align*}
    By the Lipschitz property of VaR, we have
    \[
    |\VaR_\alpha[Z_T^{\pi_T^\alpha}] - J_{\alpha,\infty}(x,\pi^\alpha)| \le \frac{2\bar{r}\gamma^T}{1-\gamma}.
    \]
    Letting $T\to\infty$, combining with \eqref{eq-tildev=VaRsumgammar+gammaTwstar}, we obtain that $w^*(x,\alpha) = J_{\alpha,\infty}(x,\pi^\alpha)$. Thus by Theorem \ref{thm:VaRDPInfiniteHorizon}, $J_{\alpha,\infty}(x,\pi^\alpha) = J_{\alpha,\infty}^*(x)$, so $\pi^\alpha$ is optimal. For $\alpha = 1$, the result follows directly from the convention that both sides equal $+\infty$. This completes the proof.
\end{proof}

\section{Application: VaR Optimal Execution}\label{sec:VaRExecution}
In this section, we consider an optimal execution problem with market impact in a continuous state space under the VaR criterion, building on the framework proposed in \cite{Almgren2001jor}.

Suppose we initially hold $x_0:=\bar{x}\ge 0$ shares of an infinitely divisible asset (e.g., a cryptocurrency) and seek to liquidate the entire position by time $T$. The horizon is divided into $N$ equal stages, with decision stage $k\in\{1,\cdots,N\}$. Let $x_k$ denote the remaining inventory and $n_k\ge 0$ the number of shares sold at stage $k$, subject to the terminal liquidation constraint $x_N=0$. The asset price $S_k$ evolves stochastically and is affected by market impact, so larger sales induce greater price declines. Our objective is to determine a liquidation strategy that maximizes the $\alpha$-VaR of total execution revenue for a given $\alpha\in(0,1)$.

At each stage $k\ge 1$, the asset price follows the arithmetic random walk
\[
    S_k = S_{k-1} + \sigma \sqrt{\tau} Z_k -\tau g\left(\frac{n_k}{\tau}\right),
\]
where $\sigma > 0$ is the volatility parameter, $\{Z_k\}_{k\in\mbN_+}$ are i.i.d. standard normal random variables, and $g(v)$ denotes the permanent market impact at trading rate $v:=n_k/\tau$, with $\tau:=T/N$.
Temporary market impact is incorporated through the effective execution price
\[
    \tilde S_k:=S_{k-1}-h\left(\frac{n_k}{\tau}\right),
\]
where $h(v)$ captures the transient price concession induced by the current trade and does not affect subsequent asset prices.
At stage $k$, we first execute $n_k$ shares at price $\tilde{S}_k$ and then observe the next market price $S_{k+1}$.
Accordingly, the total revenue generated by the liquidation strategy is $\sum_{k=1}^N n_k\tilde S_k$.
Table \ref{table-3StageExecutionModel} illustrates the evolution of asset prices and inventories in a three-stage execution model.

\begin{table}[htbp]
    \centering
    \caption{Price and share evolutions of a three-stage execution model}
    \label{table-3StageExecutionModel}
    \resizebox{\textwidth}{!}{
    \begin{tabular}{ccccc}
    \toprule 
    Stage & 0 & 1 & 2 & 3\\
    \midrule
    Market price & $S_0$  & $S_1 = S_0 + \sigma\sqrt{\tau}Z_1 - \tau g(n_1/\tau)$ & $S_2 = S_1 + \sigma \sqrt{\tau}Z_2 - \tau g(n_2/\tau)$ &  \\
    Execution price & &  $\tilde{S}_1 = S_0 - h(n_1/\tau)$ & $\tilde{S}_2 = S_1 - h(n_2/\tau)$ & $\tilde{S}_3 = S_2 - h(n_3/\tau)$\\
    Inventory & $x_0 = \bar{x}$ & $x_1 = \bar{x} - n_1$ & $x_2 = \bar{x} - n_1 - n_2$ & $x_3 = 0$\\
    \bottomrule
    \end{tabular}
    }
\end{table}

We consider an $N$-stage execution problem with linear market impact: $g(v) = \kappa v$ and $h(v) = \epsilon + \eta v$, for fixed parameters $\kappa$, $\eta$ and $\epsilon$. Note that when $N=1$, the only strategy available is to sell everything at stage 1, i.e., $n_1 = \bar{x}$, which is less interesting. Therefore in the following, we assume that $N \ge 2$.

The corresponding risk-neutral problem reduces to a deterministic quadratic program \citep{Almgren2001jor}. In contrast, the VaR objective yields a substantially more challenging stochastic optimization problem.

In the following, We develop two solution approaches. The first optimizes over static strategies, which is standard in discrete-time optimal execution, but is generally suboptimal. The second approach applies the VaR dynamic programming framework developed in Section~\ref{sec:VaRDPFinite}. We show that the resulting dynamic strategies outperform their static counterparts while remaining computationally tractable.

\subsection{Static Strategy}\label{sec:VaROEStatic}
We first consider static strategies, under which all trading decisions are determined prior to the start of execution. The optimal static strategy can be characterized through the efficient frontier associated with the analytical solution of the mean-variance model (see Section 3.2 of \cite{Almgren2001jor}). In general, however, no closed-form solution is available. In this subsection, we show that an analytical solution for the optimal static strategy can be derived in the special case when $\kappa \ge 2\eta/\tau$. In the next subsection, we further show that the optimal dynamic strategy also admits an analytical solution under this regime, thereby allowing for an explicit comparison between the static and dynamic strategies.

An admissible static strategy is represented by $\bmn := (n_1,\cdots, n_N)^\top \in \mbR^{N}$, where $n_k \ge 0$ for all $1\le k\le N$ and $\sum_{k=1}^N n_k = \bar{x}$. The inventory at stage $k$ is $x_k = \bar{x} - \sum_{i=1}^k n_i$ and $x_N = 0$.
Then following Section 1.5 in \cite{Almgren2001jor}, the total profit under this strategy can be written as
\begin{equation*}
\begin{aligned}
    R(\bmn) &:= \sum_{k=1}^N n_k \tilde{S}_k
    = \left(S_0 - \epsilon - \frac{1}{2}\kappa \bar{x}\right)\bar{x} - \left( \frac{\eta}{\tau} - \frac{\kappa}{2} \right) \sum_{k=1}^{N} n_k^2 + \sigma \sqrt{\tau} \sum_{k=1}^{N-1} x_k Z_k \\
    & \sim \mcN\left(\mfA(\bmn), \sigma^2 \tau \sum_{k=1}^{N-1} x_k^2 \right),
\end{aligned}
\end{equation*}
where $\mfA(\bmn)$ is the deterministic component of $\bmn$, defined as
\begin{equation*}
\begin{aligned}
    \mfA(\bmn) := &  C(S_0,\bar{x}) - \frac{\mfp}{2} \left( \bar{x}^2 - \sum_{k=1}^N n_k^2 \right),
\end{aligned}
\end{equation*}
where $C(s,x) := (s - \epsilon - \frac{\eta x}{\tau})x$ for all $s,x \in \mbR$, and $\mfp:= \kappa- \frac{2\eta}{\tau}$.
The $\alpha$-VaR of $R(\bmn)$ is then
\begin{equation}\label{eq-OEStaticStrategy}
    \VaR_\alpha[R(\bmn)] = \mfA(\bmn) + \sigma \sqrt{\tau \sum_{k=1}^{N-1} x_k^2} \Phi^{-1}(\alpha).
\end{equation}

When $\mfp < 0$, $\mfA$ is concave in $n_k$. The risk term (second term) in \eqref{eq-OEStaticStrategy} is concave for $\alpha<1/2$ and convex for $\alpha>1/2$, precluding a closed-form solution in general. At $\alpha=1/2$, this term vanishes, yielding an optimal equal-trading solution $n_k =\bar{x}/N$ for all $1\le k\le N$. Otherwise, the problem must typically be solved numerically or via mathematical programming software.

When $\mfp \ge 0$, noting that $\sum_{k=1}^N n_k = \bar{x}$, and $\{n_i\}_{i=1}^N$ are non-negative, we have
\begin{align*}
    \mfA(\bmn) = C(S_0,\bar{x}) + \frac{\mfp}{2} \left( \sum_{k=1}^N n_k^2 - \bar{x}^2 \right) \le C(S_0,\bar{x}).
\end{align*}
Moreover, since $x_k \in [0,\bar{x}]$, 
\[
    0\le \sqrt{\tau \sum_{k=1}^{N-1} x_k^2} \le \bar{x}\sqrt{\tau(N-1)},
\]
with the second equality attained if and only if $n_N = \bar{x}$.
For $\alpha < 1/2$, $\VaR_\alpha[R(\bmn)] \le C(S_0,\bar{x})$, with equality attained by taking $n_1 = \bar{x}$, which eliminates the risk term. For $\alpha = 1/2$, the same upper bound holds for $\VaR_\alpha[R(\bmn)]$ and the equality is attained by letting $n_k = \bar{x}$ for one $k \in \{1,\cdots,N\}$.
For $\alpha > 1/2$, we have $\VaR_\alpha[R(\bmn)] \le C(S_0,\bar{x}) + \sigma\bar{x}\sqrt{\tau(N-1)}\Phi^{-1}(\alpha)$, with equality attained by setting $n_N = \bar{x}$.
As a result,
\begin{equation}\label{eq-OEStaticObj}
\begin{aligned}
    J_{\mathrm{stat}}^*(S_0,\bar{x},\alpha) &:= \max_{\bmn} 
    \VaR_\alpha[R(\bmn)] 
    = C(S_0,\bar{x}) + \sigma \bar{x} \sqrt{\tau(N-1)} (\Phi^{-1}(\alpha))_+.
\end{aligned}
\end{equation}
Therefore, when $\mfp \ge 0$, the optimal static strategy is a one-shot liquidation policy: all shares are sold at the initial stage for $\alpha<0.5$ and at the terminal stage for $\alpha>0.5$. At $\alpha=0.5$, the liquidation may occur at any stage.
This also includes the special case $\kappa=\eta=0$, corresponding to zero market impact.

\subsection{Dynamic Strategy}\label{sec:VaROEDynamic}
Now we consider deriving an optimal dynamic strategy by applying the framework in Section \ref{sec:VaRDPFinite}. Let $s, x$ stand for the current market price and inventory level.
Following \eqref{eq-Bellman-FH}, at stage $N$, since the only option is to sell all the inventory, we can define the value function as 
\begin{align*}
    v_N(s,x,\theta) := \left(s - h\left( \frac{x}{\tau} \right)\right) x
    = \left(s - \epsilon  - \frac{\eta x}{\tau} \right)x = C(s,x),
\end{align*}
where $\theta \in [0,1)$ is the risk level. 

At stage $N-1$, suppose the number of shares to be executed is $n \in [0,x]$. Then define
\begin{equation}\label{eq-funqN-1}
\begin{aligned}
    q_{N-1}(s,x,\theta,n) &:= \left(s - h\left(\frac{n}{\tau}\right)\right)n +\VaR_\theta\left[v_N\left(s + \sigma\sqrt{\tau}Z_{N-1} - \tau g\left(\frac{n}{\tau}\right), x - n ,U_N\right)\right]\\
    &= \mfp n^2 -\mfp x n + C(s,x) + \sigma\sqrt{\tau}(x-n)\mfB_{N-1}(\theta),\quad \theta \in [0,1),
\end{aligned}
\end{equation}
where $\{U_k\}_{k=1}^N$ is a sequence of i.i.d. random variables with $U_k \sim \mathrm{Unif}[0,1]$, and $\mfB_{N-1}(\theta) := \VaR_\theta[Z_{N-1}]$ for $\theta \in [0,1)$.

In the following, we consider the two regimes determined by the sign of $\mfp$. Specifically, Regime 1 corresponds to $\mfp \ge 0$, whereas Regime 2 corresponds to $\mfp < 0$.

\subsubsection{Regime 1}\label{sec:VaROEDynamicRegime1}
When $\mfp \ge 0$, $q_{N-1}$ is convex on $n$, and its maximum is attained at the boundaries. For $\theta \in [0,1)$, define
\begin{align*}
    v_{N-1}(s,x,\theta) &:= \max_{n \in [0,x]} q_{N-1}(s,x,\theta,n)
    = C(s,x) + \sigma x \sqrt{\tau} (\mfB_{N-1}(\theta))_+.
\end{align*}
This implies that if $\theta > 0.5$, the optimal strategy at stage $N-1$ is to wait until the next stage, and liquidate all the inventory otherwise. 

A similar representation extends to all stages. Specifically, following \eqref{eq-Bellman-FH}, the Bellman recursion and backward induction yield, for $k = N-2,\cdots, 1$, 
\begin{equation}\label{eq-DynamicRegime1qk}
    \begin{aligned}
        q_k(s,x,\theta,n) &:= \left(s - h\left(\frac{n}{\tau}\right)\right)n + \VaR_\theta\left[v_{k+1}\left(s+\sigma \sqrt{\tau}Z_{k} - \tau g\left(\frac{n}{\tau}\right), x-n, U_{k+1}\right)\right]\\
        &=\mfp n^2 - \mfp xn + C(s,x) + \sigma\sqrt{\tau}(x-n) \mfB_{k}(\theta), \quad \theta \in [0,1), n \in [0,x],
    \end{aligned}
\end{equation}
where
\begin{equation}\label{eq-mfBk}
    \mfB_{k}(\theta) := \VaR_\theta\left[ Z_k + (\mfB_{k+1}(U_{k+1}))_+ \right], \quad \theta \in [0,1),
\end{equation}

For $\theta \in (0,1)$, $q_k$ is convex in $n$ and therefore the value function is defined as
\begin{equation}\label{eq-DynamicRegime1vk}
    v_k(s,x,\theta) := \max_{n \in [0,x]} q_k(s,x,\theta,n)
    = C(s,x) + 
    \sigma x \sqrt{\tau} (\mfB_k(\theta))_+.
\end{equation}
At $\theta = 0$, $\mfB_k(0) = -\infty$, the optimal action is to liquidate all the inventory immediately, which gives 
\[
    v_k(s,x,0) := \max_{n\in[0,x]} q_k(s,x,0,n) = q_k(s,x,0,x) = C(s,x).
\]
This implies that \eqref{eq-DynamicRegime1vk} also holds for all $\theta \in [0,1)$.
An optimal policy therefore waits if $\mfB_k(\theta) > 0$ and liquidates all the inventory otherwise. The execution problem thus reduces to an optimal stopping problem, with objective value
\begin{equation}\label{eq-VaROEDynamicObjective}
    J_{\mathrm{dyna}}^*(S_0,\bar{x},\alpha) := v_1(S_0, \bar{x}, \alpha) = C(S_0,\bar{x})  + 
    \sigma \bar{x} \sqrt{\tau} (\mfB_1(\alpha))_+.
\end{equation}

To implement the optimal policy, we need to obtain optimal risk levels $\{\theta_k\}_{k=1}^{N-1}$ along the history. Set $\theta_1 := \alpha$. 
Suppose the optimal action is to wait at stage $1 \le k\le N-2$, i.e., $n_i^* = 0$ for $i=1,\cdots,k$, and the current market price, inventory and risk level are $(s_k,x_k,\theta_k)$ with a realization of $Z_k$ as $z_k \in \mbR$.
Then following Corollary \ref{cor:MarkovHistoryPolicy}, the optimal risk level for the next stage is given by
\begin{align*}
    \theta_{k+1} := \min\left\{ \bar{\theta} \in [0,1]: v_{k+1}(s_k + \sigma\sqrt{\tau} z_k, x_k, \bar{\theta}) \ge q_k(s_k,x_k,\theta_k,0) \right\}.
\end{align*}
Note that each $\mfB_k$ is continuous and strictly increasing on $(0,1)$ with range $\mbR$, since the underlying distribution has an everywhere positive continuous density. Following \eqref{eq-DynamicRegime1vk} and \eqref{eq-DynamicRegime1qk}, we have
\begin{align*}
    \theta_{k+1} = \begin{cases}
        \mfB_{k+1}^{-1}(\mfB_k(\theta_k) - z_k), & z_k < \mfB_k(\theta_k),\\
        0, & z_k \ge \mfB_k(\theta_k).
    \end{cases}
\end{align*}
In particular, all risk levels reached before liquidation lie in $[0,1)$.

This recursion yields a threshold rule based directly on cumulative price innovations. To formalize this, given a fixed threshold $b\in\mbR$, define the first passing time of the cumulative price innovation that reaches or exceeds $b$ as
\[
    t_b := \inf\left\{ k \in \mbN: S_k \ge b\right\},\quad S_k := \sum_{i=1}^k Z_i, \quad S_0:= 0.
\]
Then a threshold rule can be defined as: liquidate all the inventory at stage $1 + \min\{t_b, N-1\}$, i.e., liquidate immediately after the threshold is reached, or at the terminal stage if no earlier liquidation occurs.
The following result shows that the optimal dynamic strategy can be recovered by choosing $b = \mfB_1(\alpha)$.  

\begin{prop}\label{prop:DynamicThresholdStrategy}
    For every $N \ge 2$ and $\alpha \in (0,1)$, $\max_{b\in\mbR} \VaR_\alpha\left[ S_{t_b \wedge (N-1)} \right] = (\mfB_1(\alpha))_+$,
    with the maximum attained at $b = \mfB_1(\alpha)$.
    Moreover, $\mfB_1(\alpha) = \VaR_\alpha[M_{N-1}]$ for all $N \ge 2$., where $M_{N-1} := \max_{1\le k\le N-1} S_k$.
\end{prop}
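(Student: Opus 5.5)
The plan is to prove the second claim, $\mfB_1(\alpha)=\VaR_\alpha[M_{N-1}]$, first. I would then obtain the upper bound $\VaR_\alpha[S_{t_b\wedge(N-1)}]\le(\mfB_1(\alpha))_+$ from the optimality of the dynamic programming value in Theorem~\ref{thm:VaRDPTheorem}. Finally, I would check directly that $b=\mfB_1(\alpha)$ attains this bound.

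\emph{Step 1 (identification of $\mfB_k$).} I claim that $\mfB_k(\theta)=\VaR_\theta[M_{N-k}]$ for $k=N-1,\dots,1$. Here $M_m$ denotes the running maximum of $m$ i.i.d.\ standard normal increments.

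The base case $k=N-1$ is immediate, since $\mfB_{N-1}(\theta)=\VaR_\theta[Z_{N-1}]$ and $M_1\overset{d}{=}Z$.

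For the induction step, assume $\mfB_{k+1}(\theta)=\VaR_\theta[M_{N-k-1}]$. Then $\mfB_{k+1}$ is non-decreasing, right-continuous and real-valued on $(0,1)$. By Lemma~\ref{lemma:QuantileTransform}(ii) (the quantile representation), $\mfB_{k+1}(U_{k+1})$ has the law of $M_{N-k-1}$. Independence of $U_{k+1}$ and $Z_k$ then gives
\[
Z_k+(\mfB_{k+1}(U_{k+1}))_+\overset{d}{=}Z_1+\max\{0,M'_{N-k-1}\},
\]
where $M'_{N-k-1}$ is the maximum of the partial sums built from increments $Z_2,Z_3,\dots$. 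This is exactly the pathwise identity
\[
\max_{1\le j\le m}S_j = Z_1+\max\{0,\,Z_2,\,Z_2+Z_3,\dots\}.
\]
Hence $\mfB_k(\theta)=\VaR_\theta[M_{N-k}]$. Taking $k=1$ gives the second assertion of the proposition.

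\emph{Step 2 (upper bound).} Fix $b$ and consider the strategy that sells nothing until stage $1+(t_b\wedge(N-1))$ and then liquidates the whole position $\bar x$. Since no shares are sold before liquidation, the price accumulates no permanent impact. The market price at liquidation is therefore $S_0+\sigma\sqrt\tau\,S_{t_b\wedge(N-1)}$, and the revenue equals $C(S_0,\bar x)+\sigma\bar x\sqrt\tau\,S_{t_b\wedge(N-1)}$.

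This is an admissible history-dependent policy of the execution MDP, because $t_b$ is a stopping time of the observed price innovations. By Theorem~\ref{thm:VaRDPTheorem} and \eqref{eq-VaROEDynamicObjective}, its $\alpha$-VaR is at most $J^*_{\mathrm{dyna}}=C(S_0,\bar x)+\sigma\bar x\sqrt\tau(\mfB_1(\alpha))_+$. Translation invariance and positive homogeneity of VaR then yield $\VaR_\alpha[S_{t_b\wedge(N-1)}]\le(\mfB_1(\alpha))_+$ for every $b$, assuming $\bar x>0$ (the case $\bar x=0$ is trivial).

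\emph{Step 3 (attainment at $b^*:=\mfB_1(\alpha)$).} I split into two cases.

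If $b^*\le0$, then $t_{b^*}=0$ because $S_0=0\ge b^*$. So $S_{t_{b^*}\wedge(N-1)}=0$, whose VaR is $0=(b^*)_+$.

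If $b^*>0$, note that $S_{t_{b^*}\wedge(N-1)}<b^*$ can occur only when $t_{b^*}>N-1$, that is, only when $M_{N-1}<b^*$. Otherwise the stopped value is at least $b^*$. Therefore
\[
\mbP[S_{t_{b^*}\wedge(N-1)}<b^*]\le\mbP[M_{N-1}<b^*]\le\alpha,
\]
where the last inequality uses $b^*=\VaR_\alpha[M_{N-1}]$ from Step 1 and the fact that the supremum in \eqref{eq-ValueatRisk} is attained. By the definition of VaR this gives $\VaR_\alpha[S_{t_{b^*}\wedge(N-1)}]\ge b^*$, and Step 2 supplies the matching upper bound.

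The main obstacle I expect is Step 1. Transferring the law of $\mfB_{k+1}(U_{k+1})$ requires the regularity conditions (A1)--(A5) for $\mfB_{k+1}$, in particular right-continuity in $\theta$ and the endpoint convention at $\theta=0$, where $\mfB_{k+1}(0)=-\infty$. This is harmless because $U=0$ is a null event, but it should be noted explicitly. It also needs the distributional identity for the running maximum to be stated cleanly, with $M'$ independent of $Z_1$. Step 2 likewise requires a short verification that the threshold rule is measurable and admissible.
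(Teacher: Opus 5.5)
Your proof is correct. Step 1 follows the paper; Steps 2 and 3 take a different route.

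Step 1 is the paper's argument with its two inductions collapsed into one. The paper first shows $\mfB_1(\alpha)=\VaR_\alpha[Y_{N-1}]$ via the quantile transform, then $Y_{N-1}\overset{d}{=}M_{N-1}$ via $M_m=Z_1+(\bar M_{m-1})_+$. You only need to add the sentence that agreement of $\VaR_\theta$ for all $\theta\in(0,1)$ determines the law of $\mfB_{k+1}(U_{k+1})$.

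For the upper bound, the paper uses the pathwise domination $S_{t_b\wedge(N-1)}\le\max_{0\le j\le N-1}S_j=(M_{N-1})_+$. It combines this with monotonicity of VaR and $\VaR_\alpha[(M_{N-1})_+]=(\VaR_\alpha[M_{N-1}])_+$. You instead embed the threshold rule as an admissible policy of the execution MDP and invoke Theorem~\ref{thm:VaRDPTheorem} with \eqref{eq-VaROEDynamicObjective}. This is valid and not circular, since the Regime~1 value computation does not use the proposition. It does, however, bring in the full DP machinery to get a bound that holds path by path: you must verify Assumption~\ref{assump:CompactContinuity} for the execution model and check measurability of the rule.

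The proposition is a statement about the Gaussian random walk alone. So you should say explicitly that you fix an auxiliary Regime~1 instance, with $\mfp\ge0$ (e.g.\ $\kappa=\eta=0$) and $\sigma,\bar x>0$. Your remark that ``$\bar x=0$ is trivial'' misframes this: at $\bar x=0$ the MDP says nothing about $S_{t_b\wedge(N-1)}$, so you simply choose $\bar x>0$.

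For attainment, the paper builds an integral recursion for $F_m(b,z)=\mbP[S_{t_b\wedge m}<z]$. From it, the paper shows $F_m(b,b)=F_{Y_m}(b)$ and that $z\mapsto F_m(b,z)$ is continuous and strictly increasing, and then concludes $\VaR_\alpha[S_{t_b\wedge m}]=b$ directly. Your observation is that, for $b^*>0$, $S_{t_{b^*}\wedge(N-1)}<b^*$ forces $M_{N-1}<b^*$; the two events in fact coincide. This gives $F_{N-1}(b^*,b^*)=\mbP[M_{N-1}<b^*]$ in one line. Because you already have the upper bound and only need $\ge b^*$, you need no continuity or strict monotonicity at all, only attainment of the supremum in \eqref{eq-ValueatRisk}.

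Pairing your Step 3 with the paper's pathwise upper bound gives the shortest complete proof.
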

\begin{proof}
    For $k \in \mbN$, $b\in\mbR$, and $1\le m\le N-1$, denote
    \begin{equation}\label{eq-defnSkRmYm}
        R_m(b) := S_{t_b\wedge m}, \quad Y_m := Z_{N-m} + (\bar{Y}_{m-1})_+,
    \end{equation}
    with $\bar{Y}_{m-1}$ as an independent copy of $Y_{m-1}$ and $Y_0 := 0$. Set $M_0 := 0$.

    We first show that 
    \begin{equation}\label{eq-mfB1=VaRYN-1}
        \mfB_1(\alpha) = \VaR_\alpha[Y_{N-1}],\quad \forall \alpha \in (0,1).
    \end{equation}
    We prove it using mathematical induction. Note that by definition, we have $\mfB_{N-1}(\alpha) = \VaR_\alpha[Z_{N-1}]$ for $\alpha \in (0,1)$. Now assume it holds for some $1\le m \le N-2$ that
    \begin{equation}\label{eq-mfBNm=VaRuYm}
        \mfB_{N-m}(u) = \VaR_u[Y_m] =: q_m(u),\quad \forall u \in (0,1).
    \end{equation}
    Then by the quantile transformation (Lemma A.23 in \cite{Follmer2016book}), we have $q_m(U_{N-m}) \overset{d}{=} Y_m$, where $\overset{d}{=}$ stands for equality in distribution. Then following \eqref{eq-mfBk}, this implies that
    \begin{align*}
        \VaR_\alpha[Y_{m+1}] &= \VaR_\alpha[Z_{N-m-1} + (\bar{Y}_m)_+] = \VaR_\alpha[Z_{N-m-1} + (q_m(U_{N-m}))_+] \\
        &= \VaR_\alpha[Z_{N-m-1} + (\mfB_{N-m}(U_{N-m}))_+] = \mfB_{N-m-1}(\alpha).
    \end{align*}
    Hence we conclude that \eqref{eq-mfBNm=VaRuYm} holds for $1 \le m \le N-1$ and \eqref{eq-mfB1=VaRYN-1} follows.
    
    We next show that $Y_{N-1} \overset{d}{=}M_{N-1}$. We prove by induction that $Y_m \overset{d}{=} M_m$ for $1 \le m \le N-1$. Indeed, for $m=1$, we have $Y_1 = Z_{N-1} \overset{d}{=} Z_1 = M_1$. For $m\ge 2$, define $\bar{M}_{m-1} := \max_{1 \le k \le m-1} \sum_{i=2}^{k+1}Z_i$. Then $\bar{M}_{m-1} \overset{d}{=} M_{m-1}$, independent of $Z_1$, and satisfies
    \begin{equation}
        M_m = \max\{ Z_1, Z_1 + Z_2, \cdots, Z_1 + \cdots+ Z_m\} = Z_1 + (\bar{M}_{m-1})_+.
    \end{equation}
    If $Y_{m-1} \overset{d}{=} M_{m-1}$, then following \eqref{eq-defnSkRmYm}, the fact that $Z_1\overset{d}{=}Z_{N-m}$, we obtain that $Y_m \overset{d}{=} M_m$. Hence the claim holds for every $1\le m\le N-1$. Combining with \eqref{eq-mfB1=VaRYN-1}, we conclude that $\mfB_1(\alpha) = \VaR_\alpha[M_{N-1}]$.

    For any $b \in \mbR$, we have $R_m(b) \le \max_{0\le j\le m} S_j = (M_m)_+$. By the monotonicity and positive homogeneity of VaR, we have $\VaR_\alpha[R_m(b)] \le \VaR_\alpha[(M_m)_+] = (\mfB_1(\alpha))_+$. It remains to show the attainment. 
    
    Set $b = \mfB_1(\alpha)$. If $b \le 0$, then $t_b = 0$, so $R_m(b) = 0$, attaining the upper bound.

    Now suppose $b > 0$. Define $F_m(b,z) := \mbP[R_m(b) < z]$ and $F_{Y_m}(z) := \mbP[Y_m < z]$ for $z \in \mbR$ and $1\le m\le N-1$. We show by induction that
    \begin{equation}\label{eq-Fmbb=FYmb}
        F_m(b,b) = F_{Y_m}(b),\quad \forall 1\le m\le N-1.
    \end{equation}
    The identity clearly holds for $m = 1$. Let $m \ge 2$, suppose the equality holds for $m-1$.
    Note that $R_m(b) = Z_1 + \bm{1}_{\{Z_1 < b\}} \bar{R}_m(b-Z_1)$ where we define $\bar{R}_m(z) := \bar{S}_{\bar{t}_z \wedge m}$, $\bar{S}_k := \sum_{i=2}^{k+1} Z_i$ for $1\le k\le m$ and $\bar{S}_0 := 0$, $\bar{t}_z := \inf\{ k\ge 0: \bar{S}_k \ge z\}$, for all $z\in\mbR$. Clearly, by construction, we have $\bar{R}_m(b) \overset{d}{=} R_m(b)$ and is independent of $Z_1$.
    Then we have
    \begin{equation}\label{eq-IntegralRecursion}
    \begin{aligned}
        F_m(b,z) &= \int_{-\infty}^\infty \mbP[R_m(b) < z \mid Z_1 = t] \varphi(t) dt
        = \int_{-\infty}^\infty \mbP[\bar{R}_{m-1}(b) < z - Z_1 \mid Z_1 = t] \varphi(t) dt\\
        &= \int_{-\infty}^b F_{m-1}(b-t,z-t)\varphi(t) dt + \bm{1}_{\{z > b\}} \int_b^z \varphi(t) dt,\quad \forall z\in\mbR.
    \end{aligned}
    \end{equation}
    Take $z = b$, we obtain that
    \[
    F_m(b,b) = \int_{-\infty}^b F_{m-1}(b-t,b-t) \varphi(t) dt = \int_{-\infty}^b F_{Y_{m-1}}(b-t) \varphi(t) dt = F_{Y_m}(b),
    \]
    where the second equality follows from the induction hypothesis, and the last inequality follows from the recursion of $Y_m$ (see \eqref{eq-defnSkRmYm}). Hence \eqref{eq-Fmbb=FYmb} holds.

    Moreover, the integral recursion \eqref{eq-IntegralRecursion} and the initial case $F_1(b,z) = \Phi(z)$ imply by induction that $z\mapsto F_m(b,z)$ is continuous and strictly increasing for every $b > 0$. Therefore, taking $m = N-1$ and $b = \mfB_1(\alpha) = \VaR_\alpha[Y_m] > 0$, we obtain that $F_m(b,b) = F_{Y_m}(b) = \alpha$. Therefore, $\VaR_\alpha[R_m(b)] = b = (\mfB_1(\alpha))_+$. Combining with the case of $b \le 0$, this completes the proof.
\end{proof}

\subsubsection{Regime 2}
When $\mfp < 0$, $q_{N-1}$ in \eqref{eq-funqN-1} becomes a concave function over $n$, and the maximal value over $n$ is attained at 
\begin{equation}\label{eq-nN-1star}
    n_{N-1}^*(x,\theta) :=  \min\left\{ x, \max\left\{0, \frac{x}{2} + \frac{\sigma\sqrt{\tau}\mfB_{N-1}(\theta)}{2\mfp}\right\}\right\}.
\end{equation}
Actually, the optimal point is attained at $0$ if $\theta > \Phi(\frac{-\mfp x}{\sigma\sqrt{\tau}})$; at $x$ if $\theta < \Phi(\frac{\mfp x}{\sigma\sqrt{\tau}})$; and at $\frac{x}{2} + \frac{\sigma\sqrt{\tau}\Phi^{-1}(\theta)}{2\mfp}$ if $\theta$ is in between.
We have
\begin{align*}
    v_{N-1}(s,x,\theta) := \max_{n\in[0,x]} q_{N-1}(s,x,\theta,n)
    = C(s,x)  + \mfC_{N-1}(x,\theta),
\end{align*}
where $\mfC_{N-1}(x,\theta)$ is defined as
\begin{equation}\label{eq-mfCN-1}
\begin{aligned}
    \mfC_{N-1}(x,\theta) :=\begin{cases}
        \sigma x \sqrt{\tau} \Phi^{-1}(\theta), & \theta \in  (\Phi(\frac{-\mfp x}{\sigma\sqrt{\tau}}),1), \\
        \frac{1}{4}\left( x\sqrt{-\mfp} + \frac{\sigma\sqrt{\tau}\Phi^{-1}(\theta)}{\sqrt{-\mfp}} \right)^2,  & \theta \in [ \Phi(\frac{\mfp x}{\sigma\sqrt{\tau}}), \Phi(\frac{-\mfp x}{\sigma\sqrt{\tau}})],  \\
        0, & \theta \in [0,\Phi(\frac{\mfp x}{\sigma\sqrt{\tau}})).
    \end{cases} 
\end{aligned}
\end{equation}

Such a decomposition extends to all stages. Specifically, for $k=N-2,\cdots,1$, denote
\begin{equation}\label{eq-RecursionRegime2}
    \begin{aligned}
         \mfD_k(y,\theta) &:= \VaR_\theta[\sigma\sqrt{\tau} y Z_k + \mfC_{k+1}(y,U_{k+1})],\\
        q_k(s,x,\theta,n) &:= C(s,x) - \mfp n(x - n) + \mfD_k(x-n,\theta),\\
        \mfC_k(x,\theta) &:= \max_{n\in[0,x]} \{ -\mfp n(x-n) + \mfD_k(x-n,\theta) \}.
    \end{aligned}
\end{equation}
Then we have for $k=1,\cdots,N-1$,
\begin{equation}\label{eq-RecursionRegime2vk}
    v_k(s,x,\theta) := \max_{n\in[0,x]} q_k(s,x,\theta,n) = C(s,x) + \mfC_k(x,\theta).
\end{equation}

Now we consider recovering the optimal risk-level at stage $k+1$ given state $(s,x,\theta)$ at stage $k \in \{1,\cdots,N-2\}$. Following Corollary \ref{cor:MarkovHistoryPolicy}, we need to find $\theta_{k+1}$ such that
\begin{equation*}
    \theta_{k+1} = \min \left\{ \bar{\theta} \in [0,1] : \begin{gathered}
        \left(s - h\left(\frac{n_k^*}{\tau}\right)\right)n_k^* + v_{k+1}\left(s + \sigma\sqrt{\tau}z_k - \tau g\left(\frac{n_k^*}{\tau}\right), x - n_k^*, \bar{\theta} \right) \\
        \ge v_k(s,x,\theta)
    \end{gathered} \right\},
\end{equation*}
where $n_k^*$ is the optimal stage $k$ decision, i.e., $q_k(s,x,\theta,n_k^*) = v_k(s,x,\theta)$, and $z_k$ is a realization of $Z_k$. Modifying the terms, we obtain that
\begin{equation}\label{eq-RecursionRegime2thetak+1}
    \theta_{k+1} = \min\left\{ \bar{\theta} \in [0,1] 
    : \sigma\sqrt{\tau} z_k (x - n_k^*) +\mfC_{k+1}(x-n_k^*,\bar{\theta}) \ge \mfD_k(x - n_k^*,\theta)
    \right\}
\end{equation}
The value function for stage $k+1$ is then $v_{k+1}(s + \sigma\sqrt{\tau} z_k - \kappa n_k^*, x - n_k^*,\theta_{k+1})$.

\subsection{Comparison of Static and Dynamic Strategies}
In this subsection, we compare the optimal static and dynamic strategies derived in Sections \ref{sec:VaROEStatic} and \ref{sec:VaROEDynamic}. In Regime 1, both strategies admit analytical comparison. In Regime 2, only numerical solutions are available. Our results show that, in both cases, the dynamic strategy outperforms its static counterpart.

\subsubsection{Regime 1}

In this case, the difference between the optimal static and dynamic objectives in \eqref{eq-OEStaticObj} and \eqref{eq-VaROEDynamicObjective} is characterized by $\sigma\bar{x}\sqrt{T}\Delta_N(\alpha)$, where $\Delta_N(\alpha)$ is defined as
\[
    \Delta_N(\alpha) := \frac{J_{\mathrm{dyna}}^*(S_0,\bar{x},\alpha) - J_{\mathrm{stat}}^*(S_0,\bar{x},\alpha)}{\sigma\bar{x}\sqrt{T}} = \frac{(\mfB_1(\alpha))_+ - \sqrt{N-1}(\Phi^{-1}(\alpha))_+}{\sqrt{N}},
\]
where $\mfB_1(\alpha)$ also depends on $N$. We call $\Delta_N(\alpha)$ the normalized $\alpha$-VaR performance improvement.

When $N=2$, $\mfB_1(\alpha)=\Phi^{-1}(\alpha)$ for $\alpha \in (0,1)$, so the optimal static and dynamic strategies are equivalent. Starting from $N \ge 3$, the dynamic strategy differs from the static strategy, since the second-stage decision depends on the realization of $Z_1$. 
In fact, we can show that, when $\alpha$ is greater than some threshold $\alpha_0(N)$, the optimal dynamic strategy strictly outperforms its static counterpart. We call $\alpha_0(N)$ the critical risk level for the dynamic strategy.

\begin{prop}\label{prop:DynamicOutperformStatic}
    For every $N \ge 2$ and $\alpha \in (0,1)$, $\Delta_N(\alpha) \ge 0$. Moreover, $\Delta_2(\alpha) = 0$ while for $N \ge 3$, $\Delta_N(\alpha) > 0 \Leftrightarrow \alpha > \alpha_0(N)$ where $\alpha_0(N) := \frac{1}{4^{N-1}}\binom{2N-2}{N-1}$.
\end{prop}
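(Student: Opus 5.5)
The plan is to use the representation from Proposition~\ref{prop:DynamicThresholdStrategy}, namely $\mfB_1(\alpha)=\VaR_\alpha[M_{N-1}]$ with $M_{N-1}=\max_{1\le k\le N-1}S_k$. I also use the fact that $\sqrt{N-1}\,\Phi^{-1}(\alpha)=\VaR_\alpha[S_{N-1}]$, since $S_{N-1}\sim\mcN(0,N-1)$. Then $\sqrt N\,\Delta_N(\alpha)=(\VaR_\alpha[M_{N-1}])_+-(\VaR_\alpha[S_{N-1}])_+$, and everything reduces to comparing the quantiles of the running maximum of a Gaussian random walk with those of its endpoint.

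\textbf{Step 1 (nonnegativity and $N=2$).} Since $M_{N-1}\ge S_{N-1}$ pointwise, monotonicity of VaR gives $\VaR_\alpha[M_{N-1}]\ge\VaR_\alpha[S_{N-1}]$. The map $y\mapsto y_+$ is monotone, so $\Delta_N(\alpha)\ge0$. For $N=2$ we have $M_1=S_1$, hence $\Delta_2\equiv0$.

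\textbf{Step 2 (the case $\alpha\le 1/2$, $N\ge3$).} Here $(\Phi^{-1}(\alpha))_+=0$, so $\Delta_N(\alpha)>0$ if and only if $\VaR_\alpha[M_{N-1}]>0$. The law of $M_{N-1}$ has no atoms and its distribution function is continuous and strictly increasing; this follows from the positive Gaussian density, or from the integral recursion \eqref{eq-IntegralRecursion}. Hence $\VaR_\alpha[M_{N-1}]>0$ if and only if $\mbP[M_{N-1}<0]<\alpha$. The quantity $\mbP[M_{N-1}<0]=\mbP[S_1<0,\dots,S_{N-1}<0]$ is a persistence probability. For symmetric, continuously distributed i.i.d.\ increments, the Sparre Andersen theorem gives
\[
\mbP[S_1<0,\dots,S_n<0]=\binom{2n}{n}4^{-n}, \qquad n=N-1,
\]
which equals $\alpha_0(N)$. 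This is the step I expect to be the main obstacle if it must be self-contained. I would either cite Sparre Andersen (Feller, Vol.~II, Ch.~XII) or prove it via the combinatorial cyclic-permutation and exchangeability argument, using that ties have probability zero. This yields $\Delta_N(\alpha)>0\Leftrightarrow\alpha>\alpha_0(N)$ on $(0,1/2]$. Note that $\alpha_0(N)\le 1/2$, with strict inequality for $N\ge3$.

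\textbf{Step 3 (the case $\alpha>1/2$, $N\ge3$).} Here both positive parts are active, and $\alpha>1/2>\alpha_0(N)$, so I must show strict inequality $\VaR_\alpha[M_{N-1}]>\VaR_\alpha[S_{N-1}]=:z^*>0$. Since $\{M_{N-1}<z^*\}\subseteq\{S_{N-1}<z^*\}$, we get
\[
\mbP[M_{N-1}<z^*]=\alpha-\mbP[S_{N-1}<z^*\le M_{N-1}].
\]
The last event has positive probability: for example $S_1>z^*$ and $S_{N-1}-S_1<0$ is a Gaussian event of positive mass. Thus $\mbP[M_{N-1}<z^*]<\alpha$. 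By continuity of the distribution function of $M_{N-1}$, $\mbP[M_{N-1}<z^*+\delta]\le\alpha$ for some $\delta>0$, so $\VaR_\alpha[M_{N-1}]\ge z^*+\delta$.

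Steps 2 and 3 together give $\Delta_N(\alpha)>0\Leftrightarrow\alpha>\alpha_0(N)$ for $N\ge3$.
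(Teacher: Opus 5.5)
Your proposal follows the paper's proof essentially step for step: nonnegativity from $M_{N-1}\ge S_{N-1}$, the case $\alpha\le 1/2$ via $\mbP[M_{N-1}<0]$ and Sparre Andersen's theorem, and strictness for $\alpha>1/2$ via a positive-probability event on which $S_{N-1}<z^*\le M_{N-1}$.

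There is one concrete slip in Step 3. The event $\{S_1>z^*,\ S_{N-1}-S_1<0\}$ is not contained in $\{S_{N-1}<z^*\le M_{N-1}\}$. The condition $S_{N-1}<S_1$ does not force $S_{N-1}<z^*$; take, for instance, $S_1=z^*+2$ and $S_{N-1}=z^*+1$. So the positive mass of your event says nothing about the event you need.

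Replace it with $\{S_1\ge z^*,\ S_{N-1}<z^*\}$. This event has positive probability because $(S_1,S_{N-1})$ is a non-degenerate Gaussian vector when $N\ge 3$, and it is exactly the event the paper uses. Alternatively, using independent increments, take $\{z^*<S_1<z^*+1,\ S_{N-1}-S_1<-1\}$. Here $S_{N-1}-S_1\sim\mcN(0,N-2)$ is independent of $S_1$, so the probability factorizes into two positive terms.

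With that correction, the rest of Step 3 and the combination of the two cases go through as you wrote them.
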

\begin{proof}
    Since $N \ge 2$, following the definition of $\Delta_N$, we only need to compare $\mfB_1(\alpha)$ and $\sqrt{N-1}(\Phi^{-1}(\alpha))_+$.
    By Proposition \ref{prop:DynamicThresholdStrategy},
    $\mfB_1(\alpha)=\VaR_\alpha[M_{N-1}]$. Since $M_{N-1}\ge S_{N-1}$ almost surely
    and $S_{N-1}\sim\mcN(0,N-1)$, we have $\mfB_1(\alpha)\ge\VaR_\alpha[S_{N-1}]
    =\sqrt{N-1}\Phi^{-1}(\alpha)$ for all $N\ge 2$.

    Now suppose $N\ge3$. The vector $(S_1,S_{N-1})$ is a non-degenerate Gaussian vector with an everywhere positive density. Thus, for every $z\in\mbR$,
    \begin{align*}
        \mbP[S_{N-1} < z] - \mbP[M_{N-1} < z] = \mbP[S_{N-1} < z, M_{N-1} \ge z] \ge \mbP[S_{N-1} < z, S_1 \ge z] > 0.
    \end{align*}
    Since $M_{N-1}$ and $S_{N-1}$ have continuous, strictly increasing cumulative distribution functions, it follows that $\mfB_1(\alpha) > \sqrt{N-1}\Phi^{-1}(\alpha)$ for all $\alpha \in (0,1)$.
    Consequently, $\Delta_N(\alpha) > 0$ for $\alpha > 1/2$.

    For $\alpha \le 1/2$, we have $\Delta_N(\alpha) = (\mfB_1(\alpha))_+$, and by the definition of VaR, $\Delta_N(\alpha) > 0 \Leftrightarrow \alpha > \mbP[M_{N-1} < 0]$. To evaluate this probability, for $n\in\mbN_+$, let $A_n := \sum_{i=1}^n \bm{1}_{\{S_i > 0\}}$, i.e., the number of positive sums from $S_1$ to $S_n$. Since $Z_1,\cdots,Z_n$ are i.i.d. with a continuous symmetric distribution, from Sparre Andersen's theorem (Theorem 1 in \cite{Andersen1949saj}), we obtain that
    \[
    \mbP[M_n < 0] = \mbP[A_n = 0] = (-1)^n \binom{-\frac{1}{2}}{0}\binom{-\frac{1}{2}}{n} = \frac{(\frac{1}{2})(\frac{3}{2})\cdots(n-\frac{1}{2})}{n!} = \frac{1}{4^n}\binom{2n}{n},
    \]
    where the first equality uses the fact that $\mbP[S_i = 0] = 0$ for every $1\le i \le n$.
    Take $n = N-1$, we obtain $\alpha_0(N) := \mbP[M_{N-1} < 0] = \frac{1}{4^{N-1}}\binom{2N-2}{N-1}$. Clearly, $\alpha_0(2) = 1/2$ and $0 < \frac{\alpha_0(N+1)}{\alpha_0(N)} = \frac{2N-1}{2N} < 1$ for $N \ge 3$, so $\alpha_0(N)$ strictly decreases with $N$. We conclude that $\alpha_0(N) \le 1/2$ for all $N \ge 2$. Combining the two cases, we obtain the desired results. This completes the proof.
\end{proof}

From Section \ref{sec:VaROEDynamicRegime1}, the objective value under the optimal dynamic strategy can be recovered by applying the threshold stopping rule with stopping stage $1 + \min\{t_{\mfB_1(\alpha)},N-1\}$. So the problem remaining is how to compute $\mfB_1(\alpha)$ for any fixed $N$ and $\alpha$.

By Proposition \ref{prop:DynamicThresholdStrategy}, $\mfB_1(\alpha)=\VaR_\alpha[M_{N-1}]$, where
\[
    F_{N-1}(z):=\mbP[M_{N-1}<z] =\mbP[S_1<z,\cdots,S_{N-1}<z],\quad \forall z\in\mbR.
\]
The vector $(S_1,\cdots,S_{N-1})$ is centered Gaussian with covariance $\mathrm{Cov}(S_j,S_k)=\min\{j,k\}$. Thus, $\mfB_1(\alpha)$ can be computed by solving $F_{N-1}(z)=\alpha$ using numerical evaluation of the multivariate normal CDF and a root-finding method. However, evaluating this CDF becomes increasingly costly as $N$ grows.

For efficient computation, we exploit the identity $Y_m\overset{d}{=}M_m$ established in the proof of Proposition \ref{prop:DynamicThresholdStrategy}, where $Y_m$ is defined in \eqref{eq-defnSkRmYm}. Let $H_m(z):=\mbP[Y_m>z]$ for any $z\in\mbR$. Equation \eqref{eq-IntegralRecursion} yields
\[
    H_1(z)=\Phi(-z),\quad
    H_m(z)=\Phi(-z) + \int_0^\infty H_{m-1}(u)\varphi(z-u) du, \quad 2\le m \le N-1.
\]
We truncate the integral to $[0,B]$ and apply Gauss-Legendre quadrature with fixed nodes and weights \citep{Hildebrand1987book}. The recursion then reduces to repeated matrix-vector multiplication, with intermediate results reused across horizons and risk levels. Finally, we obtain $\mfB_1(\alpha)$ by solving $H_{N-1}(z)=1-\alpha$ using Brent's method. 

To control truncation error, observe that, for $z\ge0$, $1\le m\le N-2$,
\[
    H_m(z)\le\sum_{k=1}^m\mbP[S_k>z]
    =\sum_{k=1}^m\Phi\left(-\frac{z}{\sqrt{k}}\right)
    \le m\Phi\left(-\frac{z}{\sqrt m}\right).
\]
Consequently, for every $z\in\mbR$, $B > 0$, we have
\[
    0\le\int_B^\infty H_m(u)\varphi(z-u)du
    \le H_m(B) \le m\Phi\left(-\frac{B}{\sqrt m}\right).
\]
We set $B:=10\sqrt{N-1}$, so the omitted tail at step $m+1$ is bounded by $m\Phi(-10)$, where $\Phi(-10)\approx7.62\times10^{-24}$. This error is negligible for the horizons considered in the experiments. 

Figure \ref{fig:alpha0NDeltaNalpha} presents the numerical results. The left panel of Figure \ref{fig:alpha0NDeltaNalpha} shows that $\alpha_0(N)$ decreases from $0.5$ at $N=2$, first falls below $0.1$ at $N=33$, and reaches approximately $0.0566$ at $N=100$. 
The right panel of Figure \ref{fig:alpha0NDeltaNalpha} plots $\Delta_N(\alpha)$ for $\alpha=0.01,0.02,\cdots,0.99$ and $N\in\{3,5,10,33,100\}$. The results are computed using 1,000 quadrature nodes. As a numerical robustness check, we repeat the computation using 100 nodes. The maximum absolute difference between the two results is below $10^{-10}$ for $N\in\{3,5,10,33\}$ and below $1.6\times10^{-3}$ for $N=100$.
For each $N$, the gain is zero up to $\alpha_0(N)$, then increases to a maximum at $\alpha=0.5$ and decreases thereafter. We observe that larger horizons yield greater normalized gains, with diminishing increments. 

These results validate Propositions \ref{prop:DynamicThresholdStrategy} and \ref{prop:DynamicOutperformStatic}. In particular, $N=33$ already permits a strict dynamic advantage at $\alpha=0.1$, while increasing $N$ to $100$ raises the normalized gain by less than $0.07$. This illustrates diminishing gains from more frequent execution, which should be weighed against parameter restriction such as $N \le \frac{\kappa T}{2\eta}$, or any additional transaction costs or operational constraints in practice.

\begin{figure}[htbp]
    \centering
    \begin{minipage}{0.48\textwidth}
        \centering
        \includegraphics[width=\textwidth]{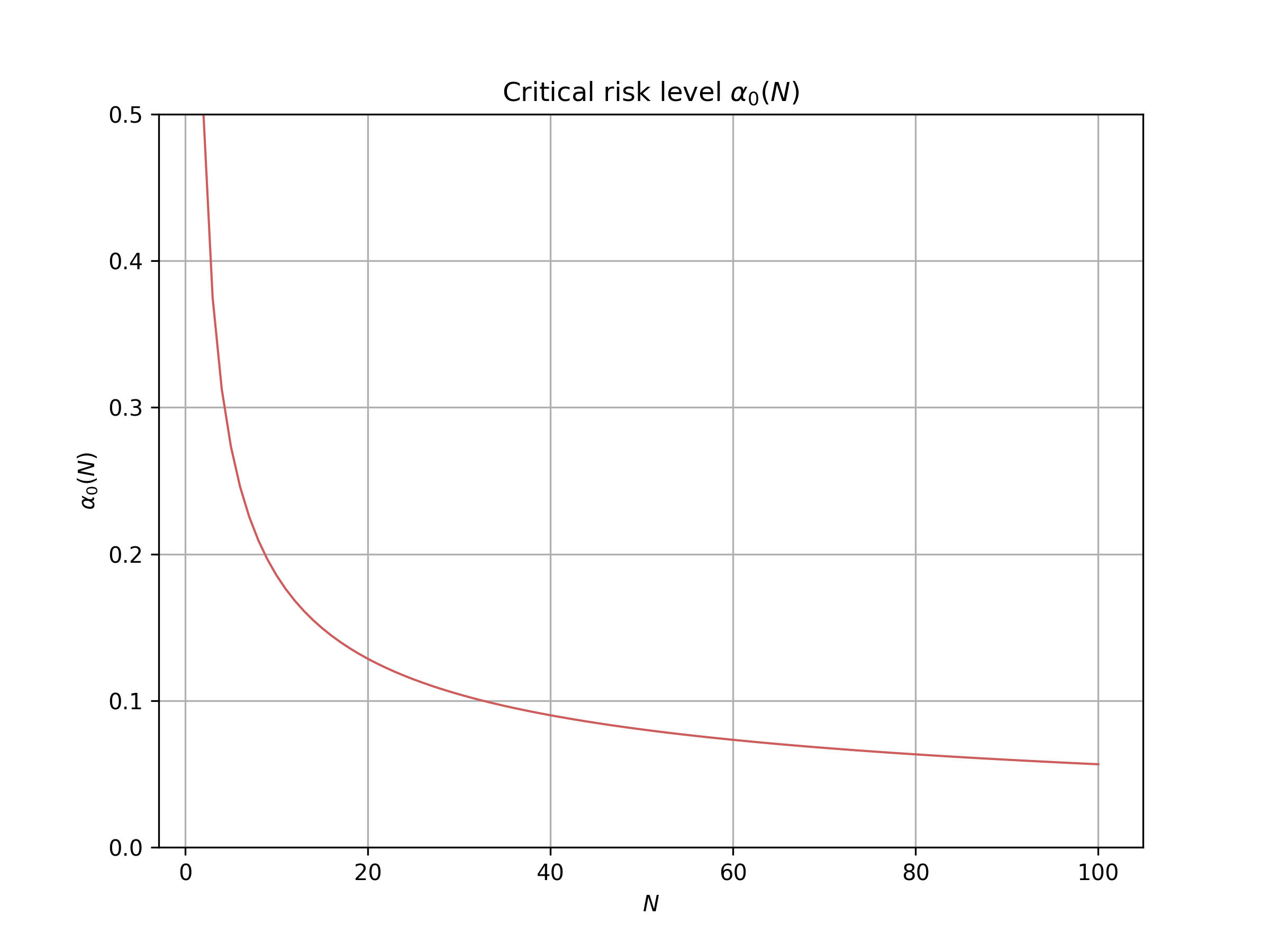}
    \end{minipage}
    \hfill
    \begin{minipage}{0.48\textwidth}
        \centering
        \includegraphics[width=\textwidth]{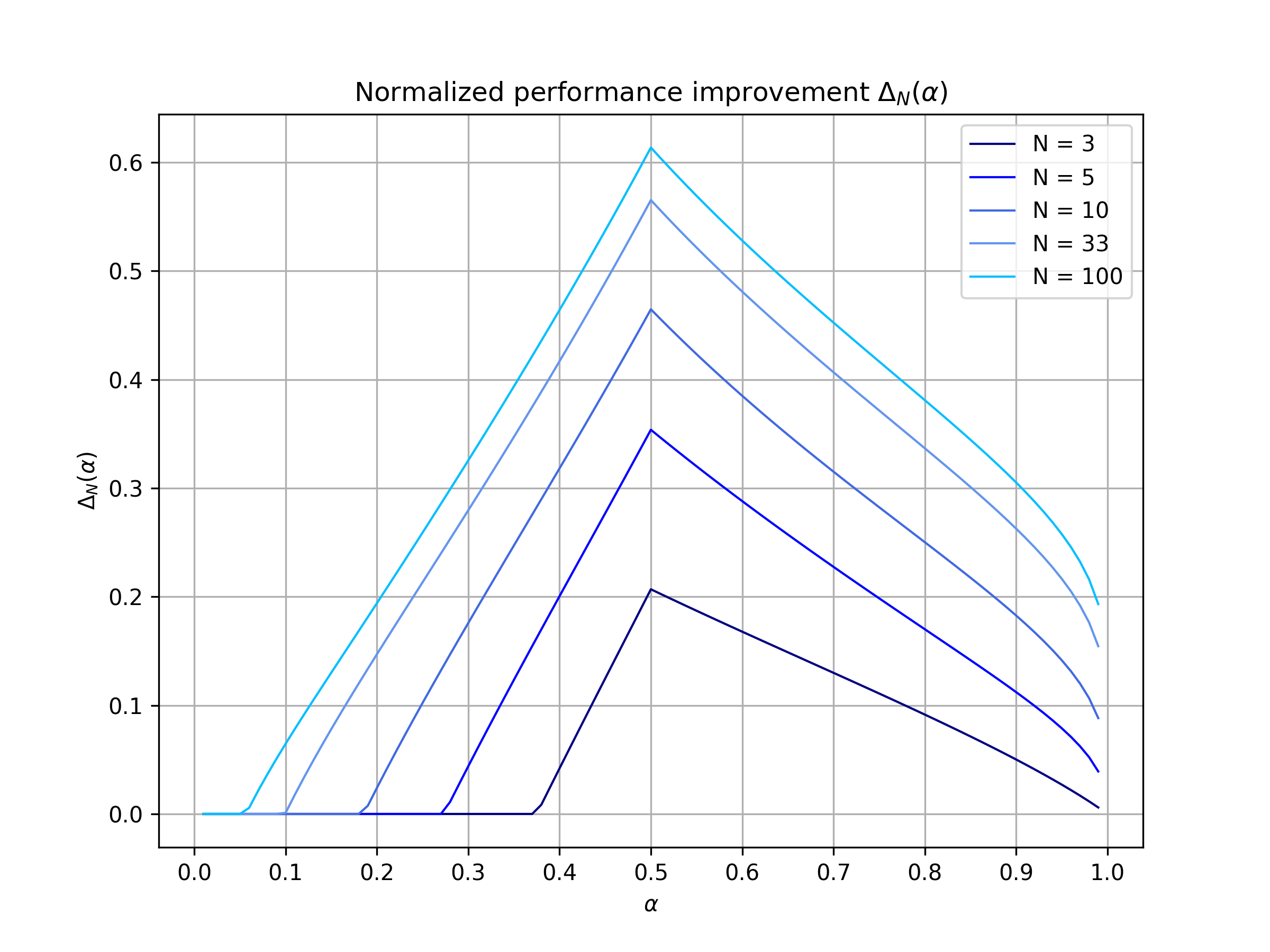}
    \end{minipage}
    \caption{Comparison of optimal dynamic and static strategies for Regime 1. Left: The change of the critical risk level $\alpha_0(N)$ with stage number $N$. Right: The change of the normalized $\alpha$-VaR performance improvement $\Delta_N(\alpha)$ with risk level $\alpha$ under different $N$.
    }
    \label{fig:alpha0NDeltaNalpha}
\end{figure}

\subsubsection{Regime 2}
We now test our result for Regime 2 numerically. We use the same transaction cost and market impact factors $\epsilon, \kappa, \eta$ as in Table 1 in \cite{Almgren2001jor}. Namely, we set 
\begin{equation}\label{eq-Regime2BaseParameters}
    S_0 = 50, \, \bar{x} = 10^6, \, T=5, \, \sigma = 0.95, \, \epsilon = 0.0625, \, \kappa = 2.5\times 10^{-7}, \, \eta = 2.5 \times 10^{-6}.
\end{equation}
In Regime 2, we require $N > \frac{\kappa T}{2\eta} = 0. 25$, so any $N \ge 2$ lies in this regime.

The main computational challenge in deriving the optimal dynamic strategy is evaluating $\mfD_k$, $\mfC_k$ and $\theta_{k+1}$ in \eqref{eq-RecursionRegime2}-\eqref{eq-RecursionRegime2thetak+1} for $k=1,\cdots,N-2$. To compute efficiently, we use backward induction on grids over inventory and risk levels. Namely, for each remaining inventory $y > 0$, we approximate
\[
    F_{k,y}(z) := \int_0^1 \Phi \left( \frac{z - \mfC_{k+1}(y,u)}{\sigma\sqrt{\tau}y} \right) du,\quad \forall z\in\mbR,
\]
using Gauss-Legendre quadrature and obtain $\mfD_k(y,\theta)$ by solving $F_{k,y}(z) = \theta$ with Brent's method \citep{Hildebrand1987book}. The quadrature nodes are included in the risk-level grid. We then maximize over feasible inventory-grid actions to compute $\mfC_k$, storing intermediate results for reuse. 

During the forward simulation, the risk level is updated by finding the smallest $\bar{\theta}$ satisfying $\mfC_{k+1}(y,\bar{\theta}) \ge \mfD_k(y,\theta) - \sigma\sqrt{\tau} y z_k$ using monotone interpolation between grid points. When the remaining inventory is zero, the continuation value is set to zero and no further risk-level update is required. We use Monte Carlo simulation to evaluate the resulting policies.

For the optimal static strategy, we maximize \eqref{eq-OEStaticStrategy} using SLSQP for each fixed risk level. As shown in Section \ref{sec:VaROEStatic}, in Regime 2, the objective is concave for $\alpha\le1/2$, and equal trading is optimal at $\alpha=1/2$. Hence, globally optimal solutions can be found. For $\alpha>1/2$, we retain the best solution across multiple initializations. Although global optimality is not guaranteed, different initializations yield the same objective value in our experiments.

\begin{figure}[htbp]
    \centering
    \includegraphics[width=\linewidth]{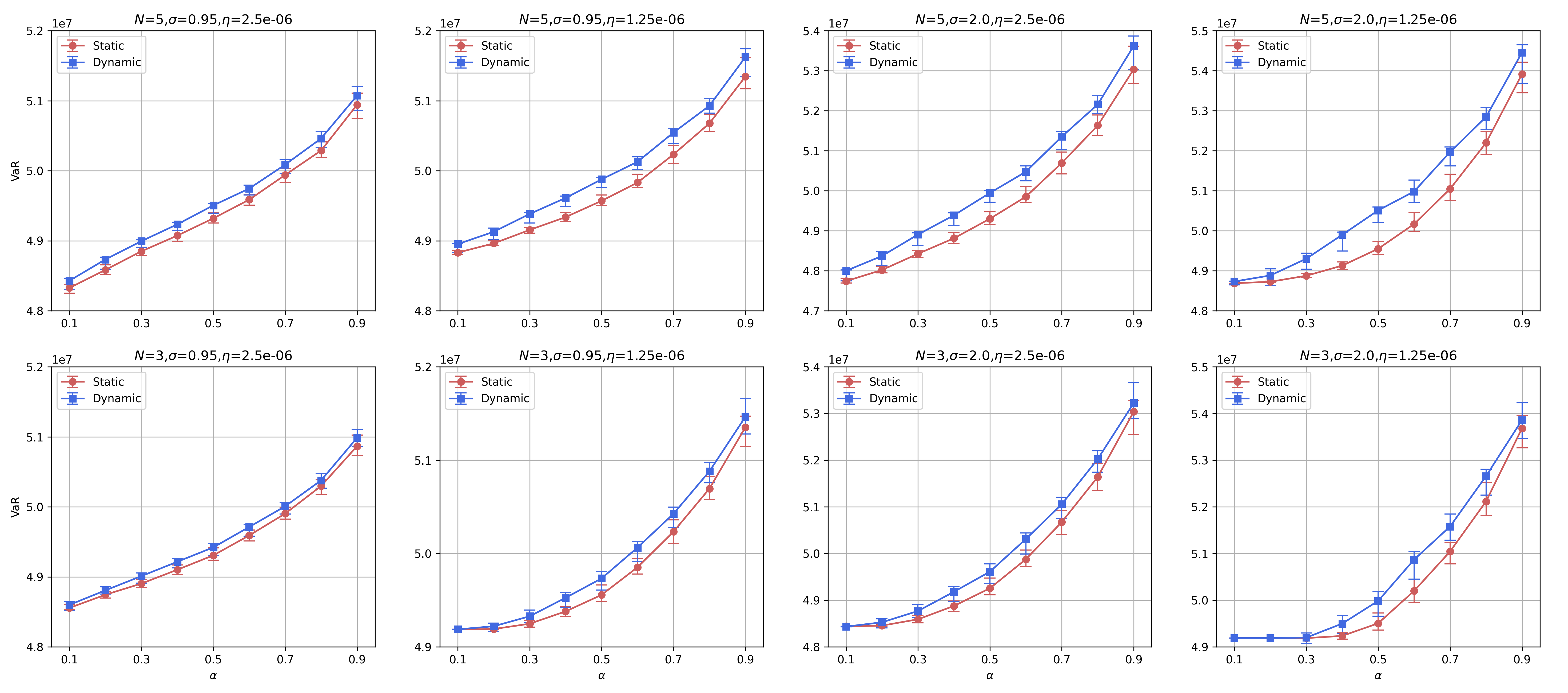}
    \caption{Comparison of static and dynamic strategies under different parameter settings for Regime 2}
    \label{fig:ComparisonRegime2M1000}
\end{figure}

We consider $\alpha\in\{0.1,\cdots,0.9\}$, $N\in\{3,5\}$. We refer to $N=5$ with \eqref{eq-Regime2BaseParameters} as the base case, which is the standard parameter setting in \cite{Almgren2001jor} Section 4. To facilitate parameter comparisons, we also consider $\sigma=2.0$ and $\eta=1.25\times10^{-6}$.
For each configuration, we simulate 1,000 paths using the optimal static and dynamic strategies under the common random numbers to estimate VaR and construct 95\% bootstrap confidence intervals. The grid discretization uses 1,000 inventory intervals and 100 Gauss–Legendre quadrature nodes, with the evaluated risk levels added to the risk grid. Although the analytical solution \eqref{eq-nN-1star}–\eqref{eq-mfCN-1} is available for stage $N-1$, we retain grid-based action optimization for consistency across stages. Finer grids can improve accuracy at greater computational cost. For $N=3$, comparing inventory discretizations with 100, 1,000, and 10,000 intervals yields a maximum difference below 2,000 in estimated VaR. 

Figure \ref{fig:ComparisonRegime2M1000} shows the empirical VaR of the optimal static and dynamic strategies under different parameter settings, evaluated using 1,000 simulated paths. The error bars over each point indicate 95\% bootstrap confidence intervals. 
We observe that the dynamic strategy outperforms its static counterpart across all parameter settings considered. The improvement is more pronounced for less extreme values of $\alpha$, particularly when $\alpha$ is close to $0.5$.
For small $\alpha$, increasing $N$ deteriorates the performance of both the static and dynamic strategies. This is because the execution price $\tilde{S}_k$ decreases as $N$ increases, which in turn reduces $C(S_0,\bar{x})$ and has a stronger adverse effect on revenue when $\alpha$ is small. When $\alpha$ is large, this effect is partially offset by the other terms in \eqref{eq-OEStaticObj} and \eqref{eq-RecursionRegime2vk}.
The maximum improvement of the dynamic strategy over the static strategy is $0.37\%$ in the base case. It decreases to $0.24\%$ when $N$ is changed to $3$, increases to $1.31\%$ when $\sigma$ is increased to $2.0$, and rises to $0.62\%$ when $\eta$ is changed to $1.25\times10^{-6}$. The largest improvement, $1.93\%$, occurs under the parameter setting $(N,\sigma,\eta)=(5,2.0,1.25\times10^{-6})$.

In summary, the numerical results demonstrate the computational feasibility and practical values of the VaR dynamic programming framework developed in Section \ref{sec:VaRDPFinite}. In particular, the proposed approaches provide tractable procedures for computing optimal dynamic strategies for the VaR optimal execution problem.

\section{Conclusions}\label{sec:Conclusion}
This work addresses VaR MDPs in continuous state and action spaces. We develop a VaR decomposition theorem and formulate a VaR dynamic programming framework with provable optimality for both finite- and infinite-horizon problems in general Borel spaces. Our framework is applied to an optimal execution problem, and numerical results validate our analysis while demonstrating the significant potential of the proposed theory.

Several research directions are worth exploring. First, efficient algorithms for solving VaR dynamic programming problems need to be developed. Second, for the infinite-horizon case, it is important to investigate conditions under which unbounded rewards can be accommodated. Finally, further applications should be explored in which VaR dynamic programming can be readily applied.

\section*{Acknowledgments}
Xian Chen is partially supported by the National Natural Science Foundation of China [Grant 12271454].
Erick Delage is partially supported by the Natural Sciences and Engineering Research Council of Canada [Grant RGPIN-2022-05261]. Weikai Wang is partially supported by IVADO and the Fonds de recherche du Québec [2007773]. 

\section*{Author Contributions}
Weikai Wang conceived, initiated, and coordinated the research project, contributed to the formulation of the theoretical results and the development, verification, and simplification of the proofs, designed and conducted the numerical experiments, and wrote the manuscript. Xian Chen developed the initial mathematical proofs and verified the theoretical results. Erick Delage contributed to the formulation of the theoretical results and supervised the research. All authors reviewed and revised the manuscript.

\appendix
\section*{Appendices}

\section{Auxiliary Lemmas}\label{sec:AuxiliaryLemmas}

\begin{lemma}\label{lemma:ConditionalVaRInequaltiy}
    Let $X$ and $Y$ be random variables on the probability space $(\Omega,\mcF,\mbP)$ taking values in $(\mbR,\mcB(\mbR))$ and $(E,\mcB(E))$, respectively. Let $\nu := \mbP\circ Y^{-1}$ denote the law of $Y$. Then, for every $\alpha \in [0,1]$,
    \[ \VaR_\alpha[X] \le \sup_{y \in \supp(\nu)} \VaR_\alpha[X|Y = y]. \]
\end{lemma}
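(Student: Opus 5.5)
The idea is to apply Theorem \ref{thm:VaRDecomposition} with the particular choice $\zeta \equiv \alpha$, which is feasible, and then bound the resulting infimum by a supremum. The main difficulty is a measure-zero one: the supremum must be taken only over $\supp(\nu)$, and that is exactly why the decomposition theorem lets us restrict to $\mcY=\supp(\nu)$.

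\textbf{Trivial cases.} If $\alpha=1$, the right-hand side equals $+\infty$ by convention, so there is nothing to prove. Since $\supp(\nu)$ is nonempty for a probability measure on a separable metric space, the supremum is over a nonempty set. From now on take $\alpha\in[0,1)$.

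\textbf{Main argument.} Work directly with the sets from the proof of Theorem \ref{thm:VaRDecomposition}. Set $f_z(y):=\mu(\{X<z\},y)$ and let $s:=\sup_{y\in\supp(\nu)}\VaR_\alpha[X\mid Y=y]$. If $s=+\infty$ we are done, so assume $s<+\infty$.

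Take any $z>s$, with $z\in\mbR$. For every $y\in\supp(\nu)$ we have $z>\VaR_\alpha[X\mid Y=y]$, and the definition \eqref{eq-ConditionalVaR} of conditional VaR then gives $f_z(y)>\alpha$. Because $\nu(\supp(\nu))=1$ (Proposition 1.45 in \cite{Follmer2016book}, using separability), integrating yields
\[
\mbP[X<z]=\int_{\supp(\nu)} f_z\,d\nu .
\]
The strict pointwise inequality $f_z>\alpha$ on a set of full measure then gives $\mbP[X<z]>\alpha$; it is strict because $\int (f_z-\alpha)\,d\nu>0$ whenever the integrand is strictly positive on a set of full measure.

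Hence no $z>s$ belongs to the set $\{z:\mbP[X<z]\le\alpha\}$. Since $z\mapsto\mbP[X<z]$ is non-decreasing, this set is downward closed, so its supremum satisfies $\VaR_\alpha[X]\le s$.

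\textbf{Edge case $s=-\infty$.} The same argument works for every real $z$, giving $\mbP[X<z]>\alpha$ for all $z$. The set $\{z:\mbP[X<z]\le\alpha\}$ is then empty, so $\VaR_\alpha[X]=-\infty$ by the convention $\sup\varnothing=-\infty$, and the inequality holds.
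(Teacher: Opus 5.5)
Your main argument is correct and is essentially the paper's proof in contrapositive form. The paper fixes $b_\epsilon=\VaR_\alpha[X]-\epsilon$ and uses $\int_E \mu(\{X<b_\epsilon\},y)\,\nu(dy)\le\alpha$ to find some $y_\epsilon\in\supp(\nu)$ with $\mu(\{X<b_\epsilon\},y_\epsilon)\le\alpha$; you instead show that $f_z>\alpha$ on all of $\supp(\nu)$ forces $\mbP[X<z]>\alpha$, which makes explicit the strictness step the paper leaves implicit. You should drop your opening paragraph, though: Theorem~\ref{thm:VaRDecomposition} with $\zeta\equiv\alpha$ only gives the reverse bound $\VaR_\alpha[X]\ge\inf_{y}\VaR_\alpha[X\mid Y=y]$, and your actual argument never uses it.
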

\begin{proof}
    The claim trivially holds for $\alpha = 1$.
    For $\alpha \in [0,1)$, it suffices to show that 
    \[ b_\epsilon := \VaR_\alpha[X] - \epsilon \le \sup_{ y \in \supp(\nu)} \VaR_\alpha[X|Y =y],\quad \forall \epsilon > 0.\] 
    To see this, by the definition of VaR, we have $\mbP[X < b_\epsilon] \le \alpha$, and hence $\int_E \mu(\{X < b_\epsilon\},y) \nu(dy) \le \alpha$. This means that there exists some $y_\epsilon \in \supp(\nu)$ such that $\mu(\{X < b_\epsilon\},y_\epsilon) \le \alpha$. By \eqref{eq-ConditionalVaR}, this implies that $\VaR_\alpha[X| Y= y_\epsilon] \ge b_\epsilon$. This completes the proof.
\end{proof}

\begin{lemma}\label{lemma:UscNondecreasingisRightcontinuous}
    Let $g:\hat{\bfX}\to\bar{\mbR}$ be u.s.c. and $\alpha \mapsto g(x,\alpha)$ is non-decreasing for each $x\in\bfX$. Then $\alpha\mapsto g(x,\alpha)$ is right-continuous. 
\end{lemma}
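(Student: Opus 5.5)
Fix $x\in\bfX$ and $\alpha_0\in[0,1)$; the goal is $\lim_{\alpha\downarrow\alpha_0} g(x,\alpha) = g(x,\alpha_0)$. (At $\alpha_0=1$ there is nothing to prove, since no $\alpha>1$ lies in $[0,1]$.) The argument combines two inequalities, one from monotonicity and one from upper semicontinuity.

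First, since $\alpha\mapsto g(x,\alpha)$ is non-decreasing, the right limit $g(x,\alpha_0+) := \lim_{\alpha\downarrow\alpha_0} g(x,\alpha) = \inf_{\alpha>\alpha_0} g(x,\alpha)$ exists in $\bar{\mbR}$. Monotonicity also gives $g(x,\alpha)\ge g(x,\alpha_0)$ for every $\alpha>\alpha_0$, so
\[
g(x,\alpha_0+) \ge g(x,\alpha_0).
\]

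Second, for the reverse inequality, take any sequence $\alpha_n\downarrow\alpha_0$. Then $(x,\alpha_n)\to(x,\alpha_0)$ in $\hat{\bfX}$ with the first coordinate held fixed. Upper semicontinuity of $g$ on $\hat{\bfX}$, restricted to this sequence, gives
\[
g(x,\alpha_0+) = \lim_{n\to\infty} g(x,\alpha_n) = \limsup_{n\to\infty} g(x,\alpha_n) \le g(x,\alpha_0).
\]
Combining the two inequalities yields $g(x,\alpha_0+)=g(x,\alpha_0)$, which is right-continuity at $\alpha_0$.

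There is no real obstacle here. The only point needing care is that values may equal $\pm\infty$. Both the monotone limit and the u.s.c. inequality are understood in $\bar{\mbR}$, and the argument uses only order relations, so it goes through unchanged. For instance, if $g(x,\alpha_0)=-\infty$, upper semicontinuity forces $g(x,\alpha_n)\to-\infty$.
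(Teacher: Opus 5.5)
Your proof is correct and follows essentially the same route as the paper: monotonicity gives the lower bound $g(x,\alpha_n)\ge g(x,\alpha_0)$ along $\alpha_n\downarrow\alpha_0$, and upper semicontinuity of $g$ along $(x,\alpha_n)\to(x,\alpha_0)$ gives $\limsup_{n\to\infty} g(x,\alpha_n)\le g(x,\alpha_0)$. The only cosmetic difference is that you invoke the existence of the monotone right limit, whereas the paper compares $\liminf$ and $\limsup$ directly.
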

\begin{proof}
    For any $(x,\alpha) \in \bfX\times[0,1)$, taking $\{\alpha_n\}_{n\in\mbN}\subset [0,1]$ such that $\alpha_n \downarrow \alpha$, by the monotonicity of $\alpha \mapsto g(x,\alpha)$, we have $g(x,\alpha_n) \ge g(x,\alpha)$. Thus $\liminf_{n\to\infty} g(x,\alpha_n) \ge g(x,\alpha)$. Moreover, since $g$ is u.s.c. on $\hat{\bfX}$ and $(x,\alpha_n) \to (x,\alpha)$, we have $\limsup_{n\to\infty} g(x,\alpha_n) \le g(x,\alpha)$. This implies the limit $\lim_{n\to\infty} g(x,\alpha_n)$ exists and equals $g(x,\alpha)$. Hence $\alpha\mapsto g(x,\alpha)$ is right-continuous.
\end{proof}

\begin{lemma}\label{lemma:NewPolicyMeasurable}
    Suppose Assumption \ref{assump:CompactContinuity} holds and fix $T \in \mbN_+$. For $t\in[T-1]$, let $\hat{f}_t \in \hat{\mbF}$ and $v_t$ defined as in \eqref{eq-Bellman-FH}. For $t \in [T-2]$, define
    \begin{equation}\label{eq-alphatSet}
        \mfa_{t+1}(x,\alpha,x') := \min\left\{ \bar{\alpha} \in [0,1]: \gamma v_{t+1}(x',\bar{\alpha}) \ge v_t(x,\alpha) - r_t(x,\hat{f}_t(x,\alpha)) \right\},
    \end{equation}
    for $\alpha \in [0,1)$ and set $\mfa_{t+1}(x,1,x') := 1$. Assume that the set in \eqref{eq-alphatSet} is nonempty. Then
    \begin{enumerate}[label = (\roman*)]
        \item $\mfa_{t+1}: \bfX\times[0,1]\times\bfX \to [0,1]$ is measurable.
        \item For every initial times $s\in [T-1]$ and initial risk level $\alpha \in [0,1]$, and $h_{s:k} \in \bfH_{s:k}$ for $k \ge s$,
        the recursively defined risk-level functions: 
        \begin{equation}\label{eq-rhoskalphaMeasurable}
        \begin{aligned}
            \rho_{s:s}^\alpha(h_{s:s}) &:= \alpha,\\
            \rho_{s:k+1}^\alpha(h_{s:k+1}) &:= \mfa_{k+1}(x_k,\rho_{s:k}^\alpha(h_{s:k}),x_{k+1}),\quad k=s,\cdots,T-2,
        \end{aligned}
        \end{equation}
        are measurable.
        \item The stochastic kernels
        \[
        \varpi_{s:k}^\alpha(\cdot|h_{s:k}) := \delta_{\hat{f}_k(x_k,\rho_{s:k}^\alpha(h_{s:k}))}(\cdot),\quad k=s,\cdots,T-1,
        \]
        define an admissible history-dependent policy
        \[
        \pi_{s:T-1}^\alpha := (\varpi_{s:s}^\alpha,\cdots,\varpi_{s:T-1}^\alpha) \in \Pi_{s:T-1}.
        \]
    \end{enumerate}
\end{lemma}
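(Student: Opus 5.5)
We must prove Lemma \ref{lemma:NewPolicyMeasurable}: measurability of $\mfa_{t+1}$, then of the $\rho$'s, then admissibility of the policy. The plan is to express $\mfa_{t+1}$ through countably many measurable sets, using the monotonicity in $\alpha$ and the upper semicontinuity of $v_{t+1}$. By Lemma \ref{lemma:vtvtuProperties}, $v_t, v_{t+1} \in \mcU_u(\hat{\bfX})$, so they are Borel on $\hat{\bfX}$ by Lemma \ref{lemma:QuantileTransform}(i). Since $\hat{f}_t$ is measurable and $r_t$ is u.s.c. (hence Borel) on $\bfK$, the threshold function
\[
c(x,\alpha) := \gamma^{-1}\bigl(v_t(x,\alpha) - r_t(x,\hat{f}_t(x,\alpha))\bigr)
\]
is measurable into $\bar{\mbR}$ on $\bfX\times[0,1)$.

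\textbf{Step (i).} For $\alpha<1$, the set $D(x,\alpha,x') = \{\bar\alpha : v_{t+1}(x',\bar\alpha)\ge c(x,\alpha)\}$ is an up-set in $\bar\alpha$ because of (A2). It is closed because right-continuity together with monotonicity gives closedness from the left endpoint. It is nonempty by assumption, so it has the form $[\mfa_{t+1},1]$. Hence, for each $\beta\in[0,1]$,
\[
\{\mfa_{t+1} \le \beta\} = \{(x,\alpha,x') : v_{t+1}(x',\beta) \ge c(x,\alpha)\}.
\]
This is measurable, being the comparison of two measurable functions of $(x,\alpha,x')$; the extended values are handled by the conventions. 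On $\alpha=1$ the function is constant, and $\{\alpha=1\}$ is Borel. Sublevel sets at all $\beta$ (in particular rationals) therefore suffice for measurability of $\mfa_{t+1}$. I expect this to be the main obstacle: one must carefully justify that the minimum is attained and that the sublevel-set identity holds, which is exactly where right-continuity is used.

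\textbf{Step (ii).} I argue by induction on $k$. The map $\rho_{s:s}^\alpha$ is constant. If $\rho_{s:k}^\alpha$ is measurable on $\bfH_{s:k}$, then $h_{s:k+1}\mapsto (x_k,\rho_{s:k}^\alpha(h_{s:k}),x_{k+1})$ is measurable into $\bfX\times[0,1]\times\bfX$, since each coordinate is a projection or is measurable. Composing with $\mfa_{k+1}$ from (i) then gives measurability of $\rho_{s:k+1}^\alpha$. If joint measurability in $\alpha$ is desired, the same argument applies verbatim with $\alpha$ as an additional argument.

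\textbf{Step (iii).} The map $h_{s:k}\mapsto \hat{f}_k(x_k,\rho_{s:k}^\alpha(h_{s:k}))$ is a composition of measurable maps, and it lies in $\bfA(x_k)$ because $\hat{f}_k\in\hat{\mbF}$. For any $C\in\mcB(\bfA)$, the map $h\mapsto\delta_{\hat{f}_k(\cdots)}(C)$ equals the indicator of the preimage of $C$, so it is measurable. Thus each $\varpi_{s:k}^\alpha$ is a stochastic kernel concentrated on $\bfA(x_k)$, and $\pi_{s:T-1}^\alpha$ is a deterministic admissible policy in $\Pi_{s:T-1}$.
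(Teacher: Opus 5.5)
Your proposal is correct and follows essentially the same route as the paper: the feasible set is a closed up-set in $\bar\alpha$, which yields the sublevel-set identity $\{\mfa_{t+1}\le\beta\}=\{\gamma v_{t+1}(x',\beta)\ge v_t(x,\alpha)-r_t(x,\hat{f}_t(x,\alpha))\}$, a Borel set. The remaining parts go by induction and composition for the $\rho$'s and the Dirac kernels. The only cosmetic difference is that you take nonemptiness from the hypothesis, whereas the paper notes it holds automatically because $v_{t+1}(x',1)=+\infty$.
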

\begin{proof}
    (i) Fix $t\in[T-2]$. Since $\alpha \mapsto v_{t+1}(x',\alpha)$ is non-decreasing and u.s.c., the set
    \[
    D_{t+1}(x,\alpha,x') := \left\{ \bar{\alpha} \in [0,1]: \gamma v_{t+1}(x',\bar{\alpha}) \ge v_t(x,\alpha) - r_t(x,\hat{f}_t(x,\alpha))\right\},
    \]
    is a closed upper subset of $[0,1]$. By definition it is nonempty (since $\bar{\alpha}=1$ is always a solution), and hence compact, so its minimum is attained.

    For every $\beta \in [0,1)$, by definition, we have
    \begin{align*}
        &\{(x,\alpha,x'): \mfa_{t+1}(x,\alpha,x') \le \beta\} \\
        =& \{(x,\alpha,x'): \alpha \in [0,1), \gamma v_{t+1}(x',\beta) \ge v_t(x,\alpha) - r_t(x,\hat{f}_t(x,\alpha))\}.
    \end{align*}
    The set of the RHS is Borel since by Lemma \ref{lemma:vtvtuProperties} and Assumption \ref{assump:CompactContinuity}, $v_t, v_{t+1}, r_t$ and $\hat{f}_t$ are all measurable. Therefore, the function $\mfa_{t+1}$ is also measurable.

    (ii) We prove by induction. The initial function $\rho_{s:s}^\alpha\equiv\alpha$ is a constant and hence measurable. Suppose $\rho_{s:k}^\alpha$ is measurable for some $s < k \le T-2$. Since the coordinate mappings 
    \[
    h_{s:k+1}=(x_s,a_s,\cdots,x_k,a_k, x_{k+1}) \mapsto x_k,\quad h_{s:k+1} \to x_{k+1},
    \]
    are measurable, by (i), the mapping
    \[
    h_{s:k+1} \mapsto \mfa_{k+1}(x_k,\rho_{s:k}^\alpha(h_{s:k}),x_{k+1}),
    \]
    is also measurable. Hence, $\rho_{s:k+1}^\alpha$ is measurable. We conclude that the functions in \eqref{eq-rhoskalphaMeasurable} are measurable.

    (iii) For $h_{s:k} \in \bfH_{s:k}$, define $a_k^\alpha(h_{s:k}) := \hat{f}_k(x_k,\rho_{s:k}^\alpha(h_{s:k}))$. Clearly, $a_k^\alpha(h_{s:k}) \in \bfA(x_k)$. Then $\varpi_{s:k}^\alpha(\cdot|h_{s:k}) = \delta_{a_k^\alpha(h_{s:k})}(\cdot)$. Following (ii), and noting that $\hat{f}_k$ is measurable, we know that $h_{s:k} \mapsto \varpi_{s:k}^\alpha(\cdot|h_{s:k})$ is also measurable.
    Hence, the stochastic kernel $\varpi_{s:k}^\alpha$ is well-defined. Since $\varpi_{s:k}^\alpha$ assigns probability one to the feasible action set $\bfA(x_k)$, we conclude that $\pi_{s:T-1}^\alpha \in \Pi_{s:T-1}$.
\end{proof}

{\begin{small}
    \bibliography{reference.bib}

@inproceedings{Hau2023neurips,
  title={On dynamic programming decompositions of static risk measures in {Markov} decision processes},
  author={Hau, Jia Lin and Delage, Erick and Ghavamzadeh, Mohammad and Petrik, Marek},
  booktitle={Advances in Neural Information Processing Systems},
  volume={36},
  year={2023}
}

@article{LiXC2022opre,
  title={Quantile {Markov} decision processes},
  author={Li, Xiaocheng and Zhong, Huaiyang and Brandeau, Margaret L},
  journal={Operations Research},
  volume={70},
  number={3},
  pages={1428--1447},
  year={2022}
}

@inproceedings{Hau2025aistats,
    author = {J. L. Hau and E. Delage and E. Derman and M. Ghavamzadeh and M. Petrik},
    title = {Q-learning for Quantile {MDPs}: A Decomposition, Performance, and Convergence Analysis},
    booktitle = {The 28th International Conference on Artificial Intelligence and Statistics (AISTATS)},
    year = {2025}
}

@book{Follmer2016book,
  title={Stochastic Finance: An Introduction in Discrete Time, 4th Edition},
  author={F{\"o}llmer, Hans and Schied, Alexander},
  year={2016},
  publisher={Walter de Gruyter}
}

@book{Hernandez1996book,
  title={Discrete-time Markov Control Processes: Basic Optimality Criteria},
  author={Hern{\'a}ndez-Lerma, On{\'e}simo and Lasserre, Jean B},
  year={1996},
  publisher={Springer Science \& Business Media},
  address = {New York}
}

@book{Bauerle2011book,
  title={Markov Decision Processes with Applications to Finance},
  author={Nicole B\"{a}uerle and Ulrich Rieder},
  year={2011},
  publisher={Springer Berlin, Heidelberg}
}

@article{Bauerle2011mmor,
  title={Markov decision processes with average-value-at-risk criteria},
  author={Nicole B\"{a}uerle and Jonathan Ott},
  journal={Mathematical Methods of Operations Research},
  volume={74},
  pages={361--379},
  year={2011}
}

@article{Almgren2001jor,
  title={Optimal execution of portfolio transactions},
  author={Almgren, Robert and Chriss, Neil},
  journal={Journal of Risk},
  volume={3},
  pages={5--40},
  year={2001}
}

@article{Bauerle2024mmor,
  title={Markov decision processes with risk-sensitive criteria: an overview},
  author={Nicole B\"{a}uerle and Anna Jaśkiewicz},
  journal={Mathematical Methods of Operations Research},
  volume={99},
  number={1},
  pages={141--178},
  year={2024}
}

@article{Ruszczynski2010mp,
    author = {Andrzej Ruszczyński},
    title = {Risk-averse dynamic programming for {Markov} decision processes},
    journal = {Mathematical Programming},
    year = {2010},
    volume = {125},
    pages = {235--261}
}

@article{Jiang2018mor,
    author = {Daniel R. Jiang and Warren B. Powell},
    title = {Risk-averse approximate dynamic programming with quantile-based risk measures},
    journal = {Mathematics of Operations Research},
    year = {2018},
    volume = {43},
    number = {2},
    pages = {554--579}
}

@article{Shen2013sicon,
    author = {Yun Shen and Wilhelm Stannat and Klaus Obermayer},
    title = {Risk-sensitive {Markov} control processes},
    journal = {SIAM Journal on Control and Optimization},
    year = {2013},
    volume = {51},
    number = {5},
    pages = {3652–-3672}
}

@article{Cavus2014sicon,
    author = {\c{C}avu\c{s}, {\"O}zlem and Ruszczy\'{n}ski, Andrzej},
    title = {Risk-Averse Control of Undiscounted Transient Markov Models},
    journal = {SIAM Journal on Control and Optimization},
    year = {2014},
    volume = {52},
    number = {6},
    pages = {3935–-3966}
}

@article{Filar1995tac,
    author = {J. A. Filar and D. Krass and K. W. Ross},
    title = {Percentile Performance Criteria For Limiting Average {Markov} Decision Processes},
    journal = {IEEE Transactions on Automatic Control},
    year = {1995},
    volume = {40},
    number = {1},
    pages = {2--10}
}

@article{Pflug2016mor,
    author = {Georg Ch. Pflug and Alois Pichler},
    title = {Time-Consistent Decisions and Temporal Decomposition of Coherent Risk Functionals},
    journal = {Mathematics of Operations Research},
    year = {2016},
    volume = {41},
    number = {2},
    pages = {682--699}
}

@article{Bauerle2026ejor,
    author = {Nicole B\"{a}uerle and Tamara G\"{o}ll and Anna Ja\'{s}kiewicz},
    title = {Yet Another Distributional {Bellman} Equation},
    journal = {European Journal of Operational Research},
    year = {2026},
    volume = {334},
    number = {2},
    pages = {392--403}
}

@article{Howard1972mnsc,
  title={Risk-sensitive {Markov} decision processes},
  author={Howard, R. A. and Matheson, J. E.},
  journal={Management Science},
  volume={18},
  number={7},
  pages={356--369},
  year={1972}
}

@article{Bauerle2022ejor,
    author = {Nicole B\"{a}uerle and Alexander Glauner},
    title = {Markov decision processes with recursive risk measures},
    journal = {European Journal of Operational Research},
    year = {2022},
    volume = {296},
    pages = {953--966}
}

@article{JinYS2017qf,
    author = {Yasong Jin},
    title = {Optimal execution strategy and liquidity adjusted value-at-risk},
    journal = {Quantitative Finance},
    year = {2017},
    volume = {17},
    number = {8},
    pages = {1147--1157}
}

@techreport{BCBS2016,
  author       = {{Basel Committee on Banking Supervision (BCBS)}},
  title        = {Minimum Capital Requirements for Market Risk—January 2016},
  institution  = {Basel Committee on Banking Supervision, Bank for International Settlements},
  type         = {Standards},
  location     = {Basel, Switzerland},
  year         = {2016}
}

@article{Emmer2015jor,
    author = {Susanne Emmer and M. Kratz and Dirk Tasche},
    title = {What is the Best Risk Measure in Practice? A Comparison of Standard Measures},
    journal = {Journal of Risk},
    year = {2015},
    volume = {18},
    number = {2},
    pages = {31--60}
}

@article{deCastro2019econometrica,
    author = {de Castro, Luciano and Antonio F. Galvao},
    title = {Dynamic Quantile Models of Rational Behavior},
    journal = {Econometrica},
    year = {2019},
    volume = {87},
    number = {6},
    pages = {1893--1939}
}

@article{deCastro2022geb,
    author = {de Castro, Luciano and Antonio F. Galvao and Charles N. Noussair and Liang Qiao},
    title = {Do people maximize quantiles?},
    journal = {Games and Economic Behavior},
    year = {2022},
    volume = {132},
    pages = {22--40}
}

@article{HeXD2025mnsc,
    author = {Xue Dong He and Zhaoli Jiang and Steven Kou},
    title = {Dynamic Portfolio Selection Under Quantile Maximization},
    journal = {Management Science},
    year = {2025},
    note = {Article in Advance}
}

@article{Artzner2001mf,
    author = {Philippe Artzner and Freddy Delbaen and Jean-Marc Eber and David Heath},
    title = {Coherent measures of risk},
    journal = {Mathematical Finance},
    year = {2001},
    volume = {9},
    number = {3},
    pages = {203--228}
}

@article{Xia2025arXiv,
    author = {Xia, Li and Pan, Jinyan},
    title = {Markov decision processes with value-at-risk criterion},
    journal = {arXiv preprint, arXiv:2507.22355},
    year = {2025}
}

@inproceedings{LuoY2026icml,
    author = {Yu Luo and Erick Delage},
    title = {Boosting {CVaR} Policy Optimization with Quantile Gradients},
    booktitle = {Proceedings of the 43rd International Conference on Machine Learning},
    year = {2026}
}

@article{Bauerle2014mor,
    author = {Nicole B\"{a}uerle and Ulrich Rieder},
    title = {More risk-sensitive {Markov} decision processes},
    journal = {Mathematics of Operations Research},
    year = {2014},
    volume = {39},
    number = {1},
    pages = {105--120}
}

@inproceedings{Hau2023aistats,
    author = {J. L. Hau and M. Petrik and M. Ghavamzadeh},
    title = {Entropic risk optimization in discounted {MDPs}},
    booktitle = {Proceedings of the 26th International Conference on Artificial Intelligence and Statistics (AISTATS)},
    year = {2023}
}

@article{Biswas2023arc,
    author = {A. Biswas and V. S. Borkar},
    title = {Ergodic risk-sensitive control -- A survey},
    journal = {Annual Reviews in Control},
    year = {2023},
    volume = {55},
    pages = {118--141}
}

@article{Bauerle2021mmor,
    author = {Nicole B\"{a}uerle and Alexander Glauner},
    title = {Minimizing spectral risk measures applied to {Markov} decision processes},
    journal = {Mathematical Methods of Operations Research},
    year = {2021},
    volume = {94},
    pages = {35--69}
}

@article{Rudloff2014ejor,
    author = {Birgit Rudloff and Alexandre Street and Davi M. Valladão},
    title = {Time consistency and risk averse dynamic decision models: Definition, interpretation and practical consequences},
    journal = {European Journal of Operational Research},
    year = {2014},
    volume = {234},
    pages = {743--750}
}

@article{Kreps1977mor,
    author = {D. M. Kreps},
    title = {Decision problems with expected utility criteria, {I}: Upper and lower convergent utility},
    journal = {Mathematics of Operations Research},
    year = {1977},
    volume = {2},
    number = {1},
    pages = {45--53}
}

@book{Billingsley1999book,
    author = {Patrick Billingsley},
    title = {Convergence of Probability Measures, Second Edition},
    publisher = {John Wiley \& Sons},
    year = {1999}
}

@book{Kallenberg2021book,
    author = {Olav Kallenberg},
    title = {Foundations of Modern Probability, Third Edition},
    publisher = {Springer},
    year = {2021}
}

@article{Andersen1949saj,
    author = {Erick Sparre Andersen},
    title = {On the number of positive sums of random variables},
    journal = {Scandinavian Actuarial Journal},
    year = {1949},
    volume = {1},
    pages = {27--36}
}

@book{Hildebrand1987book,
    author = {F. B. Hildebrand},
    title = {Introduction to Numerical Analysis: Second Edition},
    publisher = {Dover Publications},
    year = {1987}
}

@article{Dentcheva2020mp,
    author = {D. Dentcheva and A. Ruszczyński},
    title = {Risk forms: representation, disintegration, and application to partially observable two-stage systems},
    journal = {Mathematical Programming},
    year = {2020},
    volume = {181},
    pages = {297--317}
}

@article{Fan2022mp,
    author = {J. Fan and A. Ruszczyński},
    title = {Process-based risk measures and risk-averse control of discrete-time systems},
    journal = {Mathematical Programming},
    year = {2022},
    volume = {191},
    pages = {113--140}
}

@article{MinSK2022arXiv,
    author = {Seungki Min and Ciamac C. Moallemi and Costis Maglaras},
    title = {Risk-Sensitive Optimal Execution via a Conditional Value-at-Risk Objective},
    journal = {arXiv preprint arXiv:2201.11962},
    year = {2022}
}
\end{small}}

\end{document}